\newcommand{\kommentar}[1]{}
\newcommand\cube{\begin{tikzpicture}[scale=2.3]
    \coordinate (A1) at (0, 0);
    \coordinate (A2) at (0, 0.1);
    \coordinate (A3) at (0.1, 0.1);
    \coordinate (A4) at (0.1, 0);
    \coordinate (B1) at (0.03, 0.03);
    \coordinate (B2) at (0.03, 0.13);
    \coordinate (B3) at (0.13, 0.13);
    \coordinate (B4) at (0.13, 0.03);

    \draw (A1) -- (A2);
    \draw (A2) -- (A3);
    \draw (A3) -- (A4);
    \draw (A4) -- (A1);
    \draw[densely dotted] (A1) -- (B1);
    \draw[densely dotted] (B1) -- (B2);
    \draw (A2) -- (B2);
    \draw (B2) -- (B3);
    \draw (A3) -- (B3);
    \draw (A4) -- (B4);
    \draw (B4) -- (B3);
    \draw[densely dotted] (B1) -- (B4);
\end{tikzpicture}}
\newcommand\tinycube{\begin{tikzpicture}[scale=1.3]
    \coordinate (A1) at (0, 0);
    \coordinate (A2) at (0, 0.1);
    \coordinate (A3) at (0.1, 0.1);
    \coordinate (A4) at (0.1, 0);
    \coordinate (B1) at (0.03, 0.03);
    \coordinate (B2) at (0.03, 0.13);
    \coordinate (B3) at (0.13, 0.13);
    \coordinate (B4) at (0.13, 0.03);

    \draw (A1) -- (A2);
    \draw (A2) -- (A3);
    \draw (A3) -- (A4);
    \draw (A4) -- (A1);
    \draw[densely dotted] (A1) -- (B1);
    \draw[densely dotted] (B1) -- (B2);
    \draw (A2) -- (B2);
    \draw (B2) -- (B3);
    \draw (A3) -- (B3);
    \draw (A4) -- (B4);
    \draw (B4) -- (B3);
    \draw[densely dotted] (B1) -- (B4);
\end{tikzpicture}}
\newcommand{\ccom}[1]{{\color{red}{CD: #1}} }
\newcommand{\mcom}[1]{{\color{orange}{Matilde: #1}} }
\newcommand{\acom}[1]{{\color{blue}{Alexandra: #1}} }
\newcommand{\F}{\mathbb F}
\newcommand{\kk}{k}
\newcommand{\JJ}{J}
\newcommand{\el}{l}
\newcommand{\Q}{\mathbb Q}
\newcommand{\R}{\mathbb R}
\newcommand{\C}{\mathbb C}
\newcommand{\sumstar}{\sideset{}{^*}\sum}
\newcommand{\A}{R_1}
\newcommand{\B}{R_2}
\DeclareMathOperator{\tr}{tr}
\DeclareMathOperator{\re}{Re}
\DeclareMathOperator{\ord}{ord}
\renewcommand{\pmod}[1]{\,(\mathrm{mod}\,#1)}
\newtheorem{lem}{Lemma}[section]
\newtheorem{prop}[lem]{Proposition}
\newtheorem{thm}[lem]{Theorem}
\newtheorem{cor}[lem]{Corollary}
\newtheorem{conj}[lem]{Conjecture}
\theoremstyle{definition}
\newtheorem{rem}[lem]{Remark}
\begin{document}

\author{Chantal David}
\address{Department of Mathematics and Statistics, Concordia University, 1455 de Maisonneuve West, Montr\'eal, Qu\'ebec, Canada H3G 1M8}
\email{chantal.david@concordia.ca}

\author{Alexandra Florea}
\address{Columbia University, Mathematics Department, Rm 606, MC 4417, 2990 Broadway, New York NY 10027, USA}
\email{aflorea@math.columbia.edu}

\author{Matilde Lalin}
\address{Universit\'e de Montr\'eal, Pavillon Andr\'e-Aisenstadt, D\'epartement de math\'ematiques et de statistique, CP 6128, succ. Centre-ville, Montr\'eal, Qu\'ebec, Canada H3C 3J7}
\email{mlalin@dms.umontreal.ca}

\title{Non-vanishing for cubic $L$--functions}

\begin{abstract}
We prove that there is a positive proportion of $L$-functions associated to cubic characters over $\F_q[T]$  that do not vanish at the critical point $s=1/2$. This is achieved by computing the first mollified moment 
using techniques previously developed by the authors in their work on the first moment of cubic $L$--functions, \kommentar{\acom{I'd rephrase: [...] by computing the first mollified moment using techniques previously developed by the authors in their work on the first moment of cubic $L$--functions[...]},\mcom{Changed} }and by obtaining a sharp upper  bound for the second mollified moment, building on work of Lester--Radziwi\l\l, which in turn develops further ideas from the work of Soundararajan, Harper, and Radziwi\l\l--Soundararajan. We work in the non-Kummer setting when $q\equiv 2 \pmod{3}$, but our results could be translated into the Kummer setting when $q\equiv 1\pmod{3}$ as well as into the number field case (assuming the Generalized Riemann Hypothesis).  Our positive proportion of non-vanishing is explicit, but extremely small, due to the fact that the implied constant in the upper bound for the mollified second moment is very large.

 \kommentar{\acom{Should we also add something like: Our positive proportion of non-vanishing is explicit, but extremely small, due to the fact that the implied constant in the upper bound for the mollified second moment is very large?}\mcom{Added.}}
\end{abstract}

\subjclass[2010]{11M06, 11M38, 11R16, 11R58}
\keywords{Moments over function fields, cubic twists, non-vanishing.}

\maketitle

\section{Introduction}
A famous conjecture of Chowla predicts that ${\textstyle L(\frac{1}{2}, \chi)} \neq 0$ for Dirichlet $L$-functions attached to  primitive characters $\chi$. It was first conjectured when $\chi$ is a quadratic character, which is the most studied case. For quadratic Dirichlet $L$-functions, 
\"Ozl\"uk and Snyder \cite{OS} showed, under the Generalized Riemann Hypothesis (GRH), that at least $15/16$ of the $L$-functions ${\textstyle L(\frac{1}{2}, \chi)} $ attached to quadratic characters $\chi$ do not vanish, by computing the one-level density for the low-lying zeroes in the family. The conjectures of Katz and Sarnak \cite{KS} on the zeroes of $L$-functions imply 
that ${\textstyle L(\frac{1}{2}, \chi)} \neq 0$ for almost all quadratic Dirichlet $L$-functions. 
Without assuming GRH, Soundararajan \cite{Sound} proved that at least $87.5 \%$ of the quadratic Dirichlet $L$-functions
do not vanish at $s=1/2$, by computing the first two {\it mollified} moments. It is well-known that using the first two (non-mollified) moments does not lead to a positive proportion of non-vanishing, as they grow too fast (see Conjecture $1.5.3$ in \cite{CFKRS} and the work of Jutila \cite{Jutila}.) Soundararajan \cite{Sound} also computed asymptotics for the first three moments, and Shen \cite{shen} obtained an asymptotic formula with the leading order term for the fourth moment, building on work of Soundararajan and Young \cite{SY}. 
A different approach was used by Diaconu, Goldfeld, and Hoffstein \cite{DGH} to compute the third moment.  Over function fields, asymptotics for the first four moments were obtained by Florea \cite{Florea1, Florea2, Florea3}. We refer the reader to those papers for more details.
Moreover, in the function field case, Bui and Florea \cite{Bui-Florea} obtained a proportion of non-vanishing of  at least $94 \%$ for quadratic Dirichlet $L$-functions, by computing the one-level density (those results are unconditional, as GRH is true over function fields).

In this paper, we consider the case of cubic Dirichlet $L$-functions. There are few articles in the literature about cubic Dirichlet $L$-functions, compared to the  abundance of papers on quadratic Dirichlet $L$-functions, as this family is more difficult, in part because of the presence of cubic Gauss sums.
The first moment of ${\textstyle L(\frac{1}{2}, \chi)} $, where $\chi$ is a primitive cubic character, was computed by Baier and Young over $\Q$ \cite{BY} (the non-Kummer case), by Luo for a thin sub-family over $\Q(\sqrt{-3})$ \cite{Luo} (the Kummer case), and by David, Florea, and Lalin  \cite{DFL}{ over function fields, in both the Kummer and the non-Kummer case, and for the full families.  \kommentar{\ccom{I added some words about the thin subfamily.} \acom{Not really relevant here, but is the second moment for the thin subfamily known or doable?}
\ccom{Very good question, but I would have to try it to know. The second moment, or the first moment in absolute value would be. nice too. For the one-level density, we can get support past (-1,1) for the thin-subfamily under GRH. Does this mean that morally we get the second moment?} \acom{I'd guess so. }}

In these three papers, the authors obtained lower bounds for the number of non-vanishing cubic twists, but not  positive proportions, by using upper bounds on higher moments. 
Ellenberg, Li, and Shusterman \cite{ELS} use algebraic geometry techniques to extend the results of \cite{DFL} to $\ell$-twists over function fields and  improve upon the lower bound for the number of non-vanishing cubic twists (but the proportion is still nonpositive). Obtaining an asymptotic for the second moment for cubic Dirichlet $L$-functions is still an open question, over functions fields or number fields. 
Moreover, for the case of cubic Dirichlet $L$-functions, computing the one-level density can only be done for limited support of the Fourier transform of the test function,
and that is not enough to lead to a positive proportion of non-vanishing, even under GRH \cite{Cho-Park, Meisner}.

We prove in this paper that there is a positive proportion of non-vanishing for cubic Dirichlet $L$-functions at $s=1/2$ over function fields, in the non-Kummer case. 
\begin{thm}\label{positive-prop}
 Let $q \equiv 2 \mod 3$. Let $\mathcal{C}(g)$ be the set of primitive cubic Dirichlet characters of genus $g$ over $\F_q[T]$. Then, as $g \rightarrow \infty$,
 $$\# \left\{ \chi \in \mathcal{C}(g) \;:\: {\textstyle L(\frac{1}{2}, \chi)} \neq 0 \right\} \gg \# \mathcal{C}(g).$$
 \end{thm}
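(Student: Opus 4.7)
The plan is to apply the classical mollified moments method. Choose a mollifier
$$M(\chi) = \sum_{\substack{f \text{ monic} \\ \deg f \leq \theta g}} \frac{a(f)\,\chi(f)}{|f|^{1/2}},$$
supported on monic squarefree cube-free $f$ of degree at most $\theta g$ for some small $\theta>0$, with coefficients $a(f)$ chosen as a M\"obius-like truncation of the Dirichlet series of $L(\tfrac12,\chi)^{-1}$. By the Cauchy--Schwarz inequality,
$$\left|\sum_{\chi \in \mathcal C(g)} L(\tfrac12,\chi)\,M(\chi)\right|^2 \;\leq\; \#\bigl\{\chi \in \mathcal C(g):\,L(\tfrac12,\chi)\neq 0\bigr\} \cdot \sum_{\chi \in \mathcal C(g)} |L(\tfrac12,\chi)\,M(\chi)|^2,$$
so the theorem follows from the two bounds $\sum_\chi L(\tfrac12,\chi)\,M(\chi) \gg \#\mathcal C(g)$ and $\sum_\chi |L(\tfrac12,\chi)M(\chi)|^2 \ll \#\mathcal C(g)$, provided $\theta$ is sufficiently small.

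For the first mollified moment, the plan is to follow the strategy from \cite{DFL}. The approximate functional equation writes $L(\tfrac12,\chi)$ as a truncated sum $\sum_f \chi(f)/|f|^{1/2}$ plus a dual contribution of the same shape; multiplying by $M(\chi)$ and swapping the order of summation yields a double sum over $(f,h)$ of the averages $\sum_{\chi \in \mathcal C(g)} \chi(fh)$. The diagonal (cube) terms produce the expected main term of size $\#\mathcal C(g)$. The off-diagonal terms, after the Poisson-type summation for cubic characters, reduce to sums involving cubic Gauss sums $g_3$, and the non-Kummer hypothesis $q\equiv 2 \pmod 3$ makes these lower order, exactly as in the computation of the first moment in \cite{DFL}. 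The only new feature is the extra summation over $h$ from the mollifier, which is well-behaved because $\deg h \leq \theta g$.

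The main obstacle is the matching $O(\#\mathcal C(g))$ upper bound for the mollified second moment, since even the unmollified second moment of cubic $L$-functions is still an open problem. The plan is to adapt the Lester--Radziwi\l\l\ framework, itself a refinement of the Soundararajan--Harper and Radziwi\l\l--Soundararajan methods. The central input is the function-field version of Soundararajan's estimate
$$\log |L(\tfrac12,\chi)| \;\leq\; \re \sum_{\deg p \leq x} \frac{\chi(p)}{|p|^{1/2}} \;+\; \re \sum_{\deg p \leq x/2}\frac{\chi(p)^2}{2|p|} \;+\; O\!\left(\frac{g}{x}\right),$$
which is available unconditionally since GRH is a theorem over $\F_q[T]$. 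Partition $\mathcal C(g)$ according to the dyadic sizes of the short prime Dirichlet polynomials $\mathcal P_j(\chi) = \sum_{\deg p \in I_j} \chi(p)/|p|^{1/2}$ on a nested sequence of intervals $I_j$. On the typical set where every $|\mathcal P_j(\chi)|$ is small, $|L(\tfrac12,\chi)|^2$ is dominated by a short Dirichlet polynomial $|\mathcal D(\chi)|^2$, and the contribution to the mollified second moment reduces, after expansion, to averages of $\chi(F)\overline{\chi}(G)$ for $F,G$ of small degree, which are evaluated with the cubic orthogonality tools of \cite{DFL}. The atypical sets, where some $\mathcal P_j(\chi)$ is large, are controlled by taking high moments of $\mathcal P_j(\chi)M(\chi)$, again reducible to the same type of character sum averages. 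The delicate book-keeping is to make sure that the total degree $\deg(F) + \deg(G)$ and the combined length of $\mathcal D$ and $M$ stay well below $g$, so that no logarithmic loss appears; this is what forces $\theta$ to be very small and is the source of the tiny, though explicit, positive proportion noted in the abstract.

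Once these two estimates are in place, Cauchy--Schwarz applied in the display above yields $\#\{\chi \in \mathcal C(g) : L(\tfrac12,\chi)\neq 0\} \gg \#\mathcal C(g)$, proving the theorem.
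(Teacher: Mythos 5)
Your proposal follows essentially the same route as the paper: an asymptotic for the first mollified moment via the approximate functional equation and the cubic Gauss sum machinery of \cite{DFL}, a sharp upper bound for the mollified second moment via the Soundararajan--Harper--Lester--Radziwi\l\l\ method (using the short-Dirichlet-polynomial bound for $\log|L(\frac12,\chi)|$, a partition into prime intervals, and separate treatment of typical and atypical characters), and a final application of Cauchy--Schwarz. The only cosmetic difference is the form of the mollifier (a single M\"obius-type truncation rather than the product of truncated exponentials over the intervals $I_j$), which does not change the substance of the argument.
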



Theorem \ref{positive-prop} is obtained  by using the breakthrough work on sharp upper bounds for {moments} of $|\zeta(1/2+it)|$ by Soundararajan \cite{Sound-again} and Harper \cite{Harper}, under GRH. Their techniques, together with ideas appearing in the work of Radziwi\l\l~ and Soundararajan \cite{R-S} on distributions of central $L$-values of quadratic twists of elliptic curves, were further developed by Lester and Radziwi\l\l~ in  \cite{L-R}, where they obtained sharp upper bounds for mollified moments of quadratic twists of modular forms. Our work owes a lot to these papers and circles of ideas. 

To obtain Theorem  \ref{positive-prop}, we need to compute the first mollified moment, generalizing our previous work \cite{DFL} (Theorem \ref{first-moment}), and to obtain a sharp upper bound for the second mollified moment (Theorem \ref{second-moment}). In fact,  we obtain upper bounds for all mollified moments, not only the second moment and integral moments. Using Theorems \ref{first-moment} and \ref{second-moment}, the positive proportion of Theorem \ref{positive-prop} follows from a simple application of the Cauchy--Schwarz inequality.

As noted by Harper in the case of the Riemann zeta function, \kommentar{\acom{in the case of the Riemann-zeta function}\mcom{Added}} the sharp upper bound for the $\kk^{\text{th}}$ moment is obtained at the cost of an enormous constant of the order $e^{e^{c \kk}}$, for some absolute constant $c > 0$. Hence our positive proportion of non-vanishing is extremely small, but explicit nonetheless. 

The method we used for the family of cubic $L$--functions would be expected to work in general for families where one can compute the first moment with a power saving error term, and it is useful in families where the second moment is not known. The method used in the paper allows to get a sharp upper bound for the second mollified moment, which is enough to obtain a positive proportion of non-vanishing (under GRH). For the family of cubic twists, we expect that the Kummer case would be similar, and the results would hold in that setting as well.
Our result should also transfer over to number fields, but it would be conditional on GRH.
\kommentar{\acom{Here maybe we can mention that the method works in general for families where we know how to compute the first moment with a power saving error term, and it is most useful of course for families where we don't know the second moment. Do you know other examples where we can compute the first moment with a power saving ET, but we don't know the second moment?}
\ccom{Is that OK? I moved the 2 lines about Kummer and number fields which were above.}}

We first state the standard conjecture for moments of the family of cubic Dirichlet $L$-functions.  We refer the reader to Section \ref{function-fields} for more information about the family of cubic Dirichlet $L$-functions over function fields in the non-Kummer case (i.e. $q \equiv 2 \pmod 3$).

\begin{conj}  \label{conj-moments}
 Let $q \equiv 2 \pmod 3$. Let $\mathcal{C}(g)$ be the set of primitive cubic Dirichlet characters of genus $g$ over $\F_q[T]$. Then as $g \rightarrow \infty$, 
$$ \frac{1}{\# \mathcal{C}(g)} 
\sum_{\chi \in {\mathcal{C}}(g)} {\textstyle|L(\frac{1}{2},\chi) |^{2\kk}}  \sim \frac{{a_\kk \mathfrak{g}_\kk}}{(\kk^2)!} P_\kk(g),$$ 
where $P_\kk(g)$ is a monic polynomial of degree $\kk^2$, $a_\kk$ is an arithmetic factor depending on the family, and 
$$\mathfrak{g}_\kk= (\kk^2)! \prod_{j=0}^{\kk-1}\frac{j!}{(j+\kk)!}.$$
\end{conj}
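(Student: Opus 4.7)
The plan is to derive this prediction via the CFKRS heuristic recipe adapted to the family of cubic Dirichlet characters over $\F_q[T]$, combined with the Keating--Snaith formula from random matrix theory for the leading constant. The cubic family is unitary (the characters are not self-dual), so the expected symmetry type is $U(N)$ with $N$ identified with the genus $g$; this accounts both for the polynomial $P_\kk(g)$ having degree $\kk^2$ and for the appearance of the Keating--Snaith factor $\mathfrak{g}_\kk/(\kk^2)!$.

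More concretely, I would introduce the shifted moment
$$\frac{1}{\#\mathcal{C}(g)}\sum_{\chi \in \mathcal{C}(g)} \prod_{i=1}^{\kk} L\!\left(\tfrac{1}{2}+\alpha_i,\chi\right) L\!\left(\tfrac{1}{2}+\beta_i,\overline{\chi}\right)$$
for small complex shifts $\alpha_i, \beta_i$, expand each $L$-factor by the approximate functional equation, and average over $\chi \in \mathcal{C}(g)$. Restricted to the diagonal range, the resulting sum of products of character values factorizes as an Euler product over monic irreducibles $P \in \F_q[T]$, whose local factors $a_P(\alpha,\beta)$ assemble at $\alpha=\beta=0$ into the arithmetic factor $a_\kk$.

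Following CFKRS, I would then symmetrize by summing over the $\binom{2\kk}{\kk}$ configurations obtained by swapping subsets of the $\alpha_i$ with the $-\beta_j$ (reflecting the functional equation of $L(s,\chi)L(1-s,\overline{\chi})$), and take the limit $\alpha,\beta \to 0$. The poles of the zeta factors cancel inside the symmetrized sum and leave a polynomial of degree $\kk^2$ in $g$; matching its leading coefficient with the Keating--Snaith integral over $U(g)$ produces the constant $\mathfrak{g}_\kk/(\kk^2)!$, while the subleading coefficients of $P_\kk(g)$ are in principle computable from the expansion of the local Euler factors around the shift origin.

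The main obstacle is that the CFKRS recipe itself is unproven for $\kk \geq 2$: the a priori off-diagonal contributions to the shifted moment are of the same order of magnitude as the conjectured main term, and no rigorous mechanism is known that would establish their cancellation for cubic twists, largely due to the presence of cubic Gauss sums which obstruct standard Poisson summation arguments. For this reason the present paper does not attempt to prove Conjecture \ref{conj-moments}, but instead uses the known first moment together with a sharp upper bound on the second mollified moment to extract non-vanishing via Cauchy--Schwarz.
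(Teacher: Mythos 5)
This statement is a conjecture, and the paper offers no proof of it: it only attributes the shape of the prediction to the Keating--Snaith random matrix constants and the CFKRS heuristic recipe, exactly the two ingredients you describe. Your sketch of the shifted-moment/unitary-symmetry heuristic, together with your explicit acknowledgement that the off-diagonal contributions cannot currently be controlled (so nothing here is actually proved, and the paper instead proceeds via the mollified first moment and a sharp second-moment upper bound), is faithful to the paper's framing and requires no correction.
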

A testament to the fact that moments of $L$-functions are hard to compute is the fact that simply conjecturing an asymptotic is very difficult. The constants $\mathfrak{g}_\kk$ were obtained by Keating and Snaith based on considerations from random matrix theory \cite{Keating-Snaith}. Number theoretic heuristic arguments were used in the work of Conrey, Farmer, Keating, Rubinstein, and Snaith \cite{CFKRS} to generalize Conjecture \ref{conj-moments} to include lower order terms, and more recently by Conrey and Keating \cite{CK1,CK2,CK3,CK4,CK5}. 
The order of magnitude $g^{\kk^2}$ is easy to conjecture, as it comes from the size  of the contribution of the {\it diagonal terms}. In the case of cubic characters, this will come from the fact that cubic characters are trivial on cubes. For the first moment, only diagonal terms contribute to the asymptotic of the previously cited work \cite{BY, Luo, DFL}. For the second (and higher) moments, there will be a contribution from the  {\it off-diagonal terms}. The off-diagonal term contribution can be estimated in the case of quadratic characters, but it is open for the family of cubic characters, where only the first moment without absolute value (the sum of 
${\textstyle L(\frac{1}{2},\chi)}$) is known. In the work of Soundararajan and Harper,  which provides an upper bound of the exact order of magnitude for all moments of $\zeta(s)$, the upper bound is built 
only from the contribution of the diagonal terms. This follows from a key result of Soundararajan who proved that one can upper bound $\log 
 |L(\frac{1}{2},\chi) |$ by a short sum over primes. In our setting, we use Lemma  \ref{like-chandee} which is the analogue of Soundararajan's key inequality. Considering only the diagonal terms and neglecting the rest leads to a constant for the upper bound which is much larger than $\mathfrak{g}_\kk$.  In particular, shortening the Dirichlet polynomial produces a large contribution from the $(g+2)/N$ term.  The techniques used to get the upper bound generate a constant of size $e^{e^{ck}}$, as noted by Harper \cite{Harper}. 
 
\subsection{Statement of the results}

We state our two results about the mollified moments. Let $\kappa>0$. The mollifier we use,  
$M(\chi; \frac{1}{\kappa})$, is defined in Section \ref{mollifier}, and depends on the parameter $\kappa$. We will later choose $\kappa=1$ in the application to Theorem \ref{first-moment}.

\begin{thm} \label{first-moment}  Let $q \equiv 2 \pmod 3$. Let $\mathcal{C}(g)$ be the set of primitive cubic Dirichlet characters of genus $g$ over $\F_q[T]$. Then as $g \rightarrow \infty$, 
$$ 
\sum_{\chi \in \mathcal{C}(g)} 
{\textstyle L(\frac{1}{2},\chi) }{\textstyle M(\chi;1)}  = A q^{g+2}+ O (q^{\delta g} ),$$ for some $0<\delta<1$ (see equation \eqref{et_moll} for more details on $\delta$) and where the constant $A$ is given in equation \eqref{arghh}.
\end{thm}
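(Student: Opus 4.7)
The plan is to reduce the mollified first moment to a weighted sum of \emph{twisted} first moments, each of which can be handled by a direct generalization of the techniques from \cite{DFL}. Writing the mollifier as a short Dirichlet polynomial
$$M(\chi;1)=\sum_{\substack{n\in\F_q[T]\\ \deg n\leq M}}\frac{a(n)\,\chi(n)}{|n|^{1/2}},$$
where the coefficients $a(n)$ are those specified in Section~\ref{mollifier} and $M=\theta g$ for a small parameter $\theta>0$ to be fixed later, and swapping the order of summation, the left-hand side of Theorem~\ref{first-moment} becomes
$$\sum_{n}\frac{a(n)}{|n|^{1/2}}\sum_{\chi\in\mathcal{C}(g)}{\textstyle L(\frac{1}{2},\chi)}\,\chi(n).$$

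I would then apply the approximate functional equation for $L(1/2,\chi)$ over $\F_q[T]$ to replace the central value by two finite character sums of length roughly $q^{g/2}$, and interchange the order of summation once more. The resulting inner sum over $\chi\in\mathcal{C}(g)$ is an orthogonality-type sum over cubic characters evaluated at $nm$. As in \cite{DFL}, this splits into a \emph{diagonal} contribution (where $nm$ is a cube in $\F_q[T]$, up to units) and an \emph{off-diagonal} contribution expressible in terms of cubic Gauss sums. Identifying the polar contribution of the relevant Dirichlet series on the diagonal side and summing over $n$ against the weights $a(n)/|n|^{1/2}$ produces the main term $A\,q^{g+2}$, with the constant $A$ given by~\eqref{arghh}.

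The off-diagonal contributions are controlled using the estimates on short sums of cubic Gauss sums established in \cite{DFL}, which produce a saving of the form $q^{\delta_{0} g}$ for some $\delta_{0}<1$, with at worst polynomial dependence on $|n|$. Summed against the mollifier, this gives a total error of order $q^{\beta M+\delta_{0} g}$ for some fixed $\beta\geq 0$; choosing $\theta$ small enough that $\beta\theta+\delta_{0}<1$ then delivers the error bound $O(q^{\delta g})$ with $\delta$ as in~\eqref{et_moll}.

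The main obstacle is verifying that the off-diagonal analysis of \cite{DFL} goes through \emph{uniformly} in the twist variable $n$ introduced by the mollifier, with at worst polynomial loss in $|n|$. Concretely, one must check that the cubic Poisson summation and the large-sieve-type bounds for sums of cubic Gauss sums used in \cite{DFL} still produce a power saving when the extra factor $\chi(n)$ is present. Once this uniformity is established, the extraction of the leading constant $A$ and the optimization of the mollifier length $M$ are essentially bookkeeping.
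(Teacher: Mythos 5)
Your plan is essentially the paper's own proof: the mollified moment is expanded into twisted first moments $\sum_{\chi}\chi(h)L(\frac12,\chi)$, and the uniformity "obstacle" you flag is exactly what the paper establishes in Proposition \ref{twist}, by running the argument of \cite{DFL} (approximate functional equation, cube/diagonal terms for the main term, Lindel\"of bounds for the non-cube terms, and the cubic Gauss sum distribution results for the dual term) with explicit polynomial dependence on the twist, giving an error $O(q^{7g/8+\deg(h)/4+\varepsilon g})$. Summing against the mollifier coefficients, whose support satisfies $\sum_j\theta_j\ell_j\le 1/20$, then yields the main term \eqref{arghh} and the power-saving error \eqref{et_moll}, just as you describe.
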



\kommentar{\acom{Here I don't know if we want to write the error term in terms of $\theta_j$ and $\ell_j$. Maybe we could write the error term as $o(q^g)$, and at the end of the proof explain why that is $o(q^g)$. Also, we have the corollary below on the lower bound for the first moment. I don't know if we want to write it like that or as a remark.}
\ccom{Agree, I think we can do without the sum $\sum_{j=0}^{\JJ} \ell_j \theta_j$ in this statement. We can write $\sim$, or $O(q^{\delta g})$ for some $\delta<1$ if we want to stress that we have a power saving. And we can refer to the last section, page 50.}\mcom{I agree}}
\begin{cor}
\label{lb}
With the same notation as before, we have that
$$ \sum_{\chi \in \mathcal{C}(g)} 
{\textstyle L(\frac{1}{2},\chi) }{\textstyle M(\chi;1)} \geq 0.6143 q^{g+2}.$$
\end{cor}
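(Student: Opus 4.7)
The strategy is direct: apply Theorem \ref{first-moment} with $\kappa=1$, which gives
$$\sum_{\chi \in \mathcal{C}(g)} L(1/2,\chi) M(\chi;1) = A q^{g+2} + O(q^{\delta g})$$
with $\delta < 1$. Since $q^{\delta g} = o(q^{g+2})$ as $g \to \infty$, it suffices to show that the main-term constant $A$ from equation \eqref{arghh} satisfies $A \geq 0.6143$, with a little slack to absorb the error term once $g$ is large enough.

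The plan for bounding $A$ is then to evaluate it numerically from its explicit form. On the model of the analogous constant appearing in \cite{DFL}, $A$ is expected to be an absolutely convergent Euler product over monic irreducible polynomials $P \in \F_q[T]$, with local factors $L_P$ reflecting both the density of cubes modulo $P$ and the local data of the mollifier at $\kappa=1$. I would split $\log A = \sum_P \log L_P$ into a head over primes of degree at most some cutoff $N$, which can be computed explicitly, and a tail over primes of degree $> N$. The tail is controlled via the prime polynomial theorem $\#\{P : \deg P = d\} = q^d/d + O(q^{d/2}/d)$, combined with the bound $\log L_P = O(|P|^{-1})$ coming from the Taylor expansion of each local factor; this yields an absolute tail bound that decays geometrically in $q^{-N}$.

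The main obstacle is two-fold. First, $N$ must be chosen large enough that the rigorous tail bound, combined with the truncated numerical computation of the head, leaves sufficient margin to cross the threshold $0.6143$. Second, since $A$ a priori depends on $q$, uniformity must be established: either by showing that $A$ is monotone (or otherwise uniformly controllable) in $q$ over $q \equiv 2 \pmod{3}$, or by treating small values of $q$ separately and using a soft comparison as $q \to \infty$, where the Euler product tends to a finite limit governed by the average local factor. Once these numerical and uniformity estimates are in place, the corollary follows immediately from Theorem \ref{first-moment}, with no further ingredients beyond the bookkeeping of the absolute constants.
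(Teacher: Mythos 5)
Your reduction to ``bound the constant $A$ from below'' is the right starting point, but the plan has a genuine gap: you treat $A$ as a fixed, $g$-independent, absolutely convergent Euler product that can be evaluated numerically with a head-plus-tail truncation. It is not. By \eqref{arghh}, $A$ contains the mollifier contribution $\prod_{r=0}^{\JJ} T(r)$, where each $T(r)$ is a sum restricted by $\Omega(h_r)\leq \ell_r$ and weighted by the $g$-dependent coefficients $a(h_r;\JJ)$, with $\JJ$, $\theta_r$, $\ell_r$ all depending on $g$. So before any numerics one must produce a lower bound for $\prod_r T(r)$ that is uniform in $g$; this is exactly the nontrivial content of the paper's argument, and your proposal does not say how to do it. The paper first drops the truncations $\Omega(h_r)\leq\ell_r$ at the cost of subtracting a tail bounded by $2^{-\ell_r}$ times an explicit Euler product (this is where the harmless factor $1-e^{-e^{84}}$ in \eqref{mtmol} comes from), and only then is left with genuine local factors that can be bounded.

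Even at that stage the paper does not compute anything numerically prime by prime: since the linear terms in $1/\sqrt{|P|}$ cancel between the $L$-function and the mollifier at $\kappa=1$, each local factor of $\mathcal{U}$ deviates from $1$ only at order $|P|^{-3/2}$ (not $|P|^{-1}$ as you assert), so $\mathcal{U}\geq \zeta_q(3/2)^{-1}$, which cancels the $\zeta_q(3/2)$ in \eqref{arghh}; similarly an elementary inequality on the even-degree factors gives $\mathcal{A}_{\mathrm{nK}}(1/q^2,1/q^{3/2})\geq \zeta_q(2)^{-2}$. The main term is then at least $\bigl(1-e^{-e^{84}}\bigr)q^{g+2}/\bigl(\zeta_q(2)^2\zeta_q(3)\bigr)=\bigl(1-e^{-e^{84}}\bigr)(1-1/q)^2(1-1/q^2)\,q^{g+2}$, which is increasing in $q$ and equals $0.6144\ldots$ at the worst case $q=5$; this settles your $q$-uniformity concern without any case analysis or limiting argument. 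So the comparison-with-zeta route both supplies the missing $g$-uniform control of the mollifier factors and makes the truncated-Euler-product numerics you propose unnecessary.
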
 
\begin{rem} It is easy to estimate that $\# \mathcal{C}(g) \sim c_3 q^{g+2}$ for some explicit constant $c_3$ (see \cite{DFL}.) Dividing by the size of the family, we then prove that the first mollified moment of $L(\frac{1}{2}, \chi)$ is asymptotic to a constant, which is the conjectural asymptotic. 
This is also the asymptotic for the non-mollified moment (with a different constant) as proven in \cite{DFL}.
This asymptotic is not included in Conjecture \ref{conj-moments} which is concerned with the moments of the {absolute value} of the $L$-functions. The moments of
$L(\frac{1}{2}, \chi)^{\kk_1}  {\overline{L(\frac{1}{2}, \chi)^{\kk_2}}}$, for general positive $k_1, k_2$,  are conjectured to grow as a polynomial of degree $\kk_1 \kk_2$ in $g$, see \cite{DLN}. Note that the conjectures in \cite{DLN} hold for cubic twists of elliptic curves, but both families have the same symmetry, so the main terms will have a similar shape. 
Theorem \ref{first-moment} corresponds to the case $\kk_1=1, \kk_2=0$, and Conjecture \ref{conj-moments} to the case $\kk_1=\kk_2=\kk$.
\end{rem}

The following upper bound for the second moment is the analogue of Proposition 4.1 in \cite{L-R}.
\begin{thm}\label{moll_ub} \label{second-moment} Let $\kk, \kappa > 0$ such that $\kk \kappa $ is an even integer and $\kk \kappa \leq C$ for some absolute constant $C$.  
Let $q \equiv 2 \pmod 3$. Let $\mathcal{C}(g)$ be the set of primitive cubic Dirichlet characters of genus $g$ over $\F_q[T]$. Then as $g \rightarrow \infty$, 
$$
\sum_{\chi \in \mathcal{C}(g)}  {\textstyle|L(\frac{1}{2},\chi) |^{ \kk} | \; M(\chi;\frac{1}{\kappa})|^{ \kk \kappa }} \ll q^g.$$ \label{ub}
\end{thm}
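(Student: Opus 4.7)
The plan is to adapt the Soundararajan--Harper upper bound machinery, as refined by Lester and Radziwi\l\l{} for mollified quadratic twist families, to the cubic family over $\F_q[T]$. Since the mollifier $M(\chi;1/\kappa)$ is a truncated Dirichlet polynomial approximating $L(1/2,\chi)^{-1/\kappa}$, the quantity $|L(1/2,\chi)|^{\kk}|M(\chi;1/\kappa)|^{\kk\kappa}$ is heuristically of size $O(1)$ and summing over $\mathcal{C}(g)$ should give $\ll q^g$. The work is to make this precise while keeping the Dirichlet polynomials that arise short enough for the orthogonality estimates underlying Theorem \ref{first-moment} to apply.

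I would begin by invoking Lemma \ref{like-chandee} (the function-field analogue of Soundararajan's inequality) to obtain a pointwise bound of the form
$$\log|L(1/2,\chi)| \le \re \sum_{\deg P \le N} \frac{\chi(P)\, w(\deg P)}{|P|^{1/2+\sigma_0}} + O\!\left(\frac{g}{N}\right),$$
for suitable smoothing weight $w$ and parameters $\sigma_0, N$. Exponentiating, $|L(1/2,\chi)|^{\kk}$ is majorized by $\exp(\kk\,\mathcal{P}(\chi))\cdot e^{O(\kk g/N)}$, where $\mathcal{P}(\chi)$ is a short prime sum. Following Harper, I would partition the prime range into dyadic blocks $I_j = (N_{j-1}, N_j]$, set $\mathcal{P}_j(\chi)=\sum_{\deg P\in I_j}(\cdots)$, and fix thresholds $V_j$ growing slowly with $j$. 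Each $\chi$ is classified by the smallest index $j$ (if any) for which $|\mathcal{P}_j(\chi)|>V_j$, yielding a good set $\mathcal{S}$ and exceptional sets $\mathcal{T}_j$.

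On each $\mathcal{T}_j$, high moments of $\mathcal{P}_j$, computed by orthogonality of cubic characters on the short block $I_j$, will show that $\#\mathcal{T}_j$ is tiny enough to absorb a trivial convexity bound for $|L\cdot M|^{\kk}$. On $\mathcal{S}$, I would replace $\exp(\kk\mathcal{P}_j(\chi))$ by a Taylor truncation of length $\ll e^{cV_j}$, turning the product over $j$ into a Dirichlet polynomial whose total degree is below $\delta g$ for any prescribed $\delta>0$. Multiplying by $|M(\chi;1/\kappa)|^{\kk\kappa}$, whose logarithm approximates $-\kk\,\mathcal{P}(\chi)$, cancels the leading part of the exponential, and after expansion the remaining quantity to estimate is a linear combination of
$$\sum_{\chi\in\mathcal{C}(g)} \chi(F_1)\overline{\chi(F_2)},$$
with $F_1,F_2$ monic of total degree less than $(1-\delta)g$. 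Invoking the orthogonality input driving Theorem \ref{first-moment}, the diagonal contribution (where $F_1\overline{F_2}$ is a cube in the appropriate sense) furnishes the main term of size $q^g$ times a convergent arithmetic factor, while the off-diagonal contribution is $O(q^{\delta g})$ thanks to the power-saving error term in that theorem.

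The hard part will be balancing the parameters $N$, the block endpoints $N_j$, the thresholds $V_j$, and the Taylor truncation lengths so that three competing constraints hold simultaneously: the approximating Dirichlet polynomial must have degree below the range where the off-diagonal cubic character sums cease to save; the mollifier must cancel $\exp(\kk\mathcal{P}(\chi))$ closely enough that the diagonal contribution remains $O(1)$ per character; and the exceptional sets $\mathcal{T}_j$ must be small enough to neutralize a crude pointwise bound on $|L\cdot M|^{\kk}$. As in Harper's original argument, these requirements are only barely compatible and force the implied constant in the final inequality to grow like $e^{e^{c\kk\kappa}}$, which is precisely the source of the extremely small but positive proportion asserted in Theorem \ref{positive-prop}.
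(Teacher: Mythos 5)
Your overall strategy---Lemma \ref{like-chandee}, a Harper-style partition of the primes into blocks with slowly growing thresholds, Taylor truncation of the exponentials on the good set, and mollifier cancellation on the diagonal---is the same as the paper's, but two of your key steps fail as stated. The first concerns the exceptional sets. The thresholds in this method are necessarily small (here $\ell_j\asymp\theta_j^{-b}$), so the sets where a block is large are only sub-exponentially rare: for the worst case (block $I_0$) the saving obtained by inserting $\bigl(\kk e^2\Re P_{I_0}/\ell_0\bigr)^{2s_0}$ with $s_0\ll 1/\theta_0$ is of size $q^{-(\log g)^{\delta}}$, nothing like $q^{-cg}$. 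A ``trivial convexity bound'' for $|L(\frac12,\chi)|^{\kk}|M(\chi;\frac1\kappa)|^{\kk\kappa}$ is of size $q^{\varepsilon g}$ times the trivial (exponential in $g$) bound for the mollifier, so rarity times trivial bound diverges and your absorption argument collapses. The paper instead (i) handles $\chi\notin\mathcal{T}_0$ by Cauchy--Schwarz against the almost-sharp unmollified bound $\sum_\chi|L(\frac12,\chi)|^{2\kk}\ll q^{g+2}g^{\kk^2+\varepsilon}$ of Proposition \ref{lemma_sound}, a separate Soundararajan-type large-deviation argument you have not supplied, and (ii) never counts the intermediate exceptional sets at all: on the set which is good up to block $j$ and bad at $j+1$ it applies the basic inequality with $N=(g+2)\theta_j$, so the $L$-value is bounded by $D_{j,\kk}S_{j,\kk}$ times the rarity factor $\bigl(e^2\kk\Re P_{I_{j+1}}/\ell_{j+1}\bigr)^{s_{j+1}}$, and everything is averaged simultaneously (Proposition \ref{prop-cases}).

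The second gap is the off-diagonal. You allow the expanded Dirichlet polynomial to reach total degree $(1-\delta)g$ and claim the non-diagonal character sums are $O(q^{\delta g})$ ``thanks to the power-saving error term in Theorem \ref{first-moment}''. That theorem estimates the twisted first moment $\sum_\chi\chi(h)L(\frac12,\chi)$ for $\deg h<g/10$; it says nothing about pure sums $\sum_\chi\chi(F_1)\overline{\chi}(F_2)$ and cannot be invoked here. The paper's design avoids off-diagonal terms entirely: the parameters satisfy condition \eqref{nicecondition}, so every monomial produced has degree strictly below $\deg R=g/2+1$, and by Lemma \ref{mt} the character sum vanishes identically unless the argument is a cube; the real work is then the combinatorial bound for the surviving cube terms (Lemma \ref{estimates}), where the cancellation $a(P;j)-a(P;\JJ)$ between mollifier and $L$-polynomial must be extracted. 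If you insist on longer polynomials you would need a genuine non-cube character-sum estimate (Perron together with the bounds \eqref{lindelof}--\eqref{folednil}, as in Section \ref{non-cube}), which you do not provide. Finally, your majorant silently drops the squares of primes present in Lemma \ref{like-chandee}; in this unitary family they contribute no main term but cannot simply be discarded, and the paper treats them separately via the factors $S_{j,\kk}$ and Lemma \ref{lemma-square-primes}.
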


\begin{rem}  Because of the presence of the mollifier, dividing by $\# \mathcal{C}(g)$, all moments are bounded by a constant, and they do not grow. Using the first and second moment then leads to a positive proportion of non-vanishing.
\end{rem}

\subsection{Proof of Theorem \ref{positive-prop}}
The proof of Theorem \ref{positive-prop}} follows from a simple application of Cauchy--Schwarz and Theorems \ref{first-moment} and \ref{second-moment} for $\kappa=1$. Indeed, 
$$
\sum_{\substack{ \chi \in \mathcal{C}(g) \\L(\frac{1}{2}, \chi) \neq 0}} 1  \geq 
\frac{ \Big| \sum_{\chi \in \mathcal{C}(g)} \displaystyle {\textstyle L(\frac{1}{2},\chi) }{\textstyle M(\chi;1)}\Big| ^2}{ \displaystyle 
\sum_{\chi \in \mathcal{C}(g)} 
 |{\textstyle L(\frac{1}{2},\chi) }{\textstyle M(\chi;1)}|^2} \gg q^{g}. \qed$$
\begin{rem}
Combining Corollary \ref{lb} and equation \eqref{big_bound}, we get the explicit proportion
$$ \#\{ \chi \in \mathcal{C}(g) \, |\,{\textstyle L(\frac{1}{2}, \chi)} \neq 0\}   \geq  0.3773 e^{-e^{182}} q^{g+2},$$
and using \eqref{boundc3}, 
\[\frac{\#\{\chi \in \mathcal{C}(g) \,|\, L(\frac{1}{2}, \chi) \neq 0\}}{\#\mathcal{C}(g)}\geq \left(1-e^{-e^{84}}\right)^2\frac{e^{-e^{182}}}{\zeta_q(2)^3\zeta_q(3)^2}\geq 0.4718e^{-e^{182}}.\]
\end{rem}
\kommentar{\mcom{I vote for 
$$ \#\{ \chi \in \mathcal{C}(g) \, |\,{\textstyle L(\frac{1}{2}, \chi)} \neq 0\}   \geq  0.3773 e^{-e^{182}} q^{g+2}.$$
and, using \eqref{boundc3}, 
\[\frac{\#\{\chi \in \mathcal{C}(g) \,|\, L(\frac{1}{2}, \chi) \neq 0\}}{\#\mathcal{C}(g)}\geq \left(1-e^{-e^{84}}\right)^2\frac{e^{-e^{182}}}{\zeta_q(2)^3\zeta_q(3)^2}\geq 0.4718e^{-e^{182}}.\]
{\color{purple} Another Matilde: The above is a place holder to be replaced by the right computation once we agree with what to write in the end.}}
\acom{I'd vote for getting rid of the $(1-e^{-e^{84}})^2$ term to make it consistent with the way we write the first lower bound. Otherwise I agree with both bounds, but see my previous comment on Corollary $1.4$ as well.}  \mcom{But we can't get rid of it in the middle term. How about now?}}


\subsection{Overview of the paper}

This paper contains two main results, which are proven with different techniques. 

We first prove the upper bound for 
the mollified moments, adapting the setting and notation of \cite{L-R} to the case of cubic characters (and to the function field case). 
The combinatorics to give an upper bound to the contribution of the diagonal terms is significantly more complicated, in part because the special values of the cubic $L$-functions are not real numbers, and they have to be considered in absolute value, and in part because we are identifying cubes and not squares. This also applies to the proof of the almost-sharp upper bound for the $L$-functions we consider, which is needed as a starting point to prove the sharp upper bound. Because we are dealing with cubic characters, we also have to bound the contribution of the squares of the primes, unlike the case of quadratic characters, where the squares of the primes contribute to the main term. In the language of random matrix theory, the family of cubic characters is a unitary family, and the family of quadratic characters is a symplectic family (for Dirichlet twists) or an orthogonal family (for twists of a modular form). In \cite{Harper}, the author also bounds the contribution of the squares of the primes to get sharp upper bounds on the moments of ${\textstyle|\zeta(\frac12 + it)|}$, which is a unitary family. In our case, because of the presence of the mollifier, mixing the square of the primes with the primes is very cumbersome, and we treat them separately with an additional use of the Cauchy--Schwarz inequality.
The contribution from the squares of the primes morally behaves like $L(1,\overline{\chi})$. Bounding this contribution is similar to getting an upper bound for the average of $L(1,\chi)$, which is much simpler than the original problem of bounding the average of $L(\frac12, \chi).$ 


We then proceed to the evaluation of the first mollified moment. Because the mollifier is a finite Dirichlet polynomial, this amounts to the computation of a ``twisted first moment", see Proposition \ref{twist}. 
The evaluation of this twisted first moment is similar to the evaluation of the first moment for the non-Kummer family in \cite{DFL}, relying on the approximate functional equation and powerful results on the distribution of cubic Gauss sums.



The structure of the paper is as follows. Section \ref{function-fields} contains the standard properties of cubic characters over function fields that are used throughout the paper. Section \ref{setting} contains the proof of Theorem \ref{moll_ub} modulo three important results proven in three subsequent sections: a technical lemma proven in Section \ref{friendly-lemma}, an upper bound for the contribution of the square of the primes in Section \ref{square-primes}, and the proof of a proposition  giving an almost-sharp upper bound for the unmollified moments of 
${\textstyle L(\frac{1}{2},\chi)}$ in Section \ref{ub_sound}. 
In Section \ref{explicit-UB}, we give some estimates on the (extremely small) positive proportion of Theorem \ref{positive-prop}. Finally, 
Section \ref{sec:first-moment} contains the asymptotic for the first mollified moment, following the lines of \cite{DFL} where the first moment is computed.

\medskip
\noindent{\bf Acknowledgements.} The authors would like to thank Maksym Radziwi\l\l~  
for drawing our attention to his work with Lester and  for very helpful discussions, and to Stephen Lester for interesting insigths and comments. 
The research of the first and third authors is supported by the National Science and Engineering Research Council of Canada (NSERC) and 
the Fonds de recherche du Qu\'ebec -- Nature et technologies (FRQNT). The second author of the paper was supported by a National Science Foundation (NSF) Postdoctoral Fellowship during part of the research which led to this paper and she wishes to thank the initiative ``A Room of One's Own'' for focused time.
\kommentar{\section{Strategy of the proof}

Let $q \equiv 2 \pmod 3$. 
We will define a mollifier such that
$$ \sumstar_{\substack{ \chi^3=\chi_0 \\ \text{genus}(\chi)=g}} {\textstyle L(\frac{1}{2},\chi) }{\textstyle M(\chi;\frac{1}{\kappa})}  \sim A q^{g+2},$$ for some constant $A$ and
$$\sumstar_{\substack{ \chi^3=\chi_0 \\ \text{genus}(\chi)=g}} |{\textstyle L(\frac{1}{2},\chi) }{\textstyle M(\chi;\frac{1}{\kappa})}|^2 \leq B q^{g+2}.$$
Then from Cauchy--Schwarz we'd get
$$ \sumstar_{\substack{ \chi^3=\chi_0 \\ \text{genus}(\chi)=g \\ {\textstyle L(\frac{1}{2},\chi) }\neq 0}}  1 \geq 
\frac{ \Big| \displaystyle \sum_{\chi \in \mathcal{C}(g)}{\textstyle L(\frac{1}{2},\chi) }{\textstyle M(\chi;\frac{1}{\kappa})}\Big| ^2}{ \displaystyle \sum_{\chi \in \mathcal{C}(g)} |{\textstyle L(\frac{1}{2},\chi) }{\textstyle M(\chi;\frac{1}{\kappa})}|^2} \geq \frac{A^2}{B} q^{g+2}.$$

We start with the sharp upper bound for mollified moments. 
We aim to prove the following. 

\begin{thm}\label{moll_ub}[Proposition 4.1 in \cite{L-R}] Let $\k, \kappa > 0$ with $\kappa$ independent of $k$ such that $\kappa \cdot k$ is an even integer.  Then,
with $M(\chi;\frac{1}{\kappa})$ as in Subsection \ref{mollifier}, we have
$$ \sum_{\chi \in \mathcal{C}(g)} {\textstyle|L(\frac{1}{2},\chi) |^{k} | M(\chi;\frac{1}{\kappa})|^{\kk \kappa}} \ll q^g.$$ \label{ub}
\end{thm}
}
\section{Background}
\label{function-fields}

Let $q$ be an odd prime power. We denote by $\mathcal{M}_q$ the set of monic polynomials of $\F_q[T]$, by $\mathcal{M}_{q,\leq d}$ the subset of degree less or equal to $d$, and by  $\mathcal{M}_{q,d}$ the subset of degree exactly $d$. Similarly, $\mathcal{H}_q$, $\mathcal{H}_{q, \leq d}$, and $\mathcal{H}_{q,d}$ denote the analogous sets of monic square-free polynomials. 
In general, all sums over polynomials in $\F_q[T]$ are always taken over monic polynomials.
The norm of a polynomial $f(T)\in \F_q[T]$ is given by 
\[|f|_q=q^{\deg(f)}.\] 
In particular, if $f(T)\in \F_{q^n}[T]$, we have $|f|_{q^n}=q^{n\deg(f)}$ for any positive $n$. We will write $|f|$ instead of $|f|_q$ when there is no ambiguity.

The primes of $\F_q[T]$ are the monic irreducible polynomials. Let $\pi(n)$ be the number of primes of $\F_q[T]$ of degree $n$. By considering all the roots of these polynomials, we see that  $n\pi(n)$ counts the number of elements in $\F_{q^n}$ of degree exactly $n$ over the base field $\F_q$, which is less or equal than the total number of elements in $\F_{q^n}$. Therefore 
\begin{equation}\label{ppt-bound}
 \pi(n)\leq \frac{q^n}{n}.
\end{equation}
More precisely, the Prime Polynomial Theorem 
(\cite{rosen}, Theorem 2.2) states that the number $\pi(n)$  of primes of $\F_q[T]$ of degree $n$ satisfies
\begin{equation}
\pi(n)=\frac{q^n}{n}+O\Big(\frac{q^{n/2}}{n}\Big).
\label{ppt}
\end{equation}

The von-Mangoldt function is defined as
$$ 
\Lambda(f) = 
\begin{cases}
\deg(P) & \mbox{ if } f=cP^k, c \in \F_q^{*}, P \text{ prime}, \\ 
0 & \mbox{ otherwise.}
\end{cases}
$$
Recall that for $f\in \F_q[T]$ the M\"obius function $\mu(f)$ is $0$ if $f$ is not square-free and $(-1)^t$ if $f$ is a constant times a product of $t$ different primes. 
The Euler $\phi_q$ function is defined as $\# (\F_q[T]/( f \F_q[T]))^{*}$. It satisfies
\[\phi_q(f)=|f|_q\prod_{P\mid f}(1-|P|_q^{-1}),\]
and
\[\sum_{\substack{d\in \mathcal{M}_q\\d\mid f}}\frac{\mu(d)}{|d|_q}=\frac{\phi_q(f)}{|f|_q}.\] 
When $f(T) \in \F_{q^n}[T]$ we may consider $\phi_{q^n}$ defined similarly.

In this paper we consider the non-Kummer case of cubic Dirichlet character over $\F_q[T]$, where $q \equiv 2 \pmod 3$. These characters are best described as a subset of the cubic characters over $\F_{q^2}[T]$. Notice that $q^2 \equiv 1 \pmod 3$, so 
let $q\equiv 1 \pmod{3}$ momentarily, and we proceed to construct cubic Dirichlet characters over $\F_{q}[T]$ as follows. We fix  an isomorphism $\Omega$ between the third roots of unity $\mu_3\subset \C^*$ and the cubic roots of 1 in $\F_{q}^*$.  Let $P$ be a prime polynomial in $\F_{q}[T]$, and let $f \in \F_{q}[T]$ be such that $P\nmid f$. Then there is  a unique $\alpha \in \mu_3$ such that 
\[f^{\frac{q^{\deg(P)}-1}{3}}\equiv \Omega(\alpha) \pmod{P}.\]
Remark that the above equation is solvable because $q \equiv 1 \pmod{3}$. 
Then we set 
\[\chi_P(f):=\alpha.\]
We remark that there are two such characters, $\chi_P$ and $\overline{\chi_P}=\chi_P^2$, depending on the choice of $\Omega$.  

This construction is extended by multiplicativity to any monic polynomial $F\in \F_q[T]$. In other words, if $F=P_1^{e_1}\cdots P_s^{e_s}$ where the $P_i$ are distinct primes, then 
\[\chi_F=\chi_{P_1}^{e_1}\cdots \chi_{P_s}^{e_s}.\]
We have that $\chi_F$ is a cubic character modulo $P_1\cdots P_s$. It is primitive if and only if $e_i=1$ or $e_i=2$ for all $i$. 

If $q \equiv 1 \pmod 6$, then we have perfect Cubic Reciprocity. Namely, let $a, b \in \F_q[T]$ be relatively prime monic polynomials, and let $\chi_a$ and $\chi_b$ be the cubic residue symbols defined above. If $q \equiv 1 \pmod 6$, then
\begin{equation} \label{cubic-reciprocity}
\chi_a(b) = \chi_b(a).
\end{equation}
\kommentar{
\begin{proof} This is Theorem 3.5 in \cite{Rosen} in the case where $a$ and $b$ are monic and $q \equiv 1 \pmod 6$. \end{proof}
\acom{I would probably just write this as an equation instead of a lemma, but I don't have a strong preference.}
}

When $q \equiv 2 \pmod 3$, the above construction of $\chi_P$ will also give a cubic character as long as $P$ has {\em even} degree, 
and the character can be extended by multiplicativity.
In the non-Kummer case, a better way to describe cubic characters is to see them as restriction characters defined over $\F_{q^2}[T]$. This 
description was formulated by Bary-Soroker and Meisner \cite{BSM}, who generalized the work of Baier and Young \cite{BY} from number fields to function fields. We summarize their work here. Let $\pi$ be a prime in $\F_{q^2}[T]$ lying over a prime $P\in \F_q[T]$ of even degree. Then $P$ splits and we can write $P=\pi \tilde{\pi}$, where 
$\tilde{\pi}$ denotes the  Galois conjugate of $\pi$. Remark that $P \in \F_q[T]$ splits if and only if $\deg(P)$ is even. Then the restrictions of $\chi_\pi$ and $\chi_{\tilde{\pi}}$ to $\F_q[T]$ are $\chi_P$ and $\overline{\chi_P}$ (possibly exchanging the order of characters). Using multiplicativity, it follows that the cubic characters over $\F_q[T]$ are given by the characters $\chi_F$  where $F \in \F_{q^2}[T]$ is square-free and not divisible by any prime $P(T) \in \F_q[T]$.

\kommentar{\acom{As in the introduction, if $\mathcal{C}(g)$ denotes the set of primitive cubic Dirichlet characters over $F_q[T]$ of genus $g$, then from Lemma $2.10$ in \cite{DFL}, we have that for $g$ even,
$$ | \mathcal{C}(g)| =Bq^g+O(q^{\frac{g}{2}(1+\varepsilon)}),$$ for some explicit constant $B$.}
\mcom{I agree to add this, but we should do it after we talk about genus and its relation with the degree of the conductor.}}

Given a primitive cubic Dirichlet character $\chi$ of conductor $F=P_1\cdots P_s$, the $L$-function is defined by
\[L(s,\chi):=\sum_{f \in \mathcal{M}_q}\frac{\chi(f)}{|f|_q^s}= \sum_{\deg(F)<d} q^{-ds} \sum_{f \in \mathcal{M}_{q,d}} \chi(f),\] 
where the second equality follows from the orthogonality relations for $\chi$. The $L$-function above can be written as a polynomial by making the change of variables $u=q^{-s}$, namely,
\[\mathcal{L}(u,\chi)= \sum_{\deg(F)<d} u^d \sum_{f \in \mathcal{M}_{q,d}} \chi(f).\]
Let $C$ be a curve of genus $g$ over $\F_q(T)$ whose function field is a cyclic cubic extension of $\F_q(T)$. From the Weil conjectures, the zeta function of the curve is given by 
\[\mathcal{Z}_C(u)=\frac{\mathcal{P}_C(u)}{(1-u)(1-qu)}.\]
In the case under consideration (that is, $q\equiv 2 \pmod{3}$), we have that \begin{equation} \label{poly-PC}
\mathcal{P}_C(u)=\frac{\mathcal{L}(u,\chi)\mathcal{L}(u,\overline{\chi})}{(1-u)^2},\end{equation}
where $\chi$ and $\overline{\chi}$ are the two cubic Dirichlet characters corresponding to the function field of $C$. 
Because of the additional factors of $(1-u)$ in the denominator of \eqref{poly-PC}, there are extra sums in the approximate functional equation for
$\mathcal{L}(u,\chi)$ in this case (see  Proposition \ref{prop-AFE}).
Furthermore, the Riemann--Hurwitz formula implies that the conductor $F$ of $\chi$ and $\overline{\chi}$ satisfies  $\deg(F)=g+2.$

As in the introduction, let $\mathcal{C}(g)$ denote the set of primitive cubic Dirichlet characters of genus $g$ over $\F_q[T]$.
From the above discussion, we have that
\begin{equation}  \label{square-free-sum}
\mathcal{C}(g) = \left\{ \chi_F \in {\mathcal{H}}_{q^2, g/2+1} \;:\; P \mid F \Rightarrow P \not\in \F_q[T] \right\},
\end{equation}
and in particular $g$ is even.
 In that case, from Lemma $2.10$ in \cite{DFL}, we have
$$ \# \mathcal{C}(g) =c_3q^{g+2}+O(q^{\frac{g}{2}(1+\varepsilon)}),$$ where
\[c_3=\prod_{\substack{R \in \F_q[T]\\\deg(R) \,\mathrm{ odd}} }\left(1-\frac{1}{|R|^2}\right)\prod_{\substack{R \in \F_q[T]\\\deg(R) \,\mathrm{ even}} }\left(1-\frac{3}{|R|^2}+\frac{2}{|R|^3}\right).\]
We remark that
\begin{equation}\label{boundc3}
c_3\leq  \prod_{\substack{R \in \F_q[T]} }\left(1-\frac{1}{|R|^2}\right)=\zeta_q(2)^{-1}.
\end{equation}


The following statement (Proposition $2.5$ from \cite{DFL}) provides the approximate functional equation of the $L$--function. 
\begin{prop} [Approximate Functional Equation, Proposition 2.5 \cite{DFL}] \label{prop-AFE} Let $q\equiv 2 \pmod{3}$ 
and let $\chi$ be a primitive cubic character of modulus $F$. Let $X \leq g$. Then
\begin{align*}
{\textstyle\mathcal{L}( \frac{1}{\sqrt{q}} ,\chi )} =&\sum_{f \in \mathcal{M}_{q, \leq X}} \frac{ \chi(f)}{ q^{\deg(f)/2}} + \omega (\chi) \sum_{f \in \mathcal{M}_{q, \leq g-X-1}} \frac{ \overline{\chi}(f)}{q^{\deg(f)/2}} \\
&+ \frac{1}{1-\sqrt{q}} \sum_{f \in \mathcal{M}_{q, X+1}} \frac{ \chi(f)}{q^{\deg(f)/2}} + \frac{\omega(\chi)}{1-\sqrt{q}} \sum_{f \in \mathcal{M}_{q,g-X}} \frac{ \overline{\chi}(f)}{q^{\deg(f)/2}},
\end{align*}
where 
\begin{equation}\label{rootnumber}\omega(\chi) = -q^{-(\deg(F)-2)/2} \sum_{f \in \mathcal{M}_{\deg(F)-1}} \chi(f)
 \end{equation}
 is the root number, and $g=\deg(F)-2$.
\end{prop}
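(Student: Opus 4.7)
The plan is to derive this approximate functional equation from the Weil functional equation of the zeta function $\mathcal{Z}_C(u)$, keeping careful track of the factorization \eqref{poly-PC}, which places an extra $(1-u)$ factor inside each of $\mathcal{L}(u,\chi)$ and $\mathcal{L}(u,\overline{\chi})$.

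First I would invoke orthogonality of $\chi$ modulo $F$: since $\deg(F)=g+2$, the inner sum $\sum_{f\in\mathcal{M}_{q,d}}\chi(f)$ vanishes for $d\geq g+2$, so $\mathcal{L}(u,\chi)=\sum_{d=0}^{g+1}A_d u^d$ with $A_d:=\sum_{f\in\mathcal{M}_{q,d}}\chi(f)$, and similarly $B_d$ for $\overline{\chi}$. Combining \eqref{poly-PC} with the Weil functional equation $\mathcal{P}_C(u)=(qu^2)^g\mathcal{P}_C(1/(qu))$ shows that the completed $L$-function $\mathcal{L}^*(u,\chi):=\mathcal{L}(u,\chi)/(1-u)$ is a polynomial of degree $g$ satisfying the clean functional equation
\[
\mathcal{L}^*(u,\chi)=\omega(\chi)(\sqrt{q}\,u)^g\mathcal{L}^*(1/(qu),\overline{\chi}).
\]
Expanding $\mathcal{L}^*(u,\chi)=\sum_{d=0}^g A^*_d u^d$ with analogous $B^*_d$, this translates at the level of coefficients to $q^{-d/2}A^*_d=\omega(\chi)q^{-(g-d)/2}B^*_{g-d}$, and comparing top-degree coefficients recovers the expression \eqref{rootnumber} for $\omega(\chi)$.

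Next I would apply the standard truncate-and-flip argument to the polynomial $\mathcal{L}^*$: evaluate at $u=1/\sqrt{q}$, split the sum at degree $X$, and use the coefficient-level FE on the tail to reach the ``clean'' AFE
\[
\mathcal{L}^*(1/\sqrt{q},\chi)=\sum_{d=0}^{X}q^{-d/2}A^*_d+\omega(\chi)\sum_{d=0}^{g-X-1}q^{-d/2}B^*_d.
\]
To transfer this back to $\mathcal{L}$, I would combine the identity $\mathcal{L}(1/\sqrt{q},\chi)=(1-q^{-1/2})\mathcal{L}^*(1/\sqrt{q},\chi)$ with Abel summation against $A^*_d=\sum_{e\leq d}A_e$: the factor $(1-q^{-1/2})$ telescopes each $A^*$-sum into the corresponding $A$-sum plus a single boundary contribution $-q^{-(X+1)/2}A^*_X$, and analogously on the $B$-side. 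Rewriting $A^*_X=A^*_{X+1}-A_{X+1}$ and invoking the coefficient FE $q^{-(X+1)/2}A^*_{X+1}=\omega(\chi)q^{-(g-X-1)/2}B^*_{g-X-1}$ folds part of this correction into the opposite sum; when the paired $B$-side correction is processed symmetrically, the residual pieces should collect precisely into the stated $\tfrac{1}{1-\sqrt{q}}A_{X+1}$ and $\tfrac{\omega(\chi)}{1-\sqrt{q}}B_{g-X}$ boundary terms.

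I expect the main technical obstacle to be exactly this boundary bookkeeping: because of the extra $(1-u)$ factor, the truncation does not land at a symmetric breakpoint, and one must trace a short cascade of substitutions before the $1/(1-\sqrt{q})$ coefficients assemble correctly; in particular, one has to verify that the two $B^*_{g-X-1}$ contributions produced on the $A$- and $B$-sides combine to leave only the degree-$(g-X)$ term with the stated coefficient, and that the corresponding cancellation on the other side yields only the degree-$(X+1)$ term.
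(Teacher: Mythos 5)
Your overall route is sound and is essentially the standard derivation: the paper gives no argument for Proposition \ref{prop-AFE} at all, deferring to Proposition 2.5 of \cite{DFL}, and the proof there follows the same scheme you describe (pass to the degree-$g$ polynomial $\mathcal{L}^*(u,\chi)=\mathcal{L}(u,\chi)/(1-u)$, flip the tail of the sum at $u=1/\sqrt{q}$ using its functional equation, then undo the $(1-u)$ factor). I checked your boundary bookkeeping and it does close up: with $A_d^*=\sum_{e\le d}A_e$ and $B_d^*$ analogously, multiplying the clean AFE by $1-q^{-1/2}$ produces exactly the corrections $-q^{-(X+1)/2}A_X^*$ and $-\omega(\chi)q^{-(g-X)/2}B^*_{g-X-1}$, and the coefficient relation $q^{-d/2}A_d^*=\omega(\chi)q^{-(g-d)/2}B^*_{g-d}$ applied at $d=X$ and $d=X+1$ shows these two corrections equal the two $\frac{1}{1-\sqrt{q}}$ terms of the statement; likewise the top-degree comparison gives \eqref{rootnumber} once you note that $A_g^*=-\sum_{f\in\mathcal{M}_{q,g+1}}\chi(f)$ because $\mathcal{L}(1,\chi)=0$ (a point you should make explicit, since it is exactly the trivial zero that makes the sign in \eqref{rootnumber} come out).

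The one real gap is your first step. The Weil functional equation $\mathcal{P}_C(u)=(qu^2)^g\mathcal{P}_C(1/(qu))$ combined with \eqref{poly-PC} constrains only the product $\mathcal{L}^*(u,\chi)\mathcal{L}^*(u,\overline{\chi})$; it does not by itself yield the individual relation $\mathcal{L}^*(u,\chi)=\omega(\chi)(\sqrt{q}\,u)^g\mathcal{L}^*(1/(qu),\overline{\chi})$, on which your entire coefficient argument rests. You must either quote the functional equation for primitive Dirichlet $L$-functions over $\F_q[T]$ directly (as \cite{DFL} does), or supply the missing argument, e.g.: by the Riemann Hypothesis part of Weil's theorem every inverse root $\alpha$ of $\mathcal{L}^*(u,\chi)$ satisfies $|\alpha|^2=q$, hence $q/\alpha=\overline{\alpha}$; since $\mathcal{L}^*(u,\overline{\chi})$ has the complex-conjugate coefficients, its inverse roots are precisely the $\overline{\alpha}$, which identifies $(\sqrt{q}\,u)^g\mathcal{L}^*(1/(qu),\overline{\chi})$ with a unimodular constant multiple of $\mathcal{L}^*(u,\chi)$ and defines $\omega(\chi)$. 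Relatedly, say a word on why $(1-u)$ divides $\mathcal{L}(u,\chi)$ so that $\mathcal{L}^*$ is a polynomial of degree exactly $g$: from \eqref{poly-PC}, $(1-u)^2$ divides $\mathcal{L}(u,\chi)\mathcal{L}(u,\overline{\chi})$, and the two factors vanish at $u=1$ simultaneously because $\mathcal{L}(1,\overline{\chi})=\overline{\mathcal{L}(1,\chi)}$, after which the degree count $2g=2(g+1)-2$ forces $\deg\mathcal{L}^*=g$. With these two points filled in, your argument is complete and agrees with the cited proof.
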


Now let $\chi$ be a primitive cubic character of conductor $F$ defined over $\F_q[T]$. Then, for 
$\re(s) \geq 1/2$ and for all $\varepsilon > 0$, we have the following upper bound
\begin{equation}
\left|L(s, \chi) \right| \ll q^{ \varepsilon \deg(F)}.
\label{lindelof}
\end{equation}
For $\re(s) \geq 1$ and for all $\varepsilon > 0$, we also have the lower bound
\begin{equation}
\left| L(s, \chi) \right| \gg q^{ - \varepsilon \deg(F)}.
\label{folednil}
\end{equation}
(See Lemmas $2.6$ and $2.7$ in \cite{DFL}.)
\kommentar{The following lemmas provide upper and lower bounds for $L$--functions. 
\begin{lem}[Lemma 2.6, \cite{DFL}] \label{lindelof}
Let $\chi$ be a primitive cubic character of conductor $F$ defined over $\F_q[T]$. Then, for 
$\re(s) \geq 1/2$ and for all $\varepsilon > 0$,
$$\left|L(s, \chi) \right| \ll q^{ \varepsilon \deg(F)}.$$
\end{lem}

\begin{lem}[Lemma 2.7, \cite{DFL}]\label{folednil}
Let $\chi$ be a primitive cubic character of conductor $F$ defined over $\F_q[T]$. Then, for $\re(s) \geq 1$ and for all $\varepsilon > 0$, 
$$\left| L(s, \chi) \right| \gg q^{ - \varepsilon \deg(F)}.$$
\end{lem}

\acom{Here I would just display the bounds as equations instead of writing them as lemmas, but again, I don't have a strong preference.}
}

\kommentar{As previously remarked, when $q\equiv 2 \pmod{3}$, $\chi$ is a restriction of a character over $\F_{q^2}[T]$ of conductor $\pi_1\cdots \pi_s$, where each $\pi_i$ is a prime lying over $P_i$.  Then the degree of its conductor over $\mathbb{F}_q[T]$ is $g/2+1$. Thus we obtain the following. 
\begin{lem} \label{square-free-sum}
Let $q\equiv 2 \pmod{3}$ and $h \in \F_q[T]$. Then 
\[
\sum_{\chi \in \mathcal{C}(g)}\chi(h){\textstyle L(\frac{1}{2}, \chi)} =  \sum_{\substack{F \in \mathcal{H}_{q^2, g/2+1} \\
P \mid F \Rightarrow P \not\in \F_q[T]}} \chi(h)\textstyle{L(\frac{1}{2}, \chi_F)}.\]
\end{lem}}

We recall Perron's formula over $\F_q[T]$ which will be used several times in Section \ref{sec:first-moment}.

\begin{lem}[Perron's Formula] \label{perron}
If the generating series $\mathcal{A}(u) =\sum_{f \in \mathcal{M}_q} a(f) u^{\deg(f)}$ 
 is absolutely convergent in $|u|\leq r<1$, then
\[\sum_{{f} \in \mathcal{M}_{q,n}} a(f)=\frac{1}{2\pi i}\oint_{|u|=r} \frac{\mathcal{A}(u)}{u^n} \; \frac{du}{u}\]
and
\[\sum_{{f} \in \mathcal{M}_{q,\leq n}} a(f)=\frac{1}{2\pi i}\oint_{|u|=r} \frac{\mathcal{A}(u)}{u^{n}(1-u)} \; \frac{du}{u},\]
where, in the usual notation, we take $\oint$ to signify the integral over the circle around the origin oriented counterclockwise. 
\end{lem}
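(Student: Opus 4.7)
The plan is to expand the generating series in its Laurent series around the origin and use the standard orthogonality relation for the circle integral. Let me set $c_d = \sum_{f \in \mathcal{M}_{q,d}} a(f)$ so that, by definition, $\mathcal{A}(u) = \sum_{d \geq 0} c_d u^d$, and this power series converges absolutely on $|u| \leq r < 1$ by hypothesis. Because the absolute convergence holds uniformly on the circle $|u| = r$, I can interchange the summation and the contour integral throughout.

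For the first formula, I would substitute the series expansion and compute
\[
\frac{1}{2\pi i}\oint_{|u|=r} \frac{\mathcal{A}(u)}{u^{n+1}}\, du = \sum_{d \geq 0} c_d \cdot \frac{1}{2\pi i}\oint_{|u|=r} u^{d-n-1}\, du = c_n,
\]
using the elementary identity $\frac{1}{2\pi i}\oint_{|u|=r} u^{k}\, du = \delta_{k,-1}$, valid for any integer $k$. This immediately yields $\sum_{f \in \mathcal{M}_{q,n}} a(f) = c_n$ as claimed.

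For the second formula, I would first expand the geometric series $\frac{1}{1-u} = \sum_{m\geq 0} u^m$, which converges absolutely on $|u| = r < 1$. Multiplying by $\mathcal{A}(u)$ and gathering terms by degree (justified since both series converge absolutely on the contour), I obtain
\[
\frac{\mathcal{A}(u)}{1-u} = \sum_{N \geq 0} \Bigl(\sum_{d=0}^{N} c_d\Bigr) u^N.
\]
Then by exactly the same contour argument as above, the coefficient of $u^n$ in this expansion is extracted by the integral
\[
\frac{1}{2\pi i}\oint_{|u|=r} \frac{\mathcal{A}(u)}{u^{n+1}(1-u)}\, du = \sum_{d=0}^{n} c_d = \sum_{f \in \mathcal{M}_{q,\leq n}} a(f),
\]
which gives the second identity. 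The only subtlety is the interchange of summation and integration, which is justified by absolute (and hence uniform on $|u|=r$) convergence; there is no genuine obstacle in this proof.
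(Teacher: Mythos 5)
Your proof is correct: the coefficient-extraction argument via the orthogonality relation $\frac{1}{2\pi i}\oint_{|u|=r} u^{k}\,du=\delta_{k,-1}$, together with the geometric-series expansion of $\frac{1}{1-u}$ for the partial-sum version, is the standard (and essentially unique) way to establish this lemma, which the paper simply recalls without proof. The interchange of sum and integral is properly justified by absolute, hence uniform, convergence on the circle $|u|=r<1$, so there is nothing to add.
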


Finally, we recall the Weil bound for sums over primes. Let $\chi$ be a character modulo $B$, where $B$ is not a cube. Then
\begin{equation} \label{weil}
 \Big|\sum_{P \in \mathcal{P}_n} \chi(P)  \Big| \ll q^{n/2} \frac{\deg(B)}{n},
 \end{equation} where the sum is over monic, irreducible polynomials of degree $n$.

The following notation will be used often. We will write 
 $A \leq_\varepsilon B$ to mean $A\leq (1+\varepsilon) B$ for any $\varepsilon >0$ as $g\rightarrow \infty$.

\subsection{Cubic Gauss Sums} 

Let  $q \equiv 1 \pmod 3$. We now define cubic Gauss sums, and we state the result for the distribution of cubic Gauss sums that we are using in Section \ref{sec:first-moment}. 

Let $\chi$ be a (not necessarily primitive) cubic character of modulus $F$. 
The generalized cubic Gauss sum is defined by
\begin{eqnarray} \label{gen-GS} 
G_q(V,F) = \sum_{u \pmod F} \chi_F(u) e_q\left( \frac{uV}{F} \right ), \end{eqnarray}
where
\[e_q(a) = e^{\frac{2 \pi i \tr_{\F_q/\F_p}(a_1) }{p}},\]
is the exponential defined by Hayes \cite{hayes}, for any $a \in \mathbb{F}_q((1/T))$. 

When $(A, F)=1$, it is easy to see that
\begin{equation} \label{gauss-sum-prop}
G_q(AV, f) = \overline{\chi_f}(A) G_q(V, f).
\end{equation}
Furthermore, the shifted Gauss sum is almost multiplicative as a function of $F$. Namely, if $q \equiv 1 \pmod 6$, and  if $(F_1 ,F_2)=1$, then 
\begin{eqnarray*} G_q(V,F_1 F_2) &=& \chi_{F_1}(F_2)^2 G_q(V, F_1 )G_q(V, F_2). \end{eqnarray*}



\kommentar{\begin{lem} \label{gauss}
Suppose that $q \equiv 1 \pmod 6$.
\begin{itemize}
 \item[(i)] If $(F_1 ,F_2)=1$, then 
\begin{eqnarray*} G_q(V,F_1 F_2) &=& \chi_{F_1}(F_2)^2 G_q(V, F_1 )G_q(V, F_2)  \end{eqnarray*}
\item[(ii)] If $V=V_1P^\alpha$ where $P\nmid V_1$, then 
\[G_q(V, P^i) = \left\{\begin{array}{ll}
0 & \mbox{ if }i \leq \alpha \mbox{ and } i \not \equiv 0 \pmod{3},\\
                      \phi(P^{i})& \mbox{ if }i \leq \alpha \mbox{ and } i \equiv 0 \pmod{3},\\
                        -|P|_q^{i-1}&\mbox{ if }i=\alpha+1 \mbox{ and } i \equiv 0 \pmod{3},\\
                       \varepsilon(\chi_{P^i})  \omega(\chi_{P^i}) \chi_{P^i}(V_1^{-1}) |P|_q^{i-\frac{1}{2}} &\mbox{ if }i=\alpha+1 \mbox{ and }  i \not \equiv 0 \pmod{3},\\
                                               0 & \mbox{ if }i \geq \alpha+2,
                         \end{array}
 \right.
\]
\end{itemize}
where $\phi$ is the Euler $\phi$-function for polynomials. 
When $\chi$ is an even character, we have that $\varepsilon(\chi)=1$. 
\end{lem}
\begin{proof}
 See Lemma 2.12 from \cite{DFL}.
\end{proof}}

The generating series of the Gauss sums are given by 
\[\Psi_q(f,u)= \sum_{F \in \mathcal{M}_q} G_q(f,F)u^{\deg(F)}\]
and 
\begin{equation} \label{tilde-F}
\Tilde{\Psi}_q(f,u) = \sum_{\substack{F \in \mathcal{M}_q \\ (F,f)=1}} G_q(f,F) u^{\deg(F)}.
\end{equation}
The function $\Psi_q(f,u)$ was studied by Hoffstein \cite{hoffstein} and Patterson \cite{patterson}. In \cite{DFL} we worked with $\Tilde{\Psi}_q(f,u)$ and proved the following results. 

\begin{prop}[Proposition 3.1 and Lemmas 3.9 and 3.11, \cite{DFL}] \label{big-F-tilde-corrected}
Let $f=f_1 f_2^2 f_3^3$, where $f_1$ and $f_2$ are square-free and coprime, and let $f_3^*$ be the product of the primes dividing $f_3$ but not dividing $f_1f_2$. Then
\begin{align*} \nonumber
\sum_{\substack{F \in \mathcal{M}_{q,d}\\(F,f)=1}} G_q(f,F) = & 
\delta_{f_2=1} \frac{ q^{\frac{4d}{3} - \frac{4}{3} [d+ \deg(f_1)]_3} }{ \zeta_q(2)|f_1|_q^{2/3}} \overline{G_q(1,f_1)} \rho(1, [d+ \deg(f_1)]_3)\prod_{P\mid f_1 f_3^*} \left ( 1+\frac{1}{|P|_q}\right )^{-1} \\ \nonumber
&+ O \left ( \delta_{f_2=1} \frac{q^{\frac{d}{3}+\varepsilon d}} {|f_1|_q^{\frac{1}{6}}}\right )
+ \frac{1}{2\pi i} \oint_{|u|=q^{-\sigma}} \frac{\tilde{\Psi}_q(f,u)}{u^d}\frac{du}{u} 
\end{align*}
with $2/3<\sigma< 4/3$ and where $\Tilde{\Psi}_q(f,u)$ is given by \eqref{tilde-F},  $[x]_3$ denotes  an integer $a\in \{0,1,2\}$ such that $x\equiv a\pmod{3}$,  
\begin{equation*}
\rho(1, a)=\left\{ \begin{array}{cl}1 & a=0,\\
\tau(\chi_3) q &a=1,\\0 &a=2,
\end{array}\right.
\end{equation*}
and
\[\tau(\chi_3) =\sum_{a\in \F_q^*} \chi_3(a)e^{2\pi i \tr_{\F_q/\F_p}(a)/p}, \qquad \chi_3(a)=\Omega^{-1}\left (a^\frac{q-1}{3} \right).\]

Moreover, we have 
\[\frac{1}{2\pi i} \oint_{|u|=q^{-\sigma}} \frac{\tilde{\Psi}_q(f,u)}{u^d}\frac{du}{u} \ll q^{\sigma d} |f|_q^{\frac{1}{2} ( \frac{3}{2} - \sigma)} \mbox{  and  }\Tilde{\Psi}_q(f,u) \ll |f|_q^{{\frac{1}{2}}(\frac{3}{2}-\sigma)+\varepsilon}.\]

\end{prop}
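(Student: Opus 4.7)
The strategy is to apply Perron's formula (Lemma \ref{perron}) to the generating series $\tilde{\Psi}_q(f,u)$ defined in \eqref{tilde-F}, locate the pole of $\tilde{\Psi}_q$ at $u = q^{-4/3}$, and shift the contour of integration past that pole to extract the main term.

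First I would study the analytic properties of the unrestricted series $\Psi_q(f,u) = \sum_F G_q(f,F) u^{\deg F}$. By the work of Hoffstein and Patterson on cubic metaplectic theta functions over function fields, $\Psi_q(f,u)$ extends meromorphically to $\C$ and satisfies a functional equation of Patterson type. Its only possible pole in the relevant disk lies at $u = q^{-4/3}$, and the residue there vanishes unless the ``squareful'' part of $f$ is trivial, i.e.\ $f_2 = 1$ in the decomposition $f = f_1 f_2^2 f_3^3$. In that case the residue is essentially $\overline{G_q(1,f_1)}$ times a local product reflecting the almost-multiplicativity of $G_q(f,\cdot)$. Passing from $\Psi_q$ to $\tilde{\Psi}_q$ by M\"obius inversion on the coprimality condition $(F,f)=1$ then generates the Euler factor $\prod_{P \mid f_1 f_3^*}(1+|P|_q^{-1})^{-1}$ and the global constant $\zeta_q(2)^{-1}$ in the final residue.

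Next, I apply Perron: for a radius $q^{-\sigma_0}$ inside the disk of absolute convergence,
\[\sum_{\substack{F \in \mathcal{M}_{q,d}\\(F,f)=1}} G_q(f,F) \;=\; \frac{1}{2\pi i} \oint_{|u|=q^{-\sigma_0}} \frac{\tilde{\Psi}_q(f,u)}{u^d} \frac{du}{u},\]
and I shift the contour outward to $|u| = q^{-\sigma}$ with $2/3 < \sigma < 4/3$, crossing the pole at $u = q^{-4/3}$. Because $G_q(f,F)$ depends on $\deg F$ modulo $3$ through a factor involving $\tau(\chi_3)$, the coefficient of the pole is nonzero only on one residue class of $d+\deg f_1$ modulo $3$; this is precisely what $\rho(1,[d+\deg f_1]_3)$ records, and the associated power of $q$ appears as $q^{4d/3 - 4[d+\deg f_1]_3/3}$. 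Subleading singularities of $\tilde{\Psi}_q$ near its pole, together with near-cube contributions, account for the secondary error $O(\delta_{f_2=1}\, q^{d/3 + \varepsilon d}/|f_1|_q^{1/6})$, while the shifted contour is what remains of the integral in the statement.

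Finally, the bounds on $\tilde{\Psi}_q(f,u)$ on $|u| = q^{-\sigma}$ in the critical range are obtained by Phragm\'en--Lindel\"of interpolation. The trivial estimate $\tilde{\Psi}_q(f,u) = O(|f|_q^{\varepsilon})$ at $|u| = q^{-3/2}$ (absolute convergence) combines with the dual bound from the functional equation at the reflected radius $|u| = q^{-1/2}$ to yield $\tilde{\Psi}_q(f,u) \ll |f|_q^{(3/2-\sigma)/2+\varepsilon}$ throughout $2/3 < \sigma < 4/3$; substituting this into the contour integral gives the asserted bound $q^{\sigma d}|f|_q^{(3/2-\sigma)/2}$. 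The main obstacle is the fine pole analysis of $\Psi_q(f,u)$: isolating $\overline{G_q(1,f_1)}$ with the correct local Euler factors at primes dividing $f_1 f_3^*$ demands careful bookkeeping of the multiplicativity of the generalized cubic Gauss sum and precise handling of ramified primes, and it is here that the deep input from Hoffstein--Patterson on cubic theta functions is indispensable.
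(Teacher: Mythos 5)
This proposition is not proved in the present paper at all: it is quoted verbatim from \cite{DFL} (Proposition 3.1 and Lemmas 3.9 and 3.11 there), and your outline follows essentially the same route as that source — Perron's formula applied to $\tilde{\Psi}_q(f,u)$, the Hoffstein--Patterson analytic continuation with the pole at $u=q^{-4/3}$ whose residue survives only when $f_2=1$ and yields $\overline{G_q(1,f_1)}$ together with the local factors at $P\mid f_1f_3^*$, and the functional-equation/convexity estimate $\tilde{\Psi}_q(f,u)\ll |f|_q^{\frac{1}{2}(\frac{3}{2}-\sigma)+\varepsilon}$ on the shifted contour. So your approach matches the cited proof; the detailed residue bookkeeping that you correctly identify as the main technical burden is precisely the content carried out in \cite{DFL}.
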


When $q \equiv 2 \pmod 3$, the root number in equation \eqref{rootnumber} can be expressed in terms of cubic Gauss sums over $\F_{q^2}[T]$, as proven in (\cite{DFL}, Section 4.4). 
Let  $\chi$ be a primitive character of conductor $F \in \F_q[T]$. Then, $F$ is square-free and divisible only by primes $P(T)$ of even degree and 
\begin{equation}\label{Gauss-root}
\omega(\chi)=q^{-\frac{g}{2}-1}G_{q^2}(1,F),
\end{equation}
where
\begin{equation}\label{Gauss-size}
G_{q^2}(1,F)=q^{\deg(F)} 
\end{equation}
for $F\in \F_q[T]$ square-free (See Lemma $4.4$ in \cite{DFL}.)



\section{Setting and proof of Theorem \ref{moll_ub}}
\label{setting}
\subsection{Setting} 

Following the work of Soundararajan  on upper bounds for the Riemann zeta function \cite{Sound-again}, we first show that we can bound 
$\log | {\textstyle L(\frac{1}{2},\chi) }|$ by a short Dirichlet polynomial. The following statement is analogous to Proposition 4.3 from \cite{BFKRG}. 
\begin{lem} \label{like-chandee} Let $q \equiv 2 \pmod 3$ and let $\chi$ be a cubic Dirichlet character of genus $g$ over $\F_q[T]$. Then for $N\leq g+2$ we have 
\begin{align*}
\log | {\textstyle L(\frac{1}{2},\chi) }| & \leq \Re \Big(  \sum_{\deg(f) \leq N} \frac{ \Lambda(f) \chi(f)(N-\deg(f))}{N |f|^{\frac{1}{2}+\frac{1}{N\log q}} \deg(f)} \Big)+ \frac{g+2}{N} \nonumber \\
&=  \sum_{\deg(f) \leq N} \frac{ \Lambda(f)  ( \chi(f)+ \overline{\chi(f)})(N-\deg(f))}{2 N |f|^{\frac{1}{2}+\frac{1}{N\log q}} \deg(f)} + \frac{g+2}{N}.
\end{align*}
\end{lem}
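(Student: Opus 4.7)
The plan is to adapt Soundararajan's technique \cite{Sound-again} for bounding $\log|\zeta(1/2+it)|$ by a short Dirichlet polynomial to the function field setting. Over $\F_q[T]$ one has unconditionally (by Weil's theorem) the factorization
\[
\mathcal{L}(u,\chi) \;=\; (1-u)\prod_{j=1}^{g}(1-\alpha_j u), \qquad |\alpha_j|=\sqrt{q}.
\]
Writing $\alpha_j=\sqrt{q}\, e^{i\theta_j}$ gives the closed form
\[
\log|L(\tfrac12,\chi)| \;=\; \log|1-q^{-1/2}| + \sum_{j=1}^{g}\log|1-e^{i\theta_j}|.
\]
The error term $(g+2)/N$ in the lemma accounts for the $g$ nontrivial zeros of $\mathcal{L}(u,\chi)$ together with the two trivial $(1-u)$ factors, one from $\mathcal{L}(u,\chi)$ and one from $\mathcal{L}(u,\overline{\chi})$ (which together produce the $(1-u)^2$ in the denominator of $\mathcal{P}_C$ in \eqref{poly-PC}).

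First I would rewrite the right-hand side Dirichlet polynomial explicitly in terms of the $\alpha_j$. Matching the Taylor expansions of the product and series forms of $\log\mathcal{L}(u,\chi)$ yields the explicit formula
\[
\sum_{j=1}^{g}\alpha_j^k \;=\; -1 - k\sum_{\deg f = k}\frac{\Lambda(f)\chi(f)}{\deg f}.
\]
Substituting this and setting $\beta_j := e^{-1/N+i\theta_j}$ (so $|\beta_j|=e^{-1/N}<1$), a short manipulation shows that
\[
\Re\sum_{\deg f\le N}\frac{\Lambda(f)\chi(f)(N-\deg f)}{N|f|^{1/2+1/(N\log q)}\deg f} \;=\; -A_N\!\bigl(e^{-1/N}q^{-1/2}\bigr) \;-\; \sum_{j=1}^g \Re A_N(\beta_j),
\]
where $A_N(\beta):=\sum_{k=1}^{N}\frac{N-k}{Nk}\beta^k$. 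Invoking the algebraic identity $-\Re A_N(\beta) = \log|1-\beta| + \Re\sum_{k>N}\beta^k/k + \tfrac{1}{N}\Re\sum_{k=1}^{N}\beta^k$ (valid for $|\beta|<1$) reduces the desired inequality to the combination of three estimates.

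These three estimates are: (i) a pointwise comparison $\log|1-e^{i\theta}| \le \log|1-e^{-1/N+i\theta}| + O(1/N)$ uniformly in $\theta$, which follows from a direct calculation showing the ratio $|1-e^{i\theta}|/|1-e^{-1/N+i\theta}|$ is maximized at $\theta=\pi$ with value $2/(1+e^{-1/N})=1+O(1/N)$; (ii) the analogous trivial factor estimate $\log|1-q^{-1/2}| - \log|1-e^{-1/N}q^{-1/2}| = O(1/N)$; and (iii) control of the cumulative Fej\'er tail and endpoint error $\sum_j\bigl[\Re\sum_{k>N}\beta_j^k/k + \tfrac{1}{N}\Re\sum_{k=1}^N\beta_j^k\bigr]$. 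Steps (i) and (ii) together contribute $O((g+1)/N)$ to the error.

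The hard part will be step (iii): each individual error is only $O(1)$, giving a naive bound of $O(g)$ which is far too weak. The saving comes from swapping the order of summation, noting that $\sum_j e^{ik\theta_j} = q^{-k/2}\sum_j\alpha_j^k$, and then applying the explicit formula above together with the Weil bound \eqref{weil} to get $\sum_j e^{ik\theta_j} = O((g+2)/k)$. This converts $\sum_j\sum_{k>N}\beta_j^k/k$ into $\sum_{k>N}\frac{e^{-k/N}}{k}\cdot O((g+2)/k) = O((g+2)/N)$, and similarly for the endpoint term. Careful bookkeeping of the implicit constants, exploiting the non-negative Fourier structure of the triangular weight $(N-k)/N$ (which ensures zero contributions enter with the correct sign), should then produce the claimed bound with exactly $(g+2)/N$ rather than a larger multiple of it.
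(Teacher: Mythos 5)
Your reduction is set up correctly: the factorization $\mathcal L(u,\chi)=(1-u)\prod_{j\le g}(1-\alpha_j u)$ with $|\alpha_j|=\sqrt q$ is right in this non-Kummer setting, the matching of the Dirichlet polynomial with $-A_N(e^{-1/N}q^{-1/2})-\sum_j\Re A_N(\beta_j)$ is correct, and your steps (i) and (ii) are fine. The gap is in step (iii), which is the whole difficulty. The bound $\sum_j e^{ik\theta_j}=O((g+2)/k)$ does not follow from the explicit formula together with the Weil bound \eqref{weil}, and it is false in general; the reasoning is circular. The explicit formula gives $\sum_j e^{ik\theta_j}=-q^{-k/2}\big(1+\sum_{\deg f=k}\Lambda(f)\chi(f)\big)$, and the $\Lambda$-weighted sum is of size up to $(g+2)q^{k/2}$: the factor $\deg(B)/n$ in \eqref{weil} is there only because the unweighted prime sum is obtained after dividing out the weight $\Lambda(P)=\deg P=n$, so translated back to the zeros the Weil bound returns exactly the trivial estimate $\big|\sum_j e^{ik\theta_j}\big|\le g+O(1)$, and for a fixed $k$ nothing forces cancellation among the $g$ phases $e^{ik\theta_j}$. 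With the correct bound $O(g+2)$, your own computation gives $\sum_j\Re\sum_{k>N}\beta_j^k/k\ll g+2$, and similarly for the endpoint term $\frac1N\sum_j\Re\sum_{k\le N}\beta_j^k$ — precisely the $O(g)$ loss you acknowledged must be avoided — so the argument does not close, let alone produce the sharp constant $(g+2)/N$.

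What makes lemmas of this type work, both in Soundararajan's original argument and in Proposition 4.3 of \cite{BFKRG} (which is all the paper invokes, specialized to $z=0$, $N=h$, $m=g+2$), is positivity rather than cancellation: the per-zero discarded terms are controlled pointwise in $\theta_j$ by a nonnegative-kernel argument — the function-field analogue of $\Re\sum_\rho\frac{1}{s-\rho}\ge 0$, i.e.\ Poisson/Fej\'er-type positivity after the damping $e^{-k/N}$ — so they enter with a favorable sign and no bound on $\sum_j e^{ik\theta_j}$ for individual $k$ is ever needed. Your closing sentence about the ``non-negative Fourier structure of the triangular weight'' gestures at this, but in your decomposition the kept polynomial carries the extra factor $1/k$, and the quantities that must be lower-bounded are the tail and endpoint sums; establishing a pointwise lower bound of size $O(1/N)$ for these (with a constant small enough that, combined with the $1/(2N)$ loss from step (i), the total does not exceed $(g+2)/N$) is exactly the content of the cited proposition and is not supplied by your sketch. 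A minor further slip: only the single trivial factor of $\mathcal L(u,\chi)$ enters $\log|L(\frac12,\chi)|$; the second factor $(1-u)$ in \eqref{poly-PC} belongs to $\mathcal L(u,\overline\chi)$ and plays no role here, the count $g+2$ being simply $\deg F$.
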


\begin{proof} The proof follows that of Proposition 4.3 from \cite{BFKRG}, by setting $z=0$, $N=h$
and using the fact that $m=g+2$.
\end{proof}

Since $\Lambda(f)=0$ unless $f$ is a prime power and $\Lambda(P^j)=\deg(P)$,  we have
\begin{eqnarray*}
\label{ineq-1} 
\log | {\textstyle L(\frac{1}{2},\chi) }|  &\leq& \frac{1}{2} \sum_{\deg(P) \leq N} \frac{( \chi(P) + \overline{\chi}(P))(N-\deg(P))}{N|P|^{\frac{1}{2}+\frac{1}{N \log q}}}  + \frac{g+2}{N}\\
&&+   \frac{1}{4}  \sum_{\deg(P) \leq N/2} \frac{( \chi(P) + \overline{\chi}(P))(N-2\deg(P))}{N|P|^{1+\frac{2}{N \log q}}}\\
&&+\sum_{l \geq 3} \frac{1}{2l}\sum_{\deg(P) \leq N/l} \frac{ (\chi(P)^l + \overline{\chi}(P)^l)(N-l \deg(P))}{N|P|^{\frac{l}{2}+\frac{l}{N \log q}}}.
\end{eqnarray*}
It is easy to see that the powers of primes with $l \geq 3$ contribute $O(1)$ to the expression above. More precisely, using the Prime Polynomial Theorem \eqref{ppt-bound}, we have
\begin{align*}
\sum_{l \geq 3} &\sum_{\deg(P) \leq N/l} \frac{ (\chi(P)^l + \overline{\chi}(P)^l)(N-l \deg(P))}{2lN|P|^{\frac{l}{2}+\frac{l}{N \log q}}} \leq \sum_{l=3}^N \sum_{j \leq N/l} \frac{q^j(N-lj)}{lj Nq^{lj(\frac{1}{2}+\frac{1}{N\log q})}}\\
& \leq \frac{1}{N} \sum_{h=3}^N \frac{N-h}{hq^{h(\frac{1}{2}+\frac{1}{N\log q})}} \sum_{\substack{j|h \\ j \leq h/3}} q^j \leq \frac{1}{N} \sum_{h=3}^{N} \frac{N-h}{hq^{h(\frac{1}{2}+\frac{1}{N \log q})}} q^{h/3} \tau(h) \leq 2 \sum_{h=3}^{N} \frac{1}{q^{h(\frac{1}{6}+\frac{1}{N\log q})} \sqrt{h}} \\
&\leq 2 \sum_{h=3}^{\infty} \frac{1}{5^{\frac{h}{6}} \sqrt{h}}=:\eta=1.676972\dots. 
\end{align*}

Then, for any $\kk > 0$, 
\begin{eqnarray}
\label{ineq} \label{chandee}
 | {\textstyle L(\frac{1}{2},\chi) }|^\kk &\leq& \exp \Big\{  \kk  \Re \Big( \sum_{\deg(P) \leq N} \frac{ \chi(P)(N-\deg(P))}{N|P|^{\frac{1}{2}+\frac{1}{N \log q}}}  \Big) + \frac{\kk (g+2)}{N}+
\kk \eta \nonumber\\
&&+   \kk \Re \Big(  \sum_{\deg(P) \leq N/2} \frac{\chi(P) (N-2\deg(P))}{2N|P|^{1+\frac{2}{N \log q }}} \Big) \Big\}. 
\end{eqnarray}

Similarly as in \cite{L-R}, we separate the sum over primes in $J+1$ sums over  the intervals 
\begin{equation} \label{intervals} I_0 = (0,(g+2) \theta_0], I_1= ((g+2)\theta_0 , (g+2)\theta_1], \ldots, I_\JJ = ((g+2) \theta_{\JJ-1}, (g+2) \theta_\JJ], \end{equation}
where for $0 \leq j \leq \JJ$, we define $$ \theta_j = \frac{e^j}{(\log g)^{1000}},\;\;  \ell_j = 2\left[ \theta_j^{-b}\right],$$ for some $0<b<1$.
In view of \eqref{square-free-sum}, it is natural to use $g+2$ instead of $g$ in the definition of the intervals $I_j$. 

We will choose $\JJ$ such that $\theta_\JJ$  is a small positive constant. We discuss in Section \ref{explicit-UB} explicit upper bounds and how to choose $\theta_\JJ$. We remark that for a given choice of $\theta_\JJ$, we have $\JJ = [ \log (\log g)^{{1000}} + \log{\theta_\JJ}]$. 
The power of 1000 together with the parameters chosen in Section \ref{explicit-UB} guarantee that $\JJ$ is positive for any $g\geq 3$. 

For each interval $I_j$, we define 
$$ P_{I_j} (\chi;u) = \sum_{P \in I_j} \frac{ a(P;u) \chi(P)}{\sqrt{|P|}},$$
where
$$a(P; u) = \frac{1}{|P|^{\frac{1}{(g+2) \theta_u \log q}}} \left( 1 - \frac{\deg{P}}{(g+2) \theta_u} \right),$$
for $0\leq u\leq \JJ$, and we extend this to a completely multiplicative function in the first variable. 
By $P \in I_j$, we always mean that $\deg{P} \in I_j$.

 In order to use Lemma \ref{like-chandee} we need bounds for $\exp\left( \Re P_{I_j}(\chi; u) \right)$ on each interval $I_j$. Let $t \in \R$ and $\ell$ be a positive even integer. Let
\begin{equation}\label{finite-taylor}
E_{\ell}(t) = \sum_{s \leq \ell} \frac{t^s}{s!} .
\end{equation} 
Note that $E_\ell(t) \geq 1$ if $t \geq 0$ and that $E_\ell(t) > 0$ since $\ell$ is even. We also have that for $t \leq \ell/e^2$,
\begin{equation}
e^t \leq (1+e^{-\ell/2}) E_{\ell}(t).
\label{ineq_e}
\end{equation}

Let $\nu(f)$ be the multiplicative function defined by $\nu(P^a) = \frac{1}{a!},$ and let $\nu_j(f)= ( \nu * \ldots * \nu)(f)$ be the $j$-fold convolution of $\nu$. We then have $\nu_j(P^a) = \frac{j^a}{a!}$. 

The following lemma gives a formula for the powers $\left( \Re P_{I_j}(\chi; u) \right)^s$, and will be used frequently in the paper.
\begin{lem} \label{combinatorics}
Let $a(f)$ be a completely multiplicative function from $\F_q[T]$ to $\C$, and let $I$ be some interval. Let $P_I := \sum_{P \in I} a(P)$. Then for any integer $s$, we have
\begin{eqnarray*}
P_I^s &=& s! \sum_{\substack{P|f \Rightarrow P \in I \\ \Omega(f)=s}} a(f) \nu(f) ,\\
\left( \Re P_I \right)^s &=& \frac{ s!}{2^s} \sum_{\substack{P|f h  \Rightarrow P \in I \\ \Omega(f h )=s}} a(f) \overline{a(h)} \nu(f) \nu(h).
\end{eqnarray*}
\end{lem}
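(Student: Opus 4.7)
The plan is to prove both identities by direct expansion via the multinomial theorem and then regrouping by the product of the indexed primes, exploiting the fact that $a$ is completely multiplicative.

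For the first identity, I would write
\[
P_I^s = \Bigl(\sum_{P \in I} a(P)\Bigr)^{\!s} = \sum_{(P_1,\ldots,P_s)} a(P_1)\cdots a(P_s),
\]
where the sum runs over ordered $s$-tuples of primes in $I$. Complete multiplicativity gives $a(P_1)\cdots a(P_s) = a(P_1 \cdots P_s)$, so the tuples can be grouped by the value of the product $f := P_1 \cdots P_s$. For such an $f$ with factorization $f = Q_1^{a_1} \cdots Q_r^{a_r}$ (where $a_1 + \cdots + a_r = s$, so $\Omega(f) = s$ and all prime factors lie in $I$), the number of ordered tuples $(P_1,\ldots,P_s)$ producing that product is the multinomial coefficient $\binom{s}{a_1,\ldots,a_r} = s!/(a_1!\cdots a_r!)$. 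By the definition $\nu(P^a) = 1/a!$ and the multiplicativity of $\nu$, this equals $s!\,\nu(f)$, which yields the stated identity.

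For the second identity, I would use $\Re P_I = \tfrac{1}{2}(P_I + \overline{P_I})$ together with the binomial theorem:
\[
(\Re P_I)^s = \frac{1}{2^s}\sum_{k=0}^{s}\binom{s}{k} P_I^{k}\,\overline{P_I}^{\,s-k}.
\]
Applying the first identity to $P_I^k$ and, separately (with $\overline{a}$ replacing $a$), to $\overline{P_I}^{\,s-k}$, the prefactor $\binom{s}{k} k!(s-k)! = s!$ is independent of $k$, so the double sum collapses to
\[
(\Re P_I)^s = \frac{s!}{2^s} \sum_{k=0}^{s}\ \sum_{\substack{P\mid f \Rightarrow P \in I \\ \Omega(f)=k}}\ \sum_{\substack{P\mid h \Rightarrow P \in I \\ \Omega(h)=s-k}} a(f)\overline{a(h)}\,\nu(f)\,\nu(h),
\]
which is precisely the claimed formula after reindexing by the pair $(f,h)$ subject to $\Omega(fh)=s$ and all prime factors of $fh$ lying in $I$.

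There is no real obstacle here: the lemma is a purely combinatorial restatement of the multinomial expansion. The only point worth double-checking is the bookkeeping between the multinomial coefficient and $s!\,\nu(f)$, and the fact that extending $\nu$ multiplicatively (not completely multiplicatively) is what produces the correct $1/(a_1!\cdots a_r!)$ factor; this is consistent with the definition given just before the lemma.
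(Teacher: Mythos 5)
Your proof is correct and follows essentially the same route as the paper: expand by the multinomial theorem, group ordered prime tuples by their product to obtain the factor $s!\,\nu(f)$, and for the second identity write $\Re P_I = \tfrac12(P_I+\overline{P_I})$, apply the binomial theorem, and use $\binom{s}{k}k!(s-k)!=s!$ to collapse the sum. No gaps; the bookkeeping between the multinomial coefficient and $s!\,\nu(f)$ is exactly as in the paper's argument.
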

\begin{proof}
We have
$$P_I^s=\sum_{\substack{P|f \Rightarrow P \in I\\ \Omega(f)=s}} a(f)  \sum_{P_1 \ldots P_s = f}1.$$
Note that if $f= Q_1^{\alpha_1} \ldots Q_r^{\alpha_r}$ then $s= \alpha_1+\ldots+\alpha_r$, and
$$ \sum_{P_1 \ldots P_s = f}1 = {s\choose \alpha_1} {s-\alpha_1 \choose \alpha_2} \ldots {s-\alpha_1-\ldots-\alpha_{r-1} \choose \alpha_r}= \frac{s!}{\alpha_1! \ldots \alpha_r!}  = s! \, \nu(f),$$ 
 so
$$P_I^s = s! \sum_{\substack{P|f \Rightarrow P \in I \\ \Omega(f)=s}} a(f) \nu(f)  .$$
We also have
\begin{align*}
 \Big(  \Re P_{I}\Big)^s &= \frac{1}{2^s} \sum_{r=0}^s {s \choose r} \sum_{\substack{P|f h  \Rightarrow P \in I_j \\ \Omega(f)=r \\ \Omega(h)=s-r}} a(f) \overline{a(h)}  \sum_{f = P_1 \ldots P_r} 1 \sum_{h= P_1 \ldots P_{s-r}} 1 \nonumber  \\
 &= \frac{1}{2^s} \sum_{r=0}^s {s \choose r} \sum_{\substack{P|f h \Rightarrow P \in I_j \\ \Omega(f)=r \\ \Omega(h)=s-r}}  a(f) a(h) r! \nu(f) (s-r)! \nu(h)  \nonumber \\
 &= \frac{ s!}{2^s} \sum_{\substack{P|f h  \Rightarrow P \in I_j \\ \Omega(f h )=s}} a(f) \overline{a(h)}  \nu(f) \nu(h). 
\end{align*}
\end{proof}

For $j \leq \JJ$, and for any real number $\kk \neq 0$, let
\begin{equation}
 D_{j, \kk} (\chi) = \prod_{r=0}^j (1+e^{-\ell_r/2})E_{\ell_r}( \kk \Re P_{I_r}(\chi;j)).
 \label{d_j}
 \end{equation}
We remark that the weights are $a(\cdot; j)$  for all intervals $I_0, \dots, I_j$ in the formula for $D_{j, \kk} (\chi)$.
 Note that we have
 \begin{eqnarray} \label{approximation-j}
 E_{\ell_r}( \kk \Re P_{I_r}(\chi;j))&= & \sum_{s \leq \ell_r} \frac{ (\kk \Re P_{I_r}(\chi;j))^s}{s!}\nonumber  \\ &=& \sum_{\substack{P|fh  \Rightarrow P \in I_r \\ \Omega(f h)\leq \ell_r}} \frac{(\kk/2)^{\Omega(f h)} a(f; j)a(h;j) \chi(f) \overline{\chi}(h) \nu(f) \nu(h)}{\sqrt{|f h |}},
 \label{elr}
 \end{eqnarray}
 where we have used Lemma \ref{combinatorics}. 

We also define the following term, which corresponds to the sum over the square of primes in equation \eqref{chandee}, 
\begin{eqnarray} \label{S_{j,\kk}}
S_{j,\kk}(\chi)=\exp  \left(\kk
\Re\left( \sum_{\deg(P) \leq (g+2) \theta_j/2} \frac{   \chi(P)b(P;j)}{|P|}\right)\right),\end{eqnarray}
where \begin{equation} \label{def-bp}
b(P;j)=\frac{1}{2|P|^{\frac{2}{(g+2)\theta_j\log q}}}\left(1-\frac{2\deg P}{(g+2) \theta_j}\right).\end{equation}

 \begin{prop} \label{prop-cases}
  Let $\kk$ be positive.
 For each $\chi$ a primitive cubic character of genus $g$, we either have
 $$ \max_{0\leq u \leq \JJ} |\Re P_{I_0}(\chi;u)| > \frac{\ell_0}{\kk e^2},$$ or
\begin{align*}
|{\textstyle L(\frac{1}{2},\chi)}|^\kk \leq&\exp(k(1/\theta_\JJ+\eta)) D_{\JJ,\kk}(\chi) S_{\JJ,\kk}(\chi)\\
 &+\sum_{\substack{0\leq j \leq \JJ-1 \\ j<u\leq \JJ}} \exp(\kk (1/\theta_j+\eta)) D_{j,\kk}(\chi)  S_{j,\kk}(\chi)\Big(\frac{e^2 \kk \Re P_{I_{j+1}}(\chi;u)}{\ell_{j+1}} \Big)^{s_{j+1}},
 \end{align*}
 for any $s_j$ even integers and $\eta=1.676972\dots$.
 \end{prop}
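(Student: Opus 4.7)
The starting point is Lemma~\ref{like-chandee}, which for any $N\leq g+2$ bounds $\log|{\textstyle L(\frac{1}{2},\chi)}|$ by a short sum over prime powers plus a ``tail'' of size $(g+2)/N$. Isolating the contributions of primes, prime squares, and higher prime powers, and absorbing the latter into the absolute constant $\eta$ via the Prime Polynomial Theorem \eqref{ppt-bound} exactly as in the derivation of \eqref{chandee}, and then specializing $N=(g+2)\theta_u$ for an arbitrary $u\in\{0,\ldots,\JJ\}$, the prime sum becomes $\kk\sum_{r=0}^{u}\Re P_{I_r}(\chi;u)$, the prime-square sum becomes $\kk\log S_{u,\kk}(\chi)$, and the tail contributes $\kk/\theta_u+\kk\eta$. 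Exponentiating yields, for every such $u$,
\begin{equation}\label{base-bound-plan}
|{\textstyle L(\tfrac{1}{2},\chi)}|^{\kk}\;\leq\;\exp\!\left(\kk/\theta_u+\kk\eta\right)\prod_{r=0}^{u}\exp\!\bigl(\kk\,\Re P_{I_r}(\chi;u)\bigr)\,S_{u,\kk}(\chi).
\end{equation}
The freedom to choose $u$ \emph{depending on $\chi$} is what drives the rest of the argument.

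Assume $\chi$ is not in the first alternative, i.e.\ $\max_{0\leq u\leq\JJ}|\Re P_{I_0}(\chi;u)|\leq \ell_0/(\kk e^{2})$. Let $j^{*}\in\{0,\ldots,\JJ\}$ be the largest integer with the property that $|\Re P_{I_r}(\chi;u)|\leq \ell_r/(\kk e^{2})$ for all $r\leq j^{*}$ and all $r\leq u\leq \JJ$; this is well-defined since the standing assumption handles $r=0$. If $j^{*}=\JJ$, apply \eqref{base-bound-plan} with $u=\JJ$: the one-sided hypothesis $\kk\,\Re P_{I_r}(\chi;\JJ)\leq \ell_r/e^{2}$ of \eqref{ineq_e} is satisfied for every $r\leq \JJ$, so replacing each factor of the product by $(1+e^{-\ell_r/2})E_{\ell_r}(\kk\,\Re P_{I_r}(\chi;\JJ))$ converts the product into $D_{\JJ,\kk}(\chi)$, which is exactly the first summand in the claimed upper bound.

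If instead $j^{*}\leq \JJ-1$, the maximality of $j^{*}$ produces some $u^{*}$ with $j^{*}<u^{*}\leq \JJ$ for which $|\Re P_{I_{j^{*}+1}}(\chi;u^{*})|>\ell_{j^{*}+1}/(\kk e^{2})$. I now apply \eqref{base-bound-plan} with $u=j^{*}$; by the defining property of $j^{*}$ (specialized at $u=j^{*}$), the hypothesis of \eqref{ineq_e} holds for every factor of the product, and the same factor-by-factor replacement gives
\[
|{\textstyle L(\tfrac{1}{2},\chi)}|^{\kk}\;\leq\;\exp\!\left(\kk/\theta_{j^{*}}+\kk\eta\right)D_{j^{*},\kk}(\chi)\,S_{j^{*},\kk}(\chi).
\]
Since $s_{j^{*}+1}$ is even and $|e^{2}\kk\,\Re P_{I_{j^{*}+1}}(\chi;u^{*})/\ell_{j^{*}+1}|>1$, the quantity $\bigl(e^{2}\kk\,\Re P_{I_{j^{*}+1}}(\chi;u^{*})/\ell_{j^{*}+1}\bigr)^{s_{j^{*}+1}}$ is at least $1$, so inserting it on the right only enlarges the bound. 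The resulting estimate is exactly the $(j,u)=(j^{*},u^{*})$ summand of the second expression in the Proposition, and since every summand there is non-negative (even powers of a real number times non-negative prefactors), the full sum is an upper bound for the single summand just produced.

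The one point that needs care is the bookkeeping of which weight $a(P;u)$ appears in each $P_{I_r}$: the sum in \eqref{base-bound-plan} runs only over $r\leq u$, because for $r>u$ the interval $I_r$ lies beyond the truncation threshold $(g+2)\theta_u$ and $a(P;u)$ turns non-positive. This is exactly what forces the range $j<u\leq \JJ$ appearing in the error sum of the Proposition, and is really the only place where one has to be careful. Apart from that, the argument is a direct transcription of the Soundararajan--Harper and Lester--Radziwi\l\l\ truncation scheme to cubic characters over $\F_q[T]$; no new analytic input is needed beyond Lemma~\ref{like-chandee} and the elementary inequality \eqref{ineq_e}.
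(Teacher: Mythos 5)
Your proof is correct and follows essentially the same route as the paper: the same case split (your maximal $j^{*}$ is exactly the paper's trichotomy built from the sets $\mathcal{T}_r$), the same application of Lemma \ref{like-chandee} with $N=(g+2)\theta_j$, the replacement $e^{t}\leq(1+e^{-\ell_r/2})E_{\ell_r}(t)$ via \eqref{ineq_e}, the insertion of the even power $\bigl(e^{2}\kk\,\Re P_{I_{j+1}}(\chi;u)/\ell_{j+1}\bigr)^{s_{j+1}}\geq 1$, and the final summation over all pairs $(j,u)$ to obtain a $\chi$-independent majorant. The only quibble is cosmetic: the range $j<u\leq \JJ$ comes from the definition of $\mathcal{T}_{j+1}$ (the offending $u$ satisfies $u\geq j+1$), not from the truncation bookkeeping of the weights $a(\cdot;u)$, but this does not affect the argument.
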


 \begin{proof}
 For $r = 0, 1, \ldots, \JJ$, let
 \begin{equation}
 \label{tr}
 \mathcal{T}_r = \Big\{ \chi \text{ primitive cubic, genus}(\chi)=g \, :  \, \max_{r \leq u \leq \JJ} | \Re P_{I_r}(\chi;u)| \leq \frac{\ell_r}{\kk e^2} \Big\}.
 \end{equation}
 
 For each $\chi$ we have one of the following:
 \begin{enumerate}
 \item $ \chi \notin \mathcal{T}_0$
 \item $\chi \in \mathcal{T}_r$ for each $r \leq \JJ$
 \item There exists a $j <\JJ$ such that $\chi \in \mathcal{T}_r$ for $r \leq j$ and $\chi \notin \mathcal{T}_{j+1}$.
 \end{enumerate}
 If the first condition is satisfied, then we are done. If not, assume that condition $(2)$ is satisfied. 
 Then in equation \eqref{ineq} we take $N= (g+2) \theta_\JJ$, and we get 
 \begin{align*}
 |{\textstyle L(\frac{1}{2},\chi)}|^{\kk}  \leq &\exp(\kk(1/\theta_J+\eta)) \prod_{j=0}^\JJ \exp ( \kk \Re P_{I_j}(\chi;\JJ)) S_{\JJ,\kk}(\chi)\\
 \leq & \exp(\kk(1/\theta_J+\eta)) D_{\JJ,\kk}(\chi) S_{\JJ,\kk}(\chi).
 \end{align*}

 Now assume that condition $(3)$ holds. {Then, there exists $j = j(\chi)$ and $u=u(\chi) > j=j(\chi)$   
 such that $|\Re P_{I_{j+1}} (\chi;u)| > \ell_{j+1}/(\kk e^2)$.}
We then have
 $$1 \leq \Big(  \frac{ \kk e^2 \Re P_{I_{j+1}}(\chi;u)}{\ell_{j+1}}\Big)^{s_{j+1}},$$ for any even integer $s_{j+1}$, and taking  $N= (g+2) \theta_j$   in equation \eqref{ineq}, we get 
 \begin{eqnarray*} |{\textstyle L(\frac{1}{2},\chi)}|^{\kk} &\leq& \exp(\kk (1/\theta_j+\eta)) D_{j,\kk}(\chi) 
 S_{j,\kk}(\chi)
 \Big(\frac{e^2 \kk \Re P_{I_{j+1}}(\chi;u)}{\ell_{j+1}} \Big)^{s_{j+1}}.
\end{eqnarray*}
Then, if $(3)$ holds, we have
\begin{equation}\label{eq:sum}
|{\textstyle L(\frac{1}{2},\chi)}|^\kk \leq \sum_{\substack{0\leq j \leq \JJ-1 \\ j<u\leq \JJ}} \exp(\kk(1/\theta_j+\eta)) D_{j,\kk}(\chi)  S_{j,\kk}(\chi)\Big(\frac{e^2 \kk \Re P_{I_{j+1}}(\chi;u)}{\ell_{j+1}} \Big)^{s_{j+1}},
 \end{equation}
where in the bound of the right-hand side, $j$ and $u$ are independent of $\chi$.
 \end{proof}

 \begin{rem}
  In the previous proof, we could have written $\max_{\substack{0\leq j \leq \JJ-1 \\ j<u\leq \JJ}}$ instead of $\sum_{\substack{0\leq j \leq \JJ-1 \\ j<u\leq \JJ}}$ in the bound \eqref{eq:sum}. However, this maximum depends on $\chi$, and in future applications of Proposition \ref{prop-cases} we  will need the right-hand side to be independent of $\chi$ so that we can exchange the bound with a sum over all the possible $\chi$. 
 \end{rem}

 \subsection{The mollifier} \label{mollifier}
 Let $\kappa$ be a positive real number, and we define 
 $${\textstyle M_j(\chi;\frac{1}{\kappa}) }:={\textstyle E_{\ell_j}(-\frac{1}{\kappa}P_{I_j}(\chi;\JJ))}= \sum_{\substack{P |f \Rightarrow P \in I_j \\ \Omega(f) \leq \ell_j}} \frac{a(f;\JJ)  \chi(f)  \lambda(f)\nu(f)}{\kappa^{\Omega(f)}\sqrt{|f|}},$$ where $\lambda(f)$ is the Liouville function. 
  We also define
 $${\textstyle M(\chi;\frac{1}{\kappa})} = \prod_{j=0}^\JJ {\textstyle M_j(\chi;\frac{1}{\kappa}) }.$$

 We have for any positive integer $n$
 $${\textstyle M_j(\chi;\frac{1}{\kappa}) }^{n}  = \sum_{\substack{P | f \Rightarrow P \in I_j \\ \Omega(f) \leq n \ell_j}} \frac{a(f;\JJ) \chi(f) \lambda(f)}{\kappa^{\Omega(f)}\sqrt{|f|}} \nu_{n}(f;\ell_j),$$ where
 $$ \nu_{n}(f;\ell_j) = \sum_{\substack{f= f_1 \cdot \ldots \cdot f_{n} \\ \Omega(f_1) \leq \ell_j,\ldots \Omega(f_{n}) \leq \ell_j}} \nu(f_1) \ldots \nu(f_{n}).$$
 Then, taking $\kappa$ such that $\kk \kappa $ is an even integer, 
 \begin{equation} {\textstyle |M_j(\chi;\frac{1}{\kappa})| }^{\kk \kappa} = \sum_{\substack{P | f_j h_j \Rightarrow P \in I_j \\ \Omega(f_j) \leq \frac{\kk \kappa}{2} \ell_j, \Omega(h_j) \leq \frac{\kk \kappa  }{2} \ell_j}} \frac{ a(f_j;\JJ) a(h_j;\JJ) \chi(f_j) \overline{\chi}(h_j)  \lambda(f_j) \lambda(h_j)}{{\kappa^{\Omega(f_j h_j)} }\sqrt{|f_jh_j|}} \nu_{\kk \kappa /2}(f_j; \ell_j) \nu_{\kk \kappa /2}(h_j;\ell_j).
 \label{mj}
 \end{equation}
We remark that the mollifier should be a Dirichlet polynomial approximating  $|\textstyle{L(\frac{1}{2}, \chi)|}^{-1}$. Indeed, taking $\kappa=1$ and $\kk$ an even integer in the definition above, we see that $|M_j (\chi; 1)|^\kk$ is a Dirichlet polynomial on the interval $I_j$ in view of \eqref{chandee}, approximating the exponential with the finite sum $E_{\ell_j}$ on each interval
(we do not claim that the finite sum is an upper bound, but it is close enough to work on average), i.e.  it is very close to \eqref{approximation-j} (with the added  Liouville function taking care of the cancellation).
 Taking $\kappa \neq 1$ allows us to mollify all moments, not just in the case when $\kk$ is an even integer, by taking the mollifier to be
${\textstyle |M_j(\chi;\frac{1}{\kappa})| }^{\kk \kappa }$ on each interval $I_j$ as defined above. We remark that for any $\kappa$, the term with $f_j h_j = P$ in the Dirichlet series
\eqref{mj} is
$$
- \frac{\kk}{2} \frac{a(P, \JJ) \left( \chi(P) + \overline{\chi}(P) \right)}{|P|^{1/2}}
$$
which is independent of $\kappa$, and of the correct size to approximate 
$|{\textstyle L(\frac{1}{2}, \chi)} |^{-\kk}$.

 \subsection{Averages over the family} 
  
 \begin{lem} \label{mt} \label{LZ-Lemma4.1s}
Let $I_0,I_1,\ldots, I_\JJ$ be intervals such that $I_0=(0,(g+2) \theta_0], I_1=((g+2) \theta_0,(g+2)\theta_1],\ldots I_\JJ=((g+2) \theta_{\JJ-1},(g+2) \theta_\JJ]$. Let $B, C, b$, and $c$ be any functions supported on $\mathbb{F}_q[T]$. Suppose $s_j$ and $\ell_{j}$ are nonnegative integers for $j=0,\dots,\JJ$ such that 
\begin{equation}\label{nicecondition}
2 \sum_{j=0}^\JJ \theta_j s_{j}+ 3 \sum_{j=0}^\JJ \theta_j \ell_{j} \leq 1/2.
\end{equation}
Then we have
 \begin{align*}
 &\sum_{R \in \mathcal{M}_{q^2,g/2+1}} \prod_{j=0}^\JJ \sum_{\substack{ P\mid F_jH_j \Rightarrow P\in I_j\\ P|  f_j h_j \Rightarrow P \in I_j \\
 \Omega(F_jH_j) \leq  s_j \\ \Omega(f_{j}) \leq \ell_{j} \\ \Omega(h_{j}) \leq \ell_{j}}} B(F_j) C(H_j) b(f_j) c(h_j) \chi_R(F_j H_j^2f_jh_j^2) \\
 =& q^{g+2} \prod_{j=0}^\JJ \sum_{\substack{ P\mid F_jH_j \Rightarrow P\in I_j\\ P|  f_j h_j \Rightarrow P \in I_j \\
 \Omega(F_jH_j) \leq  s_j \\ \Omega(f_{j}) \leq \ell_{j} \\ \Omega(h_{j}) \leq \ell_{j}\\F_jH_j^2 f_jh_j^2 = \tinycube}}  {B(F_j) C(H_j) b(f_{j})c(h_{j})} \frac{\phi_{q^2}(F_j H_j^2f_jh_j^2)}{|F_j H_j^2f_jh_j^2|_{q^2}}.
 \end{align*}
\end{lem}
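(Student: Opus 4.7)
The plan is to bring the sum over $R$ inside the nested products and evaluate the character average
\[ S(N) \;:=\; \sum_{R\in\mathcal{M}_{q^2,g/2+1}}\chi_R(N),\qquad N:=\prod_{j=0}^\JJ F_jH_j^2 f_j h_j^2. \]
Because the intervals $I_0,\dots,I_\JJ$ are pairwise disjoint, the prime supports of the factors $F_jH_j^2f_jh_j^2$ are disjoint across different $j$. By complete multiplicativity of $\chi_R$, the product of characters collapses to $\chi_R(N)$; moreover both the cube condition $N=\tinycube$ and the arithmetic factor $\phi_{q^2}(\cdot)/|\cdot|_{q^2}$ appearing on the right-hand side factor over the index $j$. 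Hence the whole lemma reduces to producing an exact formula for $S(N)$ matching the claim.

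Since $q$ is odd, $q^{2}\equiv 1\pmod 6$, so cubic reciprocity \eqref{cubic-reciprocity} applies over $\F_{q^2}[T]$ and gives $\chi_R(N)=\chi_N(R)$ whenever $\gcd(R,N)=1$, while both sides vanish otherwise. I will split into two cases. If $N$ is a perfect cube, writing $N=\prod_i P_i^{e_i}$ with $e_i\equiv 0\pmod 3$ shows that $\chi_N$ is the principal character modulo $\mathrm{rad}(N)$, so $S(N)$ counts the $R\in\mathcal{M}_{q^2,g/2+1}$ coprime to $N$; a straightforward inclusion--exclusion then gives $S(N)=q^{g+2}\,\phi_{q^2}(N)/|N|_{q^2}$, provided $g/2+1\geq\deg(\mathrm{rad}(N))$. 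If $N$ is not a cube, then $\chi_N$ is a nontrivial character modulo $\mathrm{rad}(N)$, and its $L$-function is a polynomial in $u=q^{-s}$ of degree at most $\deg(\mathrm{rad}(N))-1$, so the coefficient of $u^{g/2+1}$, which equals $S(N)$, vanishes as long as $g/2+1\geq\deg(\mathrm{rad}(N))$.

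Both cases therefore hinge on the degree bound $\deg N\leq g/2+1$. Every prime appearing in $F_jH_j^2f_jh_j^2$ has degree at most $(g+2)\theta_j$, and the constraints $\Omega(F_jH_j)\leq s_j$ and $\Omega(f_j),\Omega(h_j)\leq\ell_j$ combine to give
\[ \deg\bigl(F_jH_j^2f_jh_j^2\bigr)\;\leq\;(g+2)\,\theta_j\,(2s_j+3\ell_j). \]
Summing over $j$ and invoking hypothesis \eqref{nicecondition} yields $\deg N\leq (g+2)/2=g/2+1$, exactly what is needed. Restoring the product over $j$ in Case~1, together with the multiplicativity of $\phi_{q^2}/|\cdot|_{q^2}$ on the disjoint supports, delivers the claimed identity. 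The main subtlety is that this identity is \emph{exact} rather than asymptotic: the condition \eqref{nicecondition} is calibrated precisely so that the inclusion--exclusion in Case~1 and the vanishing of the $L$-polynomial coefficient in Case~2 both produce no error term, and carrying this out with no slack for the full range of $(s_j,\ell_j)$ will be the most delicate point.
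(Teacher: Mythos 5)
Your proof is correct and follows essentially the same route as the paper: expand the product, use cubic reciprocity over $\F_{q^2}[T]$, kill the non-cube terms via the degree bound coming from \eqref{nicecondition} (your $L$-polynomial coefficient argument is just the orthogonality statement the paper invokes), and evaluate the cube terms by inclusion--exclusion to obtain $q^{g+2}\phi_{q^2}(\cdot)/|\cdot|_{q^2}$. The only cosmetic difference is that the paper applies $\chi_R(c)=1$ for $c$ a cube coprime to $R$ directly rather than passing through reciprocity in that case; nothing of substance changes.
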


\kommentar{\ccom{I would like to change the notation to avoid things like $\chi_F(F_i)$. Maybe $\chi_D(F_i)$? Or it looks too quadratic?}
\acom{I think $\chi_D$ is ok. Or maybe $\chi_R$.} \mcom{$R$ for the win! Please remove comments if you agree with the changes} \acom{I agree with the changes. Another question: in the statement we say: Fix $0 \leq r \leq \JJ$. I suppose that's forgotten from before? We don't seem to use anything about $r$.} \mcom{We should delete it. It was there because of the applications in the proof of Lemma 4.1, we were trying to write the most general version, in the end we decided to write Remark 3.6 instead.}}
\begin{rem}
 We will also use the above lemma in slightly different cases. We will allow the following variations: 
 \begin{itemize}
  \item That the condition  $\Omega(F_jH_j) \leq  s_j $ be replaced by $\Omega(F_jH_j) =  s_j$. 
\item That the condition $ P\mid F_jH_j \Rightarrow P\in I_j$ be replaced by  $P\mid F_jH_j \Rightarrow \deg(P)=m_j$ where $m_j$ is a fixed element in $I_j$. 
  \end{itemize}
  The above variations will happen for some values of $j$ and may happen both at the same time.  In all cases, the results are analogous. 
  
\end{rem}

 \begin{proof}
 Expanding the left-hand side of the identity above and exchanging the order of summation, we need to evaluate sums of the form
 $$ \sum_{R \in \mathcal{M}_{q^2,g/2+1}} \chi_R\left(\prod_{j=0}^\JJ F_jH_j^2f_jh_j^2\right)  =
\sum_{R \in \mathcal{M}_{q^2,g/2+1}} \chi_{\prod_{j=0}^\JJ F_jH_j^2f_jh_j^2}(R),$$
since $q^2 \equiv 1 \pmod 6$, and we have cubic reciprocity over $\F_{q^2}[T]$.
 If $\prod_{j=0}^\JJ F_jH_j^2f_jh_j^2 \neq \cube$, then 
 \[\sum_{j=0}^\JJ\deg(F_jH_j^2f_jh_j^2)\leq   (g+2)\left( 2\sum_{j=0}^\JJ\theta_j s_j+3\sum_{j=0}^\JJ  \theta_j  \ell_j\right)\leq (g+2)/2 = \deg(R)\]
and  the character sum above vanishes. We are then left with the contribution of those terms with $\prod_{j=0}^\JJ F_jH_j^2f_jh_j^2 = \cube$. Since $F_jH_j^2f_jh_j^2$ is only divisible by primes in $I_j$ and the intervals $I_j$ are disjoint, it follows that we must have $F_jH_j^2f_jh_j^2 = \cube$ for each $j\leq \JJ $. 
For any $c \in \F_{q^2}[T]$, $c = \cube$,  and $\deg{c} \leq g/2 + 1$, we have that
$$
 \sum_{R \in \mathcal{M}_{q^2,g/2+1}} \chi_R( c) = \sum_{\substack{d \in \mathcal{M}_{q^2} \\ d \mid c}} \mu(d)  \sum_{\substack {R \in \mathcal{M}_{q^2,g/2+1}\\ d \mid R}} 1
= q^{g + 2} \sum_{\substack{d \in \mathcal{M}_{q^2} \\ d \mid c}} \mu(d) q^{-2 \deg{d}} = q^{g+2} \frac{\phi_{q^2}(c)}{|c|_{q^2}},
$$
and using the above with $c = \prod_{j=0}^\JJ  F_jH_j^2f_jh_j^2$, the conclusion follows.
 \end{proof}

 \subsection{Proof of Theorem \ref{moll_ub}}

\begin{proof}
 We write
 \begin{equation} \label{2-terms}
   \sum_{\chi\in \mathcal{C}(g) } |{\textstyle L(\frac{1}{2},\chi) }|^{\kk} |{\textstyle M(\chi;\frac{1}{\kappa})}|^{\kk \kappa}   =  \sum_{\chi\in \mathcal{C}(g)\cap \mathcal{T}_0} |{\textstyle L(\frac{1}{2},\chi)}|^{\kk} |{\textstyle M(\chi;\frac{1}{\kappa})}|^{\kk \kappa} +  \sum_{\chi \in \mathcal{C}(g)\setminus \mathcal{T}_0} |{\textstyle L(\frac{1}{2},\chi) }|^{\kk} |{\textstyle M(\chi;\frac{1}{\kappa})}|^{\kk \kappa},
   \end{equation}
   where $\mathcal{T}_r$ is defined in \eqref{tr}. 
 We first focus on the second term above. Since $\chi \not\in \mathcal{T}_0$, there exists $u=u(\chi)$ such that $0 \leq u \leq \JJ $ and
$$ |\Re P_{I_0}(\chi;u)| > \frac{\ell_0}{\kk e^2}. $$ 
Choosing  $s_0$ even and multiplying by $ \Big(\frac{ke^2 \Re P_{I_0}(\chi;u)}{\ell_0} \Big)^{s_0} >1$, 
 completing the sum for all $\chi \in \mathcal{C}(g)$ (since all the involved terms are positive), and applying Cauchy--Schwarz, we obtain,
 \begin{align} \nonumber
 \sum_{u=0}^\JJ  \sum_{\substack{\chi \in \mathcal{C}(g)\setminus \mathcal{T}_0\\ u(\chi)=u}}& |{\textstyle L(\frac{1}{2},\chi)}|^{\kk}  |{\textstyle M(\chi;\frac{1}{\kappa})}|^{ \kk \kappa}
 \leq \sum_{u=0}^\JJ \sum_{\chi\in \mathcal{C}(g)} |{\textstyle L(\frac{1}{2},\chi)}|^{\kk}  |{\textstyle M(\chi;\frac{1}{\kappa})}|^{\kk \kappa} \Big( \frac{\kk e^2 \Re P_{I_0}(\chi;u)}{\ell_0} \Big)^{s_0} \\ \label{t2}
  & \leq \JJ ^{1/2} \Big( \sum_{\chi \in \mathcal{C}(g)} |{\textstyle L(\frac{1}{2},\chi)}|^{2 \kk} \Big)^{1/2} \Big( \sum_{u=0}^\JJ  \sum_{\chi \in \mathcal{C}(g)} \Big( \frac{ \kk e^2}{\ell_0} \Big)^{2s_0} |{\textstyle M(\chi;\frac{1}{\kappa})}|^{2\kk \kappa } ( \Re P_{I_0}(\chi;u))^{2s_0}   \Big)^{1/2},
 \end{align}
where we choose $s_0$ to be an even integer such that $$a \kk \kappa  \ell_0 \leq s_0 \leq \frac{1}{d \theta_0},$$ with $a$ and $d$ as in Lemma \ref{estimates}.  

 For the first sum in \eqref{t2}, we have an upper bound of size $q^{g/2} g^{O(1)}$ using Lemma \ref{lemma_sound}.
 We aim to obtain some saving from the second sum above. Using Lemma \ref{estimates}(i) and Stirling's formula, we get for $c = 2-4/a$ 
 \begin{eqnarray*}
&&\Big( \sum_{u=0}^\JJ  \sum_{\chi\in \mathcal{C}(g)} \Big( \frac{ \kk e^2}{\ell_0} \Big)^{2s_0} |{\textstyle M(\chi;\frac{1}{\kappa})}|^{2\kk \kappa } ( \Re P_{I_0}(\chi;u))^{2s_0} \Big)^{1/2}  \\ &&\hspace{1cm} \ll q^{g/2} g^{O(1)}  \Big( \frac{\kk e^{1+c/6} \theta_0^{b} s_0^{1-c/6} 5^{c/6}  }{2c^{c/6}} \Big)^{ s_0}   (\log g)^{s_0/2}\\  &&\hspace{1cm} \ll
\frac{q^{g/2}}{q^{(\log g)^\delta}} \ll \frac{q^{g/2}}{g^A} 
\end{eqnarray*}
for $\delta > 1$ and all $A \geq 1$, 
where the last line is obtained by setting $s_0 = 2[a \kk  \kappa \ell_0/2]+2$. 
We also used the bound \eqref{hr} for $H(0)$ from Lemma  \ref{estimates}(i) in the second line. 
Replacing the two estimates in \eqref{t2}, we get that
$$
  \sum_{\chi \in \mathcal{C}(g)\setminus \mathcal{T}_0} |{\textstyle L(\frac{1}{2},\chi) }|^{\kk} |{\textstyle M(\chi;\frac{1}{\kappa})}|^{\kk \kappa}
  = o \left( q^{g} \right),
$$
and the sum over the characters $\chi \not\in \mathcal{T}_0$ does not contribute to the sharp upper bound.


\kommentar{ ***keep temporarily****
\begin{align*}
\eqref{t2}  & \ll q^g g^{O(1)}  \Big( \frac{\kk e^{1+c/6} \theta_0^{b} s_0^{1-c/6} 5^{c/6}  }{2c^{c/6}} \Big)^{ s_0}   (\log g)^{s_0/2} \ll
\frac{q^g}{q^{(\log g)^\delta}} \ll \frac{q^g}{g^A} 
\end{align*}
for $\delta > 1$ and all $A \geq 1$, and where $c=2-4/a$. 
\mcom{I get this:
\begin{eqnarray*}
&&\left( \sum_{u=0}^\JJ  \sum_{\chi \in \mathcal{C}(g)}\Big( \frac{ \kk e^2}{\ell_0} \Big)^{2s_0} |{\textstyle M(\chi;\frac{1}{\kappa})}|^{2\kk \kappa } ( \Re P_{I_0}(\chi;u))^{2s_0} \right)^{1/2}  \\ &&\hspace{1cm} \ll q^{g/2} g^{O(1)}  \Big( \frac{\kk e^{1+c/6} \theta_0^{b} s_0^{1-c/6} 5^{c/6}  }{2c^{c/6}} \Big)^{ s_0}   (\log g)^{s_0/2}\\  &&\hspace{1cm} \ll
\frac{q^{g/2}}{q^{(\log g)^\delta}} \ll \frac{q^{g/2}}{g^A} 
\end{eqnarray*}}
\acom{OK, so we have the same.}
\ccom{Which bound are we using on $H(0)$?}\\
\ccom{I get:
$$2 q^{g+2} e^{k^2 J} H(0) (\log{g})^{s_0} \frac{\sqrt{6}}{\sqrt{c}} \left( \frac{k e^2\theta_0^b}{2} \right)^{2s_0} \left( \frac{s_0^{2-c/3} 5^{c/3}}{e^{2-c/3} c^{c/3}} \right)^{s_0}.$$ At some point, we changed $s_0$ for $2s_0$, maybe I am getting confused by that?}
\acom{We're using \eqref{hr} for $H(0)$, and I incorporate it in the $g^{O(1)}$. I think we actually have the same bound, but for the first line of the equation I meant to write that $\eqref{t2} \ll$. Note that I wrote $\ll$ so I am ignoring the terms with $2, \sqrt{6}$ etc. So the bound I wrote takes the square-root into account. }
\ccom{OK, thanks, I will clean that. I keep the constants because we use them later for the explicit bound, maybe we can record them there also on some intermediate step to help the reader.}
Then, the first sum of  \eqref{2-terms} does not contribute to the upper bound.
***************** end of kommentar}

For the first sum of \eqref{2-terms} over the characters $\chi \in \mathcal{T}_0$, we use Proposition \ref{prop-cases}. As before, we first bound the sum by the completed sum over all $\chi \in \mathcal{C}(g)$ since all the extra terms are positive. We have
 \begin{align}
& \sum_{\chi \in \mathcal{C}(g)\cap \mathcal{T}_0} |{\textstyle L(\frac{1}{2},\chi)}|^{\kk} |{\textstyle M(\chi;\frac{1}{\kappa})}|^{\kk \kappa} \nonumber \\
\leq &\exp \left( \kk \left(1/\theta_J + \eta \right) \right) \sum_{\chi\in \mathcal{C}(g)} D_{\JJ ,  \kk}(\chi)S_{\JJ ,  \kk}(\chi)|{\textstyle M(\chi;\frac{1}{\kappa})}|^{\kk \kappa} \nonumber \\  
&+\sum_{\substack{{0\leq j \leq \JJ -1}\\ j<u \leq \JJ }} \exp \left( \kk \left(1/\theta_j + \eta \right) \right)  \sum_{\chi\in \mathcal{C}(g)}   D_{j, \kk}(\chi)  S_{j, \kk}(\chi) \Big( \frac{ \kk e^2 \Re P_{I_{j+1}}(\chi;u)}{\ell_{j+1}} \Big)^{s_{j+1}} |{\textstyle M(\chi;\frac{1}{\kappa})}|^{ \kk \kappa }. \label{tb}
 \end{align}
 where $s_{j+1}$ is even. 

  Using Cauchy--Schwarz, we write
 \begin{equation}
  \sum_{\chi \in \mathcal{C}(g)} D_{\JJ ,  \kk}(\chi)S_{\JJ ,  \kk}(\chi)|{\textstyle M(\chi;\frac{1}{\kappa})}|^{\kk \kappa} \leq \Big(  \sum_{\chi \in \mathcal{C}(g)} D_{\JJ ,  \kk}(\chi)^2 |{\textstyle M(\chi;\frac{1}{\kappa})}|^{2\kk \kappa} \Big)^{1/2} \Big(
   \sum_{\chi \in \mathcal{C}(g)}S_{\JJ ,  \kk}(\chi)^2 \Big)^{1/2} ,
   \label{i1}
   \end{equation}
   and similarly 
 \begin{eqnarray} \nonumber
&&   \sum_{\chi \in \mathcal{C}(g)}D_{j,\kk}(\chi)S_{j,\kk}(\chi) \Big( \frac{ \kk e^2 \Re P_{I_{j+1}}(\chi;u)}{\ell_{j+1}} \Big)^{s_{j+1}} |{\textstyle M(\chi;\frac{1}{\kappa})}|^{\kk \kappa}\\
&& \leq \Big( \sum_{\chi \in \mathcal{C}(g)} D_{j,\kk}(\chi)^2 \Big( \frac{ \kk e^2 \Re P_{I_{j+1}}(\chi;u)}{\ell_{j+1}} \Big)^{2s_{j+1}}
 |{\textstyle M(\chi;\frac{1}{\kappa})}|^{2\kk \kappa} \Big)^{1/2} \Big(
   \sum_{\chi \in \mathcal{C}(g)}S_{j,\kk}(\chi)^2 \Big)^{1/2} .
   \label{i2}
   \end{eqnarray}

 To bound equation \eqref{i1},  we use Lemma \ref{estimates}(ii) and Lemma \ref{estimates-squares} which give
 \begin{align} \label{easy-part}
  \sum_{\chi \in \mathcal{C}(g)}D_{\JJ ,  \kk}(\chi)S_{\JJ ,  \kk}(\chi)|{\textstyle M(\chi;\frac{1}{\kappa})}|^{\kk \kappa} 
  \leq q^{g+2} \mathcal{D}_k^{1/2} \mathcal{S}_k^{1/2} \exp{(k^2)},
 \end{align}
 where the constants $\mathcal{D}_k,  \mathcal{S}_k$ come from Lemmas \ref{estimates}  and \ref{lemma-square-primes} respectively.
 
 Similarly, to bound \eqref{i2}, we use Lemmas \ref{estimates}(iii) and \ref{estimates-squares}. 
 When we bound the first term in \eqref{i2} with  Lemma \ref{estimates}(iii), we use Stirling's formula and note that the sum over primes is bounded by $\log( \theta_{j+1}/\theta_j)=1$. Now we pick $s_{j+1} = 2 [1/(2d\theta_{j+1})]$, and then when $g \to \infty$, we have
 
 \begin{align}
 & \sum_{\chi \in \mathcal{C}(g)}D_{j,\kk}(\chi)^2  \Big( \frac{ \kk e^2 \Re P_{I_{j+1}}(\chi;u)}{\ell_{j+1}} \Big)^{2s_{j+1}} |{\textstyle M(\chi;\frac{1}{\kappa})}|^{2\kk \kappa} \nonumber \\
  &\leq_\varepsilon  \frac{2\sqrt{6}}{\sqrt{c}}  q^{g+2} \mathcal{D}_k \exp (3\kk^2+2\kk ) e^{k^2(J-j-1)} \left( \frac{k^2 e^{2+c/3} \theta_{j+1}^{2b} s_{j+1}^{2-c/3} 5^{c/3}}
  {4 c^{c/3} }    \right)^{s_{j+1}} \nonumber \\
  &=\frac{2 \sqrt{6}}{\sqrt{c}} q^{g+2} \mathcal{D}_k \exp (3\kk^2+2\kk )
  \times \exp \Big(\kk^2(\JJ -j-1)+ \frac{ \alpha \log \theta_{j+1}}{ d \theta_{j+1}}+ \frac{\log F}{d \theta_{j+1}} \Big), \label{dj}
 \end{align}
 where $$\alpha=2b-2+\frac{c}{3}, \, F= \frac{\kk^2e^{2+c/3}5^{c/3}}{4d^{2-c/3}c^{c/3}},$$ with $c=2-4/a$ and $a$ and $d$ as in Lemma \ref{estimates}.
 
We now replace in \eqref{i2}, and using Lemma \ref{estimates-squares}, the sum over $j,u,\chi$  in \eqref{tb}, is bounded by
$$
\exp(k \eta) \left( \frac{2 \sqrt{6}}{\sqrt{c}} \mathcal{D}_k \exp (3\kk^2+2\kk ) \mathcal{S}_k  \right)^{1/2} \;C_J \;q^{g+2},
$$
where
      \begin{align}
       C_J := \sum_{\substack{0 \leq j \leq \JJ -1 \\ j<u \leq \JJ }} & \exp \Big(\frac{\kk}{\theta_j} +\frac{\kk^2(\JJ -j-1)}{2}+\frac{\alpha \log \theta_{j+1}}{ 2d \theta_{j+1}}+ \frac{\log F}{2d \theta_{j+1}} \Big) \nonumber \\
       &= \sum_{0 \leq j\leq \JJ -1} (\JJ -j) \exp \Big(\frac{\kk}{\theta_j} +\frac{\kk^2(\JJ -j-1)}{2}+\frac{ \alpha \log \theta_{j+1}}{ 2d \theta_{j+1}}+ \frac{\log F}{2d \theta_{j+1}} \Big) \nonumber \\
       &= \sum_{0 \leq u \leq \JJ -1}  (u+1) \exp \Big(  \frac{\kk e^{u+1}}{\theta_\JJ }+\frac{\kk^2u}{2} - \frac{\alpha ue^u}{2d\theta_\JJ }+\frac{ \alpha e^u \log \theta_\JJ }{2d \theta_\JJ }+\frac{e^u \log F}{2d \theta_\JJ } \Big) \nonumber  \\
       &= O(1) . \label{tobound2} 
       \end{align} 
Now using also \eqref{easy-part}, and the fact that the characters in $\mathcal{T}_0$ do not contribute to the upper bound, we finally have
\begin{align} \label{before-explicit}
\sum_{\substack{ \chi \in  \mathcal{C}(g) }} |{\textstyle L(\frac{1}{2},\chi)}|^{\kk} |{\textstyle M(\chi;\frac{1}{\kappa})}|^{\kk \kappa} \leq_\varepsilon 
\mathcal{D}_k^{1/2} \mathcal{S}_k^{1/2} \exp{\left( \frac{3}{2} k^2 + (1+\eta) k \right)}\left( \exp(k/\theta_J) +  \sqrt[4]{\frac{24}{c}}  \;C_J  \right) \;q^{g+2}.
\end{align}
This completes the proof of Theorem \ref{second-moment}. In Section \ref{explicit-UB},
 we find an explicit numerical value for the constant in the upper bound \eqref{before-explicit} when $k=2$, which depends on the bound for $C_J$.

\end{proof}

\section{A technical lemma}
\label{friendly-lemma}
 \begin{lem}
 \label{estimates}
 Let $j=0,\ldots,\JJ -1$, $0 \leq u \leq \JJ $ for (i), 
 and $j < u \leq \JJ $ for (iii).  Let $s_j$ be an integer with $a \kk \kappa  \ell_j \leq s_j \leq \frac{1}{d \theta_j},$ where $a$ and $d$  are such that $a >2$, $d>8$, and $4ad \theta_\JJ ^{1-b} \leq 1$, with $0<b<1$. 
 
 Then
 we have
\begin{align*}
 \text{(i)} \sum_{\chi \in \mathcal{C}(g)}& |{\textstyle M(\chi;\frac{1}{\kappa})}|^{2 \kk \kappa} ( \Re P_{I_0}(\chi;u))^{2s_0} \leq_{\varepsilon} 2 q^{g+2} e^{\kk^2\JJ } H(0) \Big( \sum_{P \in I_0} \frac{1}{|P|} \Big)^{s_0}  \frac{\left(\frac{5}{3}\right)^{(2-4/a)s_0/3}(2s_{0})!}{ 4^{s_{0}} \lfloor\frac{(2-4/a)s_{0}}{3}\rfloor!}, \\
\text{(ii)} \sum_{\chi \in \mathcal{C}(g)} &  D_{\JJ ,\kk}(\chi)^2|{\textstyle M(\chi;\frac{1}{\kappa})}|^{2  \kk \kappa} 
\leq_{\varepsilon} q^{g+2} \mathcal{D}_k \exp(2\kk^2 ) ,
\\
\text{(iii)}\sum_{\chi \in \mathcal{C}(g)}  & D_{j, \kk}(\chi)^2 \Big( \Re P_{I_{j+1}}(\chi;u) \Big)^{2s_{j+1}} |{\textstyle M(\chi;\frac{1}{\kappa})}|^{2  \kk  \kappa} \leq_{\varepsilon} 2  q^{g+2} \mathcal{D}_k \exp(3\kk^2  +2\kk) \\
& \times  e^{\kk^2(\JJ -j-1)} \Big( \sum_{P \in I_{j+1}} \frac{1}{|P|} \Big)^{s_{j+1}}   \frac{\left(\frac{5}{3}\right)^{(2-4/a)s_{j+1}/3}(2s_{j+1})!}{ 4^{s_{j+1}} \lfloor\frac{(2-4/a)s_{j+1}}{3}\rfloor!} ,
\end{align*}
where $\mathcal{D}_k$ is given in \eqref{c01} and where $H(0)$ is bounded by \eqref{h0}. 
 \end{lem}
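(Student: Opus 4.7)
The plan is to expand each Dirichlet polynomial factor, apply the character-averaging result Lemma \ref{LZ-Lemma4.1s} to reduce to a cube condition on the resulting product, and then bound the resulting Euler product prime by prime. For each part I would expand $|M(\chi;1/\kappa)|^{2\kk\kappa}=\prod_r |M_r(\chi;1/\kappa)|^{2\kk\kappa}$ via \eqref{mj}; the factor $(\Re P_I(\chi;u))^{2s}$ (when present: on $I_0$ for (i) and on $I_{j+1}$ for (iii)) via Lemma \ref{combinatorics}; and the factor $D_{\cdot,\kk}(\chi)^2=\prod_r(1+e^{-\ell_r/2})^2 E_{\ell_r}(\kk\Re P_{I_r}(\chi;\cdot))^2$ (when present, in (ii) and (iii)) by first expanding each $E_{\ell_r}$ via \eqref{finite-taylor} and then applying Lemma \ref{combinatorics}. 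Using $\overline{\chi}=\chi^2$ for cubic characters, the character $\chi$ is then evaluated on a product of the form $\prod_r F_rH_r^2 f_rh_r^2$, with one additional pair $GK^2$ coming from the $(\Re P_I)^{2s}$ factor whenever it is present.

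Applying Lemma \ref{LZ-Lemma4.1s} (with the variations permitted by the remark following it) then replaces $\sum_{\chi\in\mathcal{C}(g)}$ by $q^{g+2}$ times the sum restricted to configurations where this product equals \tinycube, provided condition \eqref{nicecondition} holds. This is verified directly from the hypotheses $s_j\le 1/(d\theta_j)$, $\ell_j\le 2\theta_j^{-b}$, $a>2$, $d>8$, and $4ad\theta_\JJ^{1-b}\le 1$. The cube condition then decouples over primes (the intervals $I_r$ are disjoint, and within a fixed interval the cube constraint at distinct primes is independent), so the sum factors as an Euler product. At a prime $P\in I_r$ the local factor is a sum over exponent tuples whose weighted total vanishes modulo $3$, with main contribution $1+O(\kk^2/|P|)$ coming from configurations with one or two nontrivial exponents. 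Multiplying over $P\in I_r$ and using the function-field Mertens estimate $\sum_{P\in I_r}1/|P|=\log(\theta_r/\theta_{r-1})+o(1)=1+o(1)$ (which follows from the Prime Polynomial Theorem \eqref{ppt} together with the choice $\theta_r/\theta_{r-1}=e$) gives a bounded contribution per interval. On the $\JJ-j-1$ mollifier-only intervals in (iii) this produces exactly the $\exp(\kk^2(\JJ-j-1))$ factor; the intervals on which the $D$-factor is also present contribute the bounded constant $\mathcal{D}_\kk$ together with the numerical factors $\exp(2\kk^2)$ in (ii) or $\exp(3\kk^2+2\kk)$ in (iii) that arise from the Taylor coefficients of $E_{\ell_r}^2$ combined with those of the mollifier.

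On the distinguished interval carrying the extra power ($I_0$ in (i), $I_{j+1}$ in (iii)) the factor $(\Re P_I)^{2s}$ forces exactly $2s$ additional prime exponents subject to the same cube congruence. A multinomial count of admissible configurations, weighted by $\sum_{P\in I}1/|P|$ and balanced against the mollifier cancellation (which uses the lower bound $s\ge a\kk\kappa\ell$ to leave only a $(2-4/a)/3$-fraction of surviving mod-$3$ choices), produces the factor $(5/3)^{(2-4/a)s/3}(2s)!/(4^{s}\lfloor(2-4/a)s/3\rfloor!)$ together with the $(\sum_{P\in I}1/|P|)^{s}$ prefactor, which matches the shape stated in the lemma.

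The hard part is the combinatorial bookkeeping on the distinguished interval in (iii): three different Dirichlet expansions (the mollifier, $D_{j,\kk}^2$, and $(\Re P_{I_{j+1}})^{2s_{j+1}}$) interact under a single cube congruence, and each power of $\kk$ and $\kappa$ must be tracked carefully in order to extract the coefficient $\exp(3\kk^2+2\kk)$. Parts (i) and (ii) then follow from the same framework as simpler special cases in which either the $D$-factor or the $(\Re P_I)^{2s}$ power (or, in (ii), the extra power alone) is absent.
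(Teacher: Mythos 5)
Your overall framework --- expanding via \eqref{mj}, \eqref{finite-taylor} and Lemma \ref{combinatorics}, averaging with Lemma \ref{mt} under condition \eqref{nicecondition}, and counting cube configurations on the distinguished interval to produce the factor $\left(\tfrac{5}{3}\right)^{cs/3}(2s)!/\bigl(4^{s}\lfloor cs/3\rfloor!\bigr)$ with $c=2-4/a$ --- is indeed the paper's approach. But there is a genuine gap at the heart of the argument, on the intervals $I_0,\dots,I_j$ where the mollifier and $D_{j,\kk}(\chi)^2$ are simultaneously present. You bound the local Euler factor there by $1+O(\kk^2/|P|)$ and invoke Mertens to get ``a bounded contribution per interval''. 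That reasoning yields $\exp(O(\kk^2))$ for each of the $j+1$ intervals, hence $\exp(O(\kk^2 j))$ in total; in (ii), where $j=\JJ\asymp\log\log g$, this is a spurious factor $(\log g)^{O(\kk^2)}$, so (ii) and (iii) would no longer be $O(q^{g+2})$ with an absolute constant, which is exactly what the proof of Theorem \ref{moll_ub} requires. (Compare (i), where no such cancellation is available and the factor $e^{\kk^2\JJ}$ genuinely appears: your mechanism cannot distinguish the two situations.) The point of the lemma is that this growth in $j$ does not occur, and the mechanism is a cancellation you never identify: after averaging, the coefficient of $\chi(P)/\sqrt{|P|}$ contributed by $D_{j,\kk}(\chi)^2$ is $+\kk\,a(P;j)$, while the mollifier contributes $-\kk\,a(P;\JJ)$ (independently of $\kappa$), so the $1/|P|$-coefficient of the local factor is not $O(\kk^2)$ but $\alpha_j(P)=\kk^2\bigl(a(P;j)-a(P;\JJ)\bigr)^2$; since $a(P;\JJ)-a(P;j)\le 2\deg(P)/((g+2)\theta_j)$, the product over \emph{all} primes of degree $\le (g+2)\theta_j$ is at most $\exp\bigl(2\kk^2(1+o(1))\bigr)$, uniformly in $j$ --- this is the computation leading to \eqref{bound} and \eqref{nr1}, and it is not attributable to ``Taylor coefficients of $E_{\ell_r}^2$ combined with those of the mollifier''.

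Exploiting this cancellation has structural consequences your sketch skips: you must keep part of the coprimality structure in the change of variables resolving $f_rh_r^2F_rH_r^2=\cube$ (the condition $(S,T)=1$), rather than discarding all coprimality conditions, since otherwise the second-order cancellation in $a(P;j)-a(P;\JJ)$ is lost; and you must first remove the truncations $\Omega\le\ell_r$, $\Omega\le\kk\kappa\ell_r$ to obtain a genuine Euler product, controlling the overshoot by the $2^{\Omega}\ge 2^{\ell_r}$ device --- that error term, of size $2^{-\ell_r}e^{16\kk^2}$ per interval, is the actual source of the constant $\mathcal{D}_k$ in \eqref{c01}. Finally, the remaining factor $\exp(\kk^2+2\kk)$ in (iii) comes from the exterior (mollifier-variable) sum $H(r)$ on the distinguished interval $I_{j+1}$, and the failure of that bound on $I_0$ (where small primes give only $H(0)\ll(g\theta_0)^{O(1)}$, see \eqref{h0}) is what produces the extra factor $H(0)$ in (i). Your combinatorial count on the distinguished interval is essentially right, but without the cancellation mechanism above the stated constants in (ii) and (iii) cannot be reached.
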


 \begin{proof}
Following \cite{BSM, DFL}, the sum over $\chi \in \mathcal{C}(g)$ can be rewritten as the sum over the cubic residue symbols $\chi_R$, for monic square-free polynomials $R \in \F_{q^2}[T]$ of degree ${g/2+1}$, with the property that if $P|R$ then $P \notin \mathbb{F}_q[T]$. Since all the summands in the expressions above are positive, we first bound the sums over $\chi \in \mathcal{C}(g)$ 
by the sum over all $R \in \mathcal{M}_{q^2,g/2+1}$.

We prove the last upper bound, and the first two are just simpler cases of that one. 
We note that $D_{j, \kk}(\chi)^2$ contributes primes from the intervals $I_0, \dots, I_j$, $\Re P_{I_{j+1}}(\chi;u)$ contributes primes from $I_{j+1}$ and the mollifier contributes primes from all the intervals $I_0, \dots, I_\JJ $. 
 To prove $\text{(iii)}$, we have to bound 
\begin{equation}\label{before-average} \sum_{R \in \mathcal{M}_{q^2,g/2+1}}
\prod_{r=0}^j  (1+e^{-\ell_r/2})^2 \mathcal{E}_R(r) \times \mathcal{E}_R(j+1)\times \prod_{r=j+2}^\JJ  \mathcal{E}_R(r),\end{equation}
where the $\mathcal{E}_R(r)$ are defined as follows. For $r=0,\dots, j$,
\begin{align*}
\mathcal{E}_R(r)&=
\sum_{\substack{P|f_r h_r F_{r1}F_{r2} H_{r1} H_{r2} \Rightarrow P \in I_r \\ \Omega(F_{r1} H_{r1}) \leq \ell_r \\\Omega(F_{r2} H_{r2}) \leq \ell_r \\  \Omega(f_r) \leq (\kk \cdot \kappa)\ell_r \\ \Omega(h_r) \leq (\kk  \cdot \kappa) \ell_r }} \frac{(\kk/2)^{\Omega(F_{r1} F_{r2}  H_{r1} H_{r2})}
a(F_{r1} F_{r2} H_{r1} H_{r2}; j) \nu(F_{r1}) \nu(F_{r2}) \nu(H_{r1}) \nu(H_{r2})  }{ \kappa^{\Omega(f_r h_r)} \sqrt{  |f_r h_r F_{r1}F_{r2} H_{r1} H_{r2}|}} \\
& \times a (f_r h_r ; \JJ ) \lambda(f_r h_r) \nu_{\kk \kappa} (f_r;\ell_r) \nu_{\kk \kappa}(h_r;\ell_r)\chi_R(f_rh_r^2F_{r1} F_{r2}H_{r1}^2H_{r2}^2).
\end{align*}

For $r=j+1$,
\begin{eqnarray*}
\mathcal{E}_R(r) = \frac{(2s_r)!}{4^{s_r}}
\sum_{\substack{P|f_r h_r F_r H_r\Rightarrow P \in I_r \\ \Omega(F_r H_r) = 2s_r \\ \Omega(f_r) \leq (\kk \cdot \kappa)\ell_r \\ \Omega(h_r) \leq ( \kk \cdot \kappa) \ell_r }}
\frac{a(F_r H_r; u) \nu(F_r) \nu(H_r) a (f_r h_r ; \JJ ) \lambda(f_r h_r) \nu_{\kk \kappa} (f_r;\ell_r) \nu_{\kk \kappa }(h_r;\ell_r) \chi_R(f_rh_r^2F_rH_r^2)}{ \kappa^{\Omega(f_r h_r)} \sqrt{  |f_r F_r h_r H_r|}},
\end{eqnarray*}

For $r=j+2,\dots, \JJ $,
\begin{eqnarray*}
\mathcal{E}_R(r)=
\sum_{\substack{P|f_r h_r \Rightarrow P \in I_r \\ \Omega(f_r) \leq (\kk \cdot \kappa)\ell_r \\ \Omega(h_r) \leq ( \kk  \cdot \kappa) \ell_r }} \frac{
a (f_r h_r ; \JJ ) \lambda(f_r h_r) \nu_{\kk \kappa} (f_r;\ell_r) \nu_{\kk \kappa}(h_r;\ell_r) \chi_R(f_rh_r^2)}{ \kappa^{\Omega(f_r h_r)} \sqrt{  |f_r h_r|}}.
\end{eqnarray*}

For $\theta_\JJ $ small enough (depending of $d, \kk, \kappa$), note that we can apply Lemma \ref{mt} to evaluate \eqref{before-average} because from our choice of parameters, we have, for any $j \leq \JJ -1$, that
 \begin{equation}
  4\sum_{r \leq j}  \theta_r\ell_r  + 4\theta_{j+1}  s_{j+1} + 3 \sum_{r=0}^\JJ  \theta_r \kk \kappa \ell_r \leq 1/2.
 \label{important_condition}
  \end{equation} 
We then obtain that \eqref{before-average} is bounded by
\begin{eqnarray} \label{after-lemma}
q^{g+2} \left( \prod_{r=0}^j  \left(1 + e^{-\ell_r/2} \right)^2  E(r) \times E({j+1}) \times \prod_{r=j+2}^\JJ  E(r) \right),
\end{eqnarray}
where the $E(r)$ are the factors obtained after doing the average over $R$ from Lemma \ref{mt}.
We proceed to address the three cases, depending on the value of $r$. 

For  $r=0, \dots, j$, we have
\begin{align*}
E(r)& =
\sum_{\substack{P|f_r h_r F_{r1}F_{r2} H_{r1} H_{r2} \Rightarrow P \in I_r \\ \Omega(F_{r1} H_{r1}) \leq \ell_r \\\Omega(F_{r2} H_{r2}) \leq \ell_r \\  \Omega(f_r) \leq (\kk \cdot \kappa) \ell_r \\ \Omega(h_r) \leq (\kk  \cdot \kappa) \ell_r \\ f_rh_r^2F_{r1} F_{r2} H_{r1}^2 H_{r2}^2= \tinycube }} \frac{(\kk/2)^{\Omega(F_{r1} F_{r2}  H_{r1} H_{r2})}
a(F_{r1} F_{r2} H_{r1} H_{r2}; j) \nu(F_{r1}) \nu(F_{r2}) \nu(H_{r1}) \nu(H_{r2})  }{ \kappa^{\Omega(f_r h_r)} \sqrt{  |f_r h_r F_{r1}F_{r2} H_{r1} H_{r2}|}} \\
& \times a (f_r h_r ; \JJ ) \lambda(f_r h_r) \nu_{\kk \kappa} (f_r;\ell_r) \nu_{\kk \kappa}(h_r;\ell_r)\frac{\phi_{q^2}(f_rh_r^2F_{r1} F_{r2}H_{r1}^2H_{r2}^2)}{| f_rh_r^2F_{r1} F_{r2}H_{r1}^2H_{r2}^2|_{q^2}}.
 \end{align*}

Notice that if $\max\{\Omega(f_r),\Omega(h_r),\Omega(F_{r1}H_{r1}), \Omega(F_{r2} H_{r2})\}\geq \ell_r$, we have $2^{\Omega(f_rh_rF_{r1}H_{r1} F_{r2} H_{r2})}\geq 2^{\ell_r}$. We write $F_r=F_{r1} F_{r2}$,  $H_r=H_{r1}H_{r2}$, and we recall
that $\nu_2(F_r) = (\nu * \nu)(F_r)$. 
We have 
\begin{align}
E(r) &\leq 
\sum_{\substack{P|f_r h_r F_r H_r\Rightarrow P \in I_r \\ f_r h_r^2 F_r H_r^2=\tinycube}} \frac{(\kk/2)^{\Omega(F_r H_r)}
 a(F_r H_r; j)  \nu_2(F_r) \nu_2(H_r) a (f_r h_r ; \JJ )  \lambda(f_r h_r) \nu_{\kk \kappa} (f_r) \nu_{\kk \kappa}(h_r) \phi_{q^2} (f_r h_r^2 F_r H_r^2)}{ \kappa^{\Omega(f_r h_r)} \sqrt{  |f_r F_r h_r H_r|} |f_r h_r^2 F_r H_r^2|_{q^2}} \nonumber\\
&\qquad +\frac{1}{2^{\ell_r}}\sum_{\substack{P|f_r h_r F_r H_r\Rightarrow P \in I_r \\ f_r h_r^2 F_r H_r^2=\tinycube}} \frac{ 2^{\Omega(f_r h_rF_rH_r)}(\kk/2)^{\Omega(F_r H_r)} \nu_2(F_r) \nu_2(H_r) \nu_{\kk \kappa}(f_r) \nu_{\kk \kappa}(h_r)}{\kappa^{\Omega(f_r h_r)} \sqrt{|f_r h_rF_rH_r|}},
\label{eq:errorterm}
\end{align}
where we have used the bounds $\phi_{q^2}(f_r h_r^2 F_r H_r^2)/|f_r h_r^2 F_r H_r^2|_{q^2}, \lambda(f_r h_r), \nu(F_r), \nu(H_r) \leq 1$, $ a(F_rH_r;j), a(f_r h_r; \JJ ) \leq 1$ in the second term above. Now using that  
$\nu_{\kk \kappa }(f_r) \leq (\kk \kappa)^{\Omega(f_r)}$ and $\nu_2(F_r) \leq 2^{\Omega(F_r)}$ we get that the second term in \eqref{eq:errorterm} is
\begin{equation}\label{eq:errorterm2}
\leq \frac{1}{2^{\ell_r}}\sum_{\substack{P|f_r h_r F_r H_r\Rightarrow P \in I_r \\ f_r h_r^2 F_r H_r^2=\tinycube}} \frac{ (2\kk)^{\Omega(f_rh_rF_rH_r)}}{\sqrt{|f_rh_rF_rH_r|}}.
\end{equation}

Now write $(f_r,h_r)=X$ and $(F_r,H_r)=Y$ and let $f_r=f_{r,0}X$, $h_r=h_{r,0}X$, $F_r=F_{r,0}Y$, and $H_r=H_{r,0}Y$. Then $f_{r,0}h_{r,0}^2F_{r,0}H_{r,0}^2=\cube$. Write $(f_{r,0},H_{r,0})=S$ and  $(h_{r,0},F_{r,0})=T$, and write $f_{r,0}=f_{r,1}S$, $h_{r,0}=h_{r,1}T$, $F_{r,0}=F_{r,1}T$, and $H_{r,0}=H_{r,1}S$. Then $f_{r,1}h_{r,1}^2F_{r,1}H_{r,1}^2=\cube$ with $(f_{r,1}F_{r,1}, h_{r,1}H_{r,1})=1$, and it follows that $f_{r,1}F_{r,1}=\cube$, $h_{r,1}H_{r,1}=\cube$. Let $(f_{r,1},F_{r,1})=M$, $f_{r,1}=f_{r,2}M$, $F_{r,1}=F_{r,2}M$. 
Then $M^2f_{r,2}F_{r,2}=\cube$. Write $M=CD^2\times \cube$ with $(C,D)=1$ and $C, D$ square-free. Then 
$C^2Df_{r,2}F_{r,2}=\cube$ and it follows that $f_{r,2}=C_1D_1^2\times \cube$, $F_{r,2}=C_2D_2^2 \times \cube$ where $C_1C_2=C$ and $D_1D_2=D$. Then we replace 
\[f_r\rightarrow X SCD^2C_1D_1^2f_r^3, \quad F_r\rightarrow YTCD^2C_2D_2^2F_r^3,\]
and similarly
\[h_r\rightarrow X TAB^2A_1B_1^2h_r^3, \quad H_r\rightarrow YSAB^2A_2B_2^2H_r^3,\]
with $A_1A_2=A$ and $B_1B_2=B$. 
We ignore the co-primality conditions when bounding the second term of \eqref{eq:errorterm}, and for the first term in \eqref{eq:errorterm} we keep the condition $(S,T)=1$, which we need to get the cancellation between the mollifier and the short Dirichlet polynomial of the $L$-function. 

\kommentar{\mcom{Old version: There are of course co-primality conditions between some of the variables, which we ignore when we upper bound the factors $E(r)$. We need to keep those conditions only in the case when $r \leq j$, to get the cancellation between the mollifier and the short Dirichlet polynomial of the $L$-function. This involves only the variables $T,S,X,Y$, where we have the condition $(S,T)=1$. }
\acom{I find this paragraph a bit confusing. We ignore the co-primality conditions when bounding the second term of \eqref{eq:errorterm}, and for the first term in \eqref{eq:errorterm} we keep the condition $(S,T)=1$.}}

Replacing in  \eqref{eq:errorterm2}, we get that the second term in \eqref{eq:errorterm} is bounded by 
\begin{eqnarray*}
&\leq& \frac{1}{2^{\ell_r}}\sum_{\substack{P|XYSTABCDf_r h_r F_r H_r\Rightarrow P \in I_r}} \frac{
(2\kk) ^{\Omega(X^2Y^2S^2T^2A^3B^6C^3D^6f_r^3h_r^3F_r^3H_r^3)}}{|XYSTB^3D^3||ACf_rh_rF_rH_r|^{3/2} } \nonumber\\
&=& \frac{1}{2^{\ell_r}} \prod_{P \in I_r} \Big(1-\frac{ (2\kk)^2}{|P|} \Big)^{-4} \Big(1- \frac{ (2\kk)^3}{|P|^{3/2}} \Big)^{-6} \Big(1- \frac{(2\kk)^6}{|P|^3} \Big)^{-2}\nonumber.
\end{eqnarray*}
Let $F(r)$ denote the expression above. Using the inequality form of the Prime Polynomial Theorem \eqref{ppt-bound}, note that for $r \neq 0$, we have that 
$$F(r) \leq_{\epsilon} \frac{1}{2^{\ell_r}}\exp 
 \Big(4(2\kk)^2+ 6(2\kk)^3 \sum_{(g+2)\theta_{r-1}<n\leq (g+2) \theta_r} \frac{1}{nq^{n/2}}+2(2\kk)^6 \sum_{(g+2)\theta_{r-1}<n\leq (g+2) \theta_r} \frac{1}{nq^{2n}} \Big),$$ and hence

\begin{equation}\label{F(r)crazy}
F(r)\leq_\varepsilon \frac{1}{2^{\ell_r}} \exp(16 \kk^2).
 \end{equation}
 For $r=0$, we have that
 $$F(r) \leq_{\epsilon} \frac{1}{2^{\ell_0}} ((g+2) \theta_0)^{O(1)},$$ and we remark that 

\kommentar{&\leq_\varepsilon& \begin{cases}
 \frac{1}{2^{\ell_r}}\exp 
 \Big(4(2\kk)^2+ 6(2\kk)^3 \sum_{(g+2)\theta_{r-1}<n\leq (g+2) \theta_r} \frac{1}{nq^{n/2}}+2(2\kk)^6 \sum_{(g+2)\theta_{r-1}<n\leq (g+2) \theta_r} \frac{1}{nq^{2n}} \Big) &\mbox{if } r \neq 0,\\
 \frac{1}{2^{\ell_0}} ((g+2) \theta_0)^{\Big(\frac{4(2\kk)^4 }{q-(2k)^2}\Big)}&\\ \times \exp  \Big[\Big(\frac{(2k)^2}{q-(2k)^2}\Big) \Big( 6(2\kk)^3 \sum_{0<n\leq (g+2) \theta_0} \frac{1}{nq^{n/2}}+2(2\kk)^6 \sum_{0<n\leq (g+2) \theta_0} \frac{1}{nq^{2n}} \Big) \Big] &\mbox{if } r= 0,
\end{cases}
\end{eqnarray*}
as long as $q>4k^2$, 
\acom{Here we need $q>4k^2$?} \mcom{yes. I've just added it.}
\acom{Ah, but I think we want to have the bound for $q<4k^2$ as well. For example if $q=5$ only $k=1$ would work. This is very minor, I think it's clear that we get the bound $((g+2)\theta_0)^{O(1)}$ so I don't know if we want to go into all the details. I think we'd want to split the primes into $\deg(P) \leq [\log_q(2k)^2]$ and $ [\log_q( (2k)^2)]+1\leq \deg(P) \leq (g+2)\theta_0$ and then for the part with $ [\log_q( (2k)^2)]+1\leq \deg(P) \leq (g+2)\theta_0$ we would get (I'm just looking at the first term with $|P|$):
\begin{align*}
\exp  &\Big(  (\log ((g+2)\theta_0)^{4(2k)^2}+ 4(2k)^4 \sum_{  [\log_q( (2k)^2)]+1\leq \deg(P) \leq (g+2)\theta_0} \frac{1}{n(q^n -(2k)^2)} \Big) \\
& \leq ((g+2) \theta_0)^{4(2k)^2} \exp \Big(8(2k)^4 \sum_{n=1}^{\infty} \frac{1}{nq^n} + 4(2k)^4 \frac{1}{ ( [\log_q( (2k)^2)]+1)(q^{ [\log_q( (2k)^2)]+1}-(2k)^2)} \Big)
\end{align*}
I'm not sure this is worth the while, maybe we should just leave it as Chantal was suggesting. Same comment on page $26$ as well. } \mcom{Ah, I see. Yeah, I don't think it's worth to go over all of this... Maybe, let's create a new file where we get rid of this bound, I want to keep this compuation in case the referee demands it. Same for page 26} }


\[\lim_{g \rightarrow \infty} F(0)= 0.\]

For the first term in  \eqref{eq:errorterm}, using the change of variable from before, we get 
\begin{eqnarray*}
&&\sum_{\substack{P|ABCDf_r h_r F_r H_r\Rightarrow P \in I_r \\ C_1C_2=C, D_1D_2=D\\A_1A_2=A, B_1B_2=B}}\frac{ (\kk/2)^{\Omega(CD^2C_2D_2^2AB^2A_2B_2^2F_r^3H_r^3)}
 \lambda(CC_1AA_1f_rh_r)}{\kappa^{\Omega(CD^2C_1D_1^2AB^2A_1B_1^2f_r^3 h_r^3)}  \sqrt{  |C^3D^6 A^3B^6f_r^3 h_r^3F_r^3 H_r^3|}}\\
&& \times a(f_r^3h_r^3;\JJ )a(AB^2A_1 B_1^2CD^2C_1D_1^2;\JJ ) a(F_r^3H_r^3;j) a(AB^2A_2B_2^2CD^2C_2D_2^2;j) \\
&& \times \sum_{\substack{P|STXY\Rightarrow P \in I_r\\ (S, T)=1}} \frac{ (\kk/2)^{\Omega(Y^2ST)}
a(S;\JJ ) a(S; j) a(T;\JJ ) a(T;j) a(X; \JJ )^2 a(Y; j)^2 \lambda(ST) }{\kappa^{\Omega(X^2 ST)}  |YX ST| } \\
&&\;\;\;\;\;\;\;\;\;\; \;\;\;\;\;\;\;\;\;\;\times   \nu_2( YTCD^2C_2D_2^2F_r^3) \nu_2(YSAB^2A_2B_2^2H_r^3) \\
&&\;\;\;\;\;\;\;\;\;\; \;\;\;\;\;\;\;\;\;\;\times  \nu_{\kk \kappa} (X SCD^2C_1D_1^2f_r^3) \nu_{\kk \kappa}(X TAB^2A_1B_1^2h_r^3) \\
&& \;\;\;\;\;\;\;\;\;\; \;\;\;\;\;\;\;\;\;\; \times \frac{\phi_{q^2}(X^3 S^3T^3Y^3C^3D^6 A^6B^{12}f_r^3h_r^6F_r^3H_r^6)}{ |X^3 S^3T^3Y^3C^3D^6 A^6B^{12}f_r^3h_r^6F_r^3H_r^6|_{q^2}}.
\end{eqnarray*}

For every fixed value of $A, B, C, D, f_r, h_r, F_r, H_r, A_1, A_2, B_1, B_2, C_1, C_2, D_1, D_2$, let
\begin{eqnarray*} &&\mathcal{F} (A, B, C, D, f_r, h_r, F_r, H_r,  A_1, A_2, B_1, B_2, C_1, C_2, D_1, D_2)
\\ &:=& \prod_{\substack{P \in I_r \\P^a \parallel A, P^b \parallel B, \dots, P^{d_1} \parallel D_1, P^{d_2} \parallel D_2}} \frac{\sigma(P; a,b,c,d,f,h,F,H, a_1, a_2, b_1, b_2, c_1, c_2, d_1, d_2)}{\sigma(P)},\end{eqnarray*}
where 
\begin{eqnarray*}
&& \sigma(P; a,b,c,d,f,h,F,H, a_1, a_2, b_1, b_2, c_1, c_2, d_1, d_2)\\
:= &&\sum_{s, t, x, y \geq 0, \; st =0} \left(  (\kk/2)^{s+t+2y} (1/\kappa)^{s+t+2x} a(P; j)^{s+t+2y} a(P; \JJ )^{s+t+2x}  (-1)^{s + t} \right.\\
&& \times \nu_2(P^{t + y+c+2d+c_2+2 d_2 +3F}) \nu_2(P^{s + y+a+2b+a_2+2b_2+3H})  \\
&& \times \nu_{\kk \kappa}(P^{s + x+c+2d+c_1+2d_1+3f}) \nu_{\kk \kappa}(P^{t+ x+a+2b+a_1+2b_1+3h})\\
&& \;\;\;\;\;\;\;\;\;\;\;\;\;\; \left. \times \frac{\phi_{q^2}(P^{3s + 3t + 3x + 3y+3c+6d+6a+12b+3f+6h+3F+6H})}{|P^{3s + 3t + 3x + 3y+3c+6d+6a+12b+3f+6h+3F+6H}|_{q^2}} \frac{1}{|P|^{s+t+x+y}} \right),
\end{eqnarray*}
and  $\sigma(P):=\sigma(P;0, \dots, 0)$.

We can rewrite the first term in  \eqref{eq:errorterm} as
\begin{eqnarray*}
&& \prod_{P \in I_r} \sigma(P)  \times \sum_{\substack{P|ABCDf_r h_r F_r H_r\Rightarrow P \in I_r \\ C_1C_2=C, D_1D_2=D\\A_1A_2=A, B_1B_2=B}}\frac{ (\kk/2)^{\Omega(CD^2C_2D_2^2AB^2A_2B_2^2F_r^3H_r^3)}
 \lambda(CC_1AA_1f_rh_r)}{\kappa^{\Omega(CD^2C_1D_1^2AB^2A_1B_1^2f_r^3 h_r^3)}  \sqrt{  |C^3D^6 A^3B^6f_r^3 h_r^3F_r^3 H_r^3|}}\\
 && \times a(f_r^3h_r^3;\JJ )a(AB^2CD^2C_1D_1^2A_1 B_1^2;\JJ ) a(F_r^3H_r^3;j) a(AB^2CD^2A_2B_2^2C_2D_2^2;j) \\
&& \;\;\;\;\;\;\;\;\;\;\;\;\;\;\;\;\;\; \times \;\mathcal{F} (A, B, C, D, f_r, h_r, F_r, H_r,  A_1, A_2, B_1, B_2, C_1, C_2, D_1, D_2).
\end{eqnarray*}

It is clear that the sum is absolutely bounded; since we need an explicit constant, we will prove that 
that the sum is $\leq 1$. We can write it as an Euler product, and we look at the coefficient of $1/|P|^{3/2}$. We need to consider $f_r=P$, $h_r=P, F_r=P, H_r=P$ and $A=P, A_1=1$ and $A=P, A_1=P,$ $C=P, C_1=1$ and $C=P, C_1=P$. When $f_r=P$ (and everything else is $1$) we get a factor of
$$ -\frac{1}{\kappa^3}a(P;\JJ )^3 \nu_{\kk \kappa} (P^3) \frac{\phi_{q^2}(P^3)}{|P|^{3/2}|P|_{q^2}^3}= -\frac{\kk^3 a(P;\JJ )^3 \phi_{q^2}(P^3)}{6|P|^{3/2} |P^3|_{q^2}} .$$ We get the same term when $h_r=P$. If $F_r=P$ we get 
$$ \frac{\kk^3 a(P;j)^3\phi_{q^2}(P^3)}{6|P|^{3/2}|P^3|_{q^2}},$$
and when $H_r=P$ we get the same factor. If $A=P, A_1=1$, we get the term 
$$ - \frac{\kk^2a(P;\JJ ) a(P;j)^2}{\kappa} \frac{\kk \kappa \phi_{q^2}(P^6)}{2|P|^{3/2}|P^6|_{q^2}}= -\frac{\kk^3 a(P;\JJ ) a(P;j)^2 \phi_{q^2}(P^6)}{2 |P|^{3/2} |P^6|_{q^2}}.$$
Similarly when $A=P, A_1=P$ we get
$$ \frac{\kk^3 a(P;\JJ )^2 a(P;j) \phi_{q^2}(P^6)}{2 |P|^{3/2} |P^6|_{q^2}}.$$
We get the same contribution for $C=P, C_1=1$ and $C=P, C_1=P$ as for the $A$'s. Overall for the sum over $A,B,C,D, A_1, C_1, B_1, D_1,F_r,h_r,F_r,H_r$ we get 
$$\prod_{P \in I_r} \Big(1+ \frac{\kk^3(a(P;j)-a(P;\JJ ))^3 \phi_{q^2} (P^3)}{3|P|^{3/2} |P^3|_{q^2}} + O \Big( \frac{1}{|P|^{5/2}} \Big) \Big) \leq 1,$$ since $a(P;j) < a(P;\JJ )$.

The Euler product is equal to
\begin{eqnarray*} \nonumber
\prod_{P \in I_r} \sigma(P) &=&  \prod_{P \in I_r} \sum_{\substack{s, t, x, y \geq 0 \\ st =0}} \left(  (\kk/2)^{s+t+2y} (1/\kappa)^{s+t+2x} a(P; j)^{s+t+2y} a(P; \JJ )^{s+t+2x}   (-1)^{s + t}  
 \right. \\ \nonumber
&& \;\;\;\;\;\;\;\;\;\;\;\;\;\; \left.  \nu_2(P^{s + y}) 
\nu_2(P^{t + y})  \nu_{\kk \kappa }(P^{s + x}) \nu_{\kk \kappa }(P^{t + x}) 
\frac{\phi_{q^2}(P^{3s + 3t + 3x + 3y})}{|P^{3s + 3t + 3x+3y}|_{q^2}} \frac{1}{|P|^{s+t+x+y}} \right) \\ \label{cancel}
&=&  \prod_{P \in I_r} \left( 1 + \frac{\alpha_j(P)}{|P|} + O \left( \frac{1}{|P|^2} \right) \right),
\end{eqnarray*}
where we define $\alpha_j(P):=\kk^2 (a(P;j)-a(P;\JJ ))^2$ for any $0\leq r \leq j$. 

We have
\begin{align*}
 a(P;\JJ )-a(P;j) &= \frac{1}{|P|^{\frac{1}{(g+2) \theta_\JJ  \log q}}} - \frac{1}{|P|^{\frac{1}{(g+2) \theta_j \log q}}}+ \frac{\deg(P)}{(g+2) \theta_j |P|^{\frac{1}{(g+2) \theta_j \log q}}} -\frac{\deg(P)}{(g+2) \theta_\JJ  |P|^{\frac{1}{(g+2) \theta_\JJ  \log q}}} \\
 & \leq 1- \Big(1- \frac{\deg(P) }{(g+2) \theta_j}\Big)+ \frac{\deg(P)}{(g+2) \theta_j} \leq \frac{2 \deg(P) }{(g+2) \theta_j},
 \end{align*} 
which gives that
\begin{align}
\prod_{\deg{P} \leq (g+2) \theta_j} 
\left( 1 +  \frac{\alpha_j(P)}{|P|} + O \left( \frac{1}{|P|^2} \right) \right)
 \leq& \exp \left(\sum_{\deg{P} \leq (g+2) \theta_j} \frac{\alpha_j(P)}{|P|}  \right) \nonumber \\
 \leq& \exp  \left(\frac{4\kk^2  }{(g+2)^2\theta_j^2}\sum_{\deg{P} \leq (g+2) \theta_j} 
 \frac{\deg(P)^2}{|P|}  \right) \nonumber  \\
 \leq& \exp \Big(2\kk^2  \Big(1+\frac{1}{(g+2) \theta_j}\Big) \Big) \label{bound}.
 \end{align}
Combining with \eqref{F(r)crazy} and incorporating everything in \eqref{eq:errorterm}, we get that the contributions from the intervals $I_0, \dots, I_j$ are bounded by
\begin{align}
&\prod_{r=0}^j (1 + e^{-\ell_r / 2})^2 E(r)  \nonumber \\
&\leq   \prod_{r=0}^j  \Big( \prod_{P \in I_r} \Big(1+\frac{\alpha_j(P)}{|P|} + O \Big(\frac{1}{|P|^2} \Big) \Big)+F(r)  \Big) (1+e^{-\ell_r/2})^2 \nonumber  \\
& \leq_\varepsilon \mathcal{D}_k\prod_{\deg{P} \leq (g+2) \theta_j} \left(
1 +  \frac{\alpha_j(P)}{|P|} + O \left( \frac{1}{|P|^2} \right) \right) \nonumber \\
& \leq_\varepsilon  \mathcal{D}_k \exp (2\kk^2 ), \label{nr1}
\end{align}
where 
\begin{equation} 
\mathcal{D}_k=(1+e^{-\ell_0/2})^2 \prod_{r=1}^\JJ  (1+e^{-\ell_r/2})^2\Big(1+\frac{e^{16\kk^2}}{2^{\ell_r}} \Big). \label{c01} 
\end{equation}


We now look at the term  $r= j+1$ from \eqref{after-lemma}, which involves the mollifier and  $(\Re P_{I_{r}}(\chi))^{2s_r}$. We first write
\begin{eqnarray} \label{r=j+1}
{E(r)} \leq \frac{(2s_r)!}{4^{s_r}} 
\sum_{\substack{P|f_r h_r F_r H_r\Rightarrow P \in I_r \\ \Omega(F_r H_r) = 2s_r \\ \Omega(f_r) \leq (\kk \cdot \kappa)\ell_r \\ \Omega(h_r) \leq ( \kk  \cdot \kappa) \ell_r \\ f_r h_r^2 F_r H_r^2=\tinycube}}
\frac{\nu(F_r) \nu(H_r) \nu_{\kk \kappa} (f_r) \nu_{\kk \kappa}(h_r) }{ \kappa^{\Omega(f_r h_r)} \sqrt{  |f_r F_r h_r H_r|}},
\end{eqnarray}
where we have bounded $\lambda(f_r h_r), a(f_r h_r; \JJ ) a(F_r H_r; u), \phi_{q^2} (f_r h_r^2 F_r H_r^2)/  | (f_r h_r^2 F_r H_r^2)|_{q^2} \leq 1$, $\nu_{\kk \kappa}(f_r; \ell_r) \leq \nu_{\kk \kappa}(f_r)$.

Using the change of variable as before, we can rewrite the sum of \eqref{r=j+1} as
\begin{eqnarray} \label{2-sums}
&&\sum_{\substack{X, S, T, C, D, A, B, f_r, h_r \\ C_1 C_2 = C, \; D_1 D_2 = D \\ A_1 A_2 = A, \; B_1 B_2 = B \\
P|XST f_r h_r ABCD \Rightarrow P \in I_r \\ 
\Omega(XSCD^2C_1D_1^2f_r^3) \leq \kk \kappa  \ell_r \\ \Omega(XTAB^2A_1B_1^2h_r^3) \leq \kk \kappa \ell_r}}
\frac{ \nu_{\kk \kappa} (XSCD^2C_1D_1^2f_r^3) \nu_{\kk \kappa}(XTAB^2 A_1 B_1^2 h_r^3) }{ \kappa^{\Omega(X^2SCD^2C_1D_1^2f_r^3 TAB^2 A_1 B_1^2 h_r^3)} \sqrt{  |(X^2  S^2 T^2 C^3 A^3 B^6 D^6 f_r^3  h_r^3|}},
\\ \nonumber
&& \times \sum_{\substack{Y, F_r, H_r \\ P|Y F_r H_r \Rightarrow P \in I_r\\\Omega(Y^2 F_r^3 H_r^3) = 2s_r - \Omega(TCD^2 C_2 D_2^2 S A B^2 A_2 B_2^2
)}} \frac{ \nu(Y)^2 \nu(F_r) \nu(H_r)}{|Y| |F_r H_r|^{3/2}
 3^{\Omega(F_r)} 3^{\Omega(H_r)}},
\end{eqnarray}
 where we have used the fact that $\nu(Z^3) \leq \nu(Z)/3^{\Omega(Z)}$ and $\nu(\cdot) \leq 1$.

Now note that $ \Omega(TCD^2 C_2 D_2^2 S A B^2 A_2 B_2^2) \leq \Omega((ST) (CD^2)^2(AB^2)^2)) \leq 4 \kk \kappa \ell_r$  and by hypothesis
$ 4 \kk \kappa \ell_r \leq \frac{4}{a} s_r$, so $\Omega(Y^2 F_r^3 H_r^3) \geq (2-\frac{4}{a})s_r:=c s_r$.

 Let $\alpha=2s_r -   \Omega(TCD^2 C_2 D_2^2 S A B^2 A_2 B_2^2) $.
 Using the fact that $\nu(Y)^2 \leq \nu(Y)$ (since $\nu(Y) \leq 1$), the sum over $Y, F_r, H_r$  is   bounded by 
 \begin{align} 
 \sum_{2i+3j+3k=\alpha} \sum_{\substack{P| Y \Rightarrow P \in I_r \\ \Omega(Y) = i}} \frac{\nu(Y)}{|Y|} \sum_{\substack{P|F_r \Rightarrow P \in I_r \\ \Omega(F_r)=j}} \frac{\nu(F_r)}{3^{\Omega(F_r)} |F_r|^{3/2}} \sum_{\substack{P|H_r \Rightarrow P \in I_r \\ \Omega(H_r)=k}} \frac{\nu(H_r)}{3^{\Omega(H_r)} |H_r|^{3/2}} . \label{to_bound}
 \end{align}
 Now $$ \sum_{\substack{P|F_r \Rightarrow P \in I_r \\ \Omega(F_r)=j}} \frac{\nu(F_r)}{3^{\Omega(F_r)} |F_r|^{3/2}} = \frac{1}{j!} \Big( \sum_{P \in I_r} \frac{1}{3|P|^{3/2}} \Big)^j,$$ a similar expression holds for the sum over $H_r$,
 and
 $$ \sum_{\substack{P|Y \Rightarrow P \in I_r \\ \Omega(Y) = i}} \frac{\nu(Y)}{|Y|} = \frac{1}{i!} \Big( \sum_{P \in I_r} \frac{1}{|P|} \Big)^i.$$
 Using the inequalities above, it follows that
 \begin{align}
 \eqref{to_bound}  & \leq \Big( \sum_{P \in I_r} \frac{1}{|P|} \Big)^{\alpha/2} \sum_{2i+3j+3k=\alpha} \frac{1}{i!j!k!3^{j+k}} = \Big( \sum_{P \in I_r} \frac{1}{|P|} \Big)^{\alpha/2} \sum_{\substack{i \leq \alpha/2\\3\mid (\alpha-2i)}} \Big( \frac{2}{3} \Big)^{\frac{\alpha-2i}{3}} \frac{1}{ i!  ( \frac{\alpha-2i}{3}) !} \nonumber \\
 & \leq \Big( \sum_{P \in I_r} \frac{1}{|P|} \Big)^{\alpha/2}\Big( \frac{2}{3} \Big)^{\alpha/3} \Big( \sum_{i \leq \alpha/3} \frac{\Big( \frac{2}{3} \Big)^{\frac{-2i}{3}}}{i! \lfloor \frac{\alpha}{3}-i \rfloor!}+ \sum_{\substack{\alpha/3<i\leq \alpha/2\\3 \mid (\alpha-2i)}} \frac{\Big( \frac{2}{3} \Big)^{\frac{-2i}{3}}}{\lfloor \frac{2i}{3}\rfloor! ( \frac{\alpha-2i}{3})!} \Big)  \nonumber\\ \label{internal-sum}
 &\leq 2\Big( \sum_{P \in I_r} \frac{1}{|P|} \Big)^{\alpha/2}\Big( \frac{2}{3} \Big)^{\alpha/3} \frac{\left(\frac{5}{2}\right)^{\alpha/3}}{\lfloor \alpha/3\rfloor!}\leq 2\Big( \sum_{P \in I_r} \frac{1}{|P|} \Big)^{s_r}\frac{\left(\frac{5}{3}\right)^{c s_r/3}}{\lfloor cs_r/3\rfloor !},
 \end{align}

We now consider the exterior sum in \eqref{2-sums}. 
For the sum over $A_1$ we have (recall that $A$ is square-free)
$$ \sum_{A_1|A} \frac{\nu_{\kk \kappa}(A_1)}{\kappa^{\Omega(A_1)}} = \prod_{P|A}(1+\kk).$$ Then overall we get that
$$\sum_{P|A \Rightarrow P \in I_r} \frac{\nu_{\kk \kappa}(A)}{\kappa^{\Omega(A)}} (\kk+1)^{\omega(A)} \frac{1}{|A|^{3/2}} = \prod_{P \in I_r} \Big( 1+ \frac{\kk(\kk+1)}{|P|^{3/2}} \Big).$$
Similar expressions hold for the sums over $C, B, D$ and overall for the sum over $X,S,T,A,B,C,D, f_r, h_r$ we will get that it is 
\begin{align*}
\leq \prod_{P \in I_r} \Big( 1+ \frac{\kk(\kk+1)}{|P|^{3/2}} \Big)^2 \Big(1+ \frac{\kk^2(\kk^2/2+1)}{2|P|^3} \Big)^2 \Big(1+ \frac{\kk^3}{6|P|^{3/2}} \Big)^2 \Big( 1+\frac{\kk^2}{|P|}\Big)\Big( 1+ \frac{\kk}{|P|}\Big)^2 := H(r).
\end{align*}
Using the Prime Polynomial Theorem \eqref{ppt-bound}, we get  that 
\begin{equation*} H(r) \leq_\varepsilon
  \exp \Big( \kk^2+2\kk+ \frac{2\kk(\kk+1)}{q^{(g+2) \theta_{r-1}/2}} + \frac{ \kk^3}{3q^{(g+2) \theta_{r-1}/2}}  + \frac{\kk^2 (\frac{\kk^2}{2}+1)}{q^{2(g+2) \theta_{r-1}}}\Big)
 \label{hr}
\end{equation*}
for $r \neq 0$,
and then 
 \begin{equation} \label{external-sum} H(r) \leq _{\varepsilon} \exp(\kk^2+2\kk),\end{equation}
which is what we need to prove (iii).
For $r=0$, we have that 
\begin{equation}\label{h0}
H(0) \ll (g \theta_0)^{O(1)}.
\end{equation}

\kommentar{\ccom{ $H(0)$  belongs to the term which is $o(q^g)$ in the proof of the upper bound theorem, see page 17.  So it is enough to use $H(0) = g^{O(1)},$ which is clear and we use, and we refer to (49). But we could write it here more explicitely as $H(0) \ll g^{O(1)}.$ in the equation below.}
\acom{Same comment about the $q-k^2$.} \mcom{Added it.}}

 In (i), the bound will depend on $H(0)$.
Replacing  \eqref{internal-sum} and \eqref{external-sum} in  \eqref{2-sums} and finally in \eqref{r=j+1}, it follows that 
\begin{equation}\label{nr2}E(j+1) \leq_{\varepsilon} 2 \exp(\kk^2+2\kk) \Big( \sum_{P \in I_{j+1}} \frac{1}{|P|} \Big)^{s_{j+1}} \; \frac{\left(\frac{5}{3}\right)^{cs_{j+1}/3}(2s_{j+1})!}{ 4^{s_{j+1}} \lfloor\frac{cs_{j+1}}{3}\rfloor!}.
\end{equation}

Finally, we consider the case where $r \geq j+2$ and in this case, only the mollifier contributes primes in this interval in the factors 
of \eqref{after-lemma}. It is easy to see that
$$
\prod_{r=j+2}^{\JJ } E(r) \leq
\prod_{r=j+2}^\JJ  \sum_{\substack{P \mid f_r  h_r \Rightarrow P \in I_r\\ f_r h_r^2 = \tinycube}} 
\frac{\kk^{\Omega(f_r)} \kk^{\Omega(h_r)}}{ \sqrt{|f_r h_r|}},
$$
where we used the same bound as above on the functions appearing in the mollifier, and 
we also used the fact that $\nu_{\kk \kappa}(g_r) \leq (\kk \kappa)^{\Omega(g_r)}$.
Note that $f_r h_r^2 = \cube$ is equivalent to $f_r = g_r S_r^3$ and $h_r = g_r T_r^3$ for $(S_r, T_r)=1$. Then, the term corresponding to a fixed $r$ in the product above is bounded by

\begin{align*}
 \leq \sum_{\substack{P \mid g_r \Rightarrow P \in I_r}} \frac{\kk^{2 \Omega(g_r)}}{|g_r|} 
 \sum_{\substack{P \mid S_r \Rightarrow P \in I_r}} \frac{\kk^{\Omega(S_r^3)}}{|S_r|^{3/2}} 
 \sum_{\substack{P \mid T_r \Rightarrow P \in I_r}} \frac{\kk^{\Omega(T_r^3)}}{|T_r|^{3/2} } 
=\prod_{P \in I_r} \Big( 1-\frac{\kk^2}{|P|}\Big)^{-1} \Big(1- \frac{\kk^3}{|P|^{3/2}} \Big)^{-2}.
 \end{align*}
 
\kommentar{We have 
$$ \prod_{P \in I_r} \Big( 1-\frac{l^2}{|P|}\Big)^{-1} \sim \exp (l^2 \log (\theta_r/\theta_{r-1})) \exp \Big(\sum_{k=(g+2)\theta_{r-1}}^{(g+2) \theta_r} \sum_{n=2}^{\infty} \frac{\kk^{2n}q^k}{q^{kn} k} \Big)$$
Now 
$$ \sum_{k=(g+2) \theta_{r-1}}^{(g+2) \theta_r} \frac{1}{q^{k(n-1)} k} \sim \int_{(g+2) \theta_{r-1}}^{(g+2) \theta_r} \frac{1}{x q^{x(n-1)}} \, dx \leq \frac{1}{(n-1) (\log q) g \theta_{r-1} q^{(n-1) g \theta_{r-1}}},$$ which follows from integration by parts. Then
\begin{align*}
 \exp \Big(\sum_{k=(g+2)\theta_{r-1}}^{(g+2) \theta_r} \sum_{n=2}^{\infty} \frac{\kk^{2n}q^k}{q^{kn} k} \Big)  \leq \exp \Big( \frac{l^2}{ (\log q) g \theta_{r-1}} \sum_{m=1}^{\infty} \frac{\kk^{2m}}{mq^{mg \theta_{r-1}}} \Big)=\exp \Big( - \frac{l^2}{ (\log q) g \theta_{r-1}} \log \Big( 1- \frac{l^2}{q^{g \theta_{r-1}}} \Big) \Big).
\end{align*}
So overall 
$$ \prod_{P \in I_r} \Big( 1-\frac{l^2}{|P|}\Big)^{-1} \leq e^{l^2} \Big(1- \frac{l^2}{q^{g \theta_{r-1}}} \Big)^{- \frac{l^2}{ (\log q) g \theta_{r-1}}}.$$
\acom{A similar argument shows that
\begin{align*}
 \prod_{P \in I_r} & \Big(1- \frac{\kk^3}{|P|^{3/2}} \Big)^{-2} \leq \exp \Big( \frac{1}{ (\log q) g \theta_{r-1}} \sum_{n=1}^{\infty} \frac{\kk^{3n}}{(\frac{3n}{2}-1)q^{(\frac{3n}{2}-1)g \theta_{r-1}}} \Big)\\
 & \leq \exp \Big( \frac{2 q^{g \theta_{r-1}}}{(\log q) g \theta_{r-1}} \sum_{n=1}^{\infty} \frac{\kk^{3n}}{nq^{\frac{3n}{2} g \theta_{r-1}}} \Big)= \Big(1- \frac{\kk^3}{q^{3g \theta_{r-1}/2}} \Big)^{- \frac{4 q^{g \theta_{r-1}}}{(\log q) g \theta_{r-1}}}
 \end{align*}

 Then the contribution from $r \geq j+2$ will be
 $$ \leq e^{l^2(\JJ -j-1)} \prod_{r=j+2}^\JJ   \Big(1- \frac{l^2}{q^{g \theta_{r-1}}} \Big)^{- \frac{l^2}{ (\log q) g \theta_{r-1}}} \Big(1- \frac{\kk^3}{q^{3g \theta_{r-1}/2}} \Big)^{- \frac{4 q^{g \theta_{r-1}}}{(\log q) g \theta_{r-1}}}.$$
}
}
Using the fact that $-\log(1-x) < \frac{x}{1-x}$ we get that
\begin{align*}
\prod_{P \in I_r}  \Big(1- \frac{\kk^2}{|P|} \Big)^{-1} & \leq \exp \Big( \sum_{P \in I_r} \frac{\kk^2}{|P|-\kk^2} \Big) \leq \exp \Big(\kk^2 \Big( \sum_{n=(g+2) \theta_{r-1}}^{(g+2) \theta_r} \frac{1}{n} + \sum_{n=(g+2) \theta_{r-1}}^{(g+2) \theta_r} \frac{\kk^2}{n(q^n-\kk^2)} \Big) \Big) \\
& \leq_{\epsilon} \exp \Big( \kk^2+\frac{\kk^4}{q^{(g+2) \theta_{r-1}}-\kk^2} \Big).
\end{align*}
Similarly 
\begin{align*}
\prod_{P \in I_r}  \Big( 1-\frac{\kk^3}{|P|^{3/2}} \Big)^{-2}  & \leq \exp \Big(2 \sum_{P \in I_r} \frac{\kk^3}{|P|^{3/2}-\kk^3} \Big) \leq \exp \Big(2\kk^3 \sum_{n=(g+2) \theta_{r-1}}^{(g+2) \theta_r} \Big( \frac{1}{nq^{n/2}}+O \Big(\frac{1}{nq^{2n}} \Big) \Big) \Big) \\
& \leq_{\epsilon} \exp \Big( \frac{2\kk^3}{q^{(g+2) \theta_{r-1}/2}}+O \Big(\frac{1}{q^{2(g+2) \theta_{r-1}}} \Big) \Big).
\end{align*}

Then the contribution from $r \geq j+2$ will be bounded by
\begin{align} &\leq_{\epsilon} e^{\kk^2(\JJ -j-1)} \prod_{r=j+2}^\JJ  \exp \Big( \frac{\kk^4}{q^{(g+2) \theta_{r-1}}-\kk^2}+ \frac{2\kk^3}{q^{(g+2) \theta_{r-1}/2}}+ O \Big( \frac{1}{q^{2(g+2) \theta_{r-1}}} \Big) \Big) \nonumber \\
& \leq_\varepsilon e^{\kk^2(\JJ -j-1)}.
\label{nr3}
\end{align}

Combining the contribution of the intervals $I_r$ with $r \leq j$ from \eqref{nr1},  the contribution of the interval $I_{ j+1}$ from  \eqref{nr2} and  the contribution of the intervals $I_r$ with $j+2\leq r \leq \JJ $ from \eqref{nr3}, we get the bound of the last inequality.

We prove the first inequality corresponding to ``$j=-1$" in the same way, except that the bound for $H(r)$ in \eqref{external-sum}  is not valid for $r=0$, so we just keep $H(0)$ on the right-hand side. The second inequality corresponds to $j=J$.

\kommentar{\acom{I find the paragraph above a bit confusing. The first inequality corresponds to $j=-1$ and the only difference is in the bound for $H(r)$ that we get in \eqref{hr}. The second inequality corresponds to $j=\JJ $.}
\ccom{Is that better?}}
 
\end{proof}

\section{Squares of the primes}
\label{square-primes}

 In this section we prove an upper bound for the average over the square of the primes appearing in the $k^{\text{th}}$ moment. Our proof is similar to \cite{Harper}, but it is simpler because we separate the primes and the square of the primes from the start by using Cauchy--Schwatz in order to deal with the mollifier.

 We recall that
$$ S_{j,\kk}(\chi)=\exp  \left(\kk
\Re\left( \sum_{\deg(P) \leq (g+2) \theta_j/2} \frac{   \chi(P)b(P;j)}{|P|}\right)\right),$$
where the positive weights $b(P;j)$ are defined by \eqref{def-bp}. Then, $b(P;j) \leq \frac12$, which is the only property that we use in this section.

    \begin{lem} \label{lemma-square-primes} Let  $S_{j,\kk}$ be the sum defined by \eqref{S_{j,\kk}} and let $\beta >1$. For $j=0,\ldots,\JJ $ we have
$$\sum_{\chi \in \mathcal{C}(g)}S_{j,\kk}(\chi)^2 \leq q^{g+2} \Bigg( \exp\Big(\kk+\frac{2\kk}{\beta-1} \Big)+ \frac{3 e^{\kk (\gamma+1)} }{4}  \sum_{m=1}^{\infty} \exp \Big(\kk \log m+ \frac{2\kk}{\beta^{m}(\beta-1)} \Big) \frac{ \beta^{4m}}{ q^{2m}} \Bigg) .$$ 
In particular, choosing $\beta =2$ and using that $q\geq 5$, we have 
\begin{equation}
 \sum_{\chi \in \mathcal{C}(g)}S_{j,k}(\chi)^2 \leq q^{g+2}  \mathcal{S}_k, 
 \label{sk}
 \end{equation}
where
\[\mathcal{S}_k:=e^{3\kk}+ \frac{4 k!e^{\kk (\gamma+2)} }{3} \left(\frac{25}{9}\right)^{\kk},\]
and in particular 
\[\mathcal{S}_2\approx 3967.15\dots.\]
\kommentar{\acom{I agree with the bound above, but then for $\mathcal{S}_k$, I get
$$ \frac{3}{4} e^{k(\gamma+1)} \Big(\frac{25}{9} \Big)^{k} \frac{16k!}{9}.$$ There should be some $k!$ in the bound for $\mathcal{S}_k$, right?}\mcom{You're right, so sorry!!!!!}

\mcom{Old thing: For the second moment ($k=2$) we have
$$ \sum_{\chi \in \mathcal{C}(g)}S_{j,2}(\chi)^2 \leq q^{g+2} \Big( e^6+ \frac{12 e^{2 (\gamma+1)} q^2(q^2+16) }{(q^2-16)^3} \Big) \leq S q^{g+2},$$
where we choose $S = e^6+ \displaystyle \frac{4100 e^{2(\gamma+1)}}{243}$ independently of $q$. }\acom{See the constant. We're using the fact that $q \geq 5$.} 
}
\label{estimates-squares}
\end{lem}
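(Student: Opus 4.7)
The plan is to exploit that $S_{j,\kk}(\chi)^2 = \exp\!\big(\kk(A(\chi)+\overline{A(\chi)})\big)$ where
\[
A(\chi) := \sum_{\deg P \leq (g+2)\theta_j/2} \frac{\chi(P)\,b(P;j)}{|P|}
\]
has coefficients of size $1/|P|$ rather than $1/\sqrt{|P|}$. Because the resulting Dirichlet polynomial measures $\log|L(1,\overline{\chi})|$ rather than $\log|L(\tfrac12,\chi)|$, the problem is morally the much easier question of bounding $2\kk$-th moments of $|L(1,\chi)|$. The plan follows the Harper-style decomposition used in the unitary case, but avoids the complications created there by the mollifier, which has already been separated off by the Cauchy--Schwarz step preceding this lemma.

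First I would partition the family $\mathcal{C}(g)$ by the size of $|A(\chi)|$: fix $\beta>1$ and for each $\chi$ set $m(\chi)$ to be the smallest nonnegative integer with $|A(\chi)| \leq \beta^{m(\chi)}$. On the tame set $\{m(\chi)=0\}$, I would Taylor-expand $\exp(\kk(A+\overline{A}))$ and invoke Lemma~\ref{mt}: the sum over cubic characters forces $fh^2=\tinycube$, and the change-of-variables used in the proof of Lemma~\ref{estimates} (writing $f\to g S^3$, $h\to gT^3$ with $(S,T)=1$) reduces the resulting diagonal to an Euler product bounded by $\exp(O_\kk(\sum_P |P|^{-2}))$. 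Splitting the prime degrees into $\beta$-adic blocks and tracking the constants via the function-field Mertens theorem $\sum_{\deg P \leq N}|P|^{-1} = \log N + M_q + o(1)$, together with the geometric identity $\sum_{i\geq 1}\beta^{-i}=1/(\beta-1)$, would then produce the displayed main term $\exp(\kk + 2\kk/(\beta-1))$.

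For the tail $m\geq 1$ I would combine the crude pointwise bound $\exp(2\kk\,\Re A(\chi))\leq \exp(2\kk\beta^m)$ with the Markov-type inequality
\[
\#\{\chi:m(\chi)=m\}\leq \beta^{-2\ell(m-1)}\sum_{\chi\in\mathcal{C}(g)}|A(\chi)|^{2\ell},
\]
for an integer $\ell=\ell(m)$ to be optimized. The moment $\sum_\chi |A(\chi)|^{2\ell}$ is bounded by expanding $|A|^{2\ell}$ into a Dirichlet polynomial indexed by pairs $(f,h)$ with $\Omega(f)=\Omega(h)=\ell$ supported on primes of degree $\leq (g+2)\theta_j/2$, applying Lemma~\ref{mt} to get the factor $q^{g+2}$ together with the cube condition $fh^2=\tinycube$, and running the Euler-product reduction one more time. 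Optimizing $\ell$ to balance the Markov gain $\beta^{-2\ell(m-1)}$ against $\exp(2\kk\beta^m)$ gives a contribution of the shape $\exp(\kk\log m + 2\kk/(\beta^m(\beta-1)))\,\beta^{4m}/q^{2m}$: the factor $\beta^{4m}$ comes from the trade-off with $\exp(2\kk\beta^m)$, and $q^{-2m}$ from the convergence rate of the moment series in the function-field setting. The explicit prefactor $\tfrac{3}{4}e^{\kk(\gamma+1)}$ absorbs the Mertens constant and the combinatorial count arising from the cube classification.

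The main obstacle is not the convergence (which simply requires $\beta^2<q$, hence $\beta=2$ for $q\geq 5$) but the explicit bookkeeping: matching the constants $\tfrac34$, $e^{\kk(\gamma+1)}$ and $\beta^{4m}$ needs a careful accounting of the Mertens error terms together with the number of ways $fh^2=\tinycube$ can occur with prescribed $\Omega(f),\Omega(h)$. Once this is in place, specializing $\beta=2$, $q\geq 5$ and summing the series $\sum_{m\geq 1} m^\kk(16/25)^m$ (recognized as a polylogarithm of $\beta^4/q^2=16/25$) gives the closed-form constant $\mathcal{S}_\kk = e^{3\kk}+\tfrac{4\kk!\,e^{\kk(\gamma+2)}}{3}(25/9)^\kk$, and evaluating at $\kk=2$ yields $\mathcal{S}_2\approx 3967.15$.
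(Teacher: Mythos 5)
There is a genuine gap, and it lies in the choice of stratification. You partition the family according to the size of the \emph{full} polynomial $A(\chi)=\sum_{\deg P\le (g+2)\theta_j/2}\chi(P)b(P;j)/|P|$, so on the exceptional set $\beta^{m-1}<|A(\chi)|\le\beta^m$ your only pointwise bound is $S_{j,\kk}(\chi)^2\le\exp(2\kk\beta^m)$, which is exponentially large in the threshold. To beat this by your Markov inequality you would need the $2\ell$-th moment with $\ell$ of size roughly $\beta^{2m}$, but the order of the moment is capped: for Lemma~\ref{mt} to annihilate the non-cube terms you need $\deg(fh^2)\le (g+2)/2$, i.e.\ $\ell\le 1/(3\theta_j)$, a constant independent of $g$ (a constraint you never address). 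Since the trivial bound on $|A(\chi)|$ is about $\tfrac12\log\big((g+2)\theta_j/2\big)$, the range $\beta^m\asymp\log g$ genuinely occurs, and there the penalty $\exp(2\kk\beta^m)\asymp g^{\kk}$ overwhelms the Markov saving $\beta^{-2\ell(m-1)}\asymp(\log g)^{-2\ell}$ with bounded $\ell$; the tail contribution is then of size $q^{g+2}g^{\kk}(\log g)^{-O(1)}$, not $O(q^{g+2})$. This is exactly the obstruction the paper's proof is designed to avoid: it decomposes the sum into blocks $F_m(\chi)$ over primes of degree \emph{exactly} $m$, with the exceptional event ``$|\Re F_m(\chi)|>\beta^{-m}$ and all higher blocks small''. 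On that event the pointwise cost is only $\exp\big(\kk(\log m+\gamma+1)+O(\kk\beta^{-m})\big)$, because the low blocks are bounded trivially by the harmonic-type sum $\sum_{l\le m}1/(2l)$, while the density of the event is controlled by a \emph{fixed} fourth moment of the single block, $(\beta^m\Re F_m)^4$, whose diagonal (via $fh^2=\tinycube$, forcing $f=h=b$ with $\Omega(b)=2$) gives the decay $\tfrac34\,\beta^{4m}q^{-2m}$. A polynomial-in-$m$ penalty against an exponentially decaying density makes the sum over $m$ converge; your exponential-in-$\beta^m$ penalty against a fixed-rate exponential decay does not.

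Relatedly, the bookkeeping you sketch cannot produce the stated constants from your setup: in the paper $q^{-2m}$ comes from the two primes of norm $q^m$ in the diagonal of the degree-$m$ block (there is no mechanism tying a $q^{-2m}$ to your parameter $m$, since your primes range over all degrees), $e^{\kk(\gamma+1)}$ comes from the harmonic sum $\sum_{l\le m}1/(2l)\le\tfrac12(\log m+\gamma+1)$ over the low blocks (not from a function-field Mertens constant), and $3/4=4!/2^5$ comes from the fixed fourth-power Chebyshev step (no optimization over $\ell$ is needed or used). Your treatment of the ``tame'' set is also more complicated than necessary: for characters with every block small one simply uses the pointwise bound $\exp(\kk+2\kk/(\beta-1))$ times the trivial count; no Taylor expansion or averaging is required there. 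The fix is to adopt the block decomposition: define $\mathcal{F}(m)$ by the largest degree at which $|\Re F_m(\chi)|$ exceeds $\beta^{-m}$, bound $S_{j,\kk}$ pointwise on $\mathcal{F}(m)$ using the trivial estimate on degrees $\le m$ and the thresholds on degrees $>m$, and estimate $\#\mathcal{F}(m)$ by the fourth moment of $\beta^m\Re F_m$ via Lemmas~\ref{combinatorics} and~\ref{mt}.
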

 \begin{proof}
 Let 
 $$F_m(\chi;j) = \sum_{P \in \mathcal{P}_m} \frac{\chi(P) b(P;j)}{|P|},$$
 where the sum is over the monic irreducible polynomials of degree $m$.
 For ease of notation, we will simply denote the sum above by $F_m(\chi)$. Let
 $$\mathcal{F}(m) = \left\{ \chi \in \mathcal{C}(g) \, : |  \, \Re F_m(\chi) | > \frac{1}{\beta^m}, \;\mbox{but}\; | \Re F_n(\chi) | \leq \frac{1}{\beta^n}, \; \forall m+1 \leq n \leq (g+2) \theta_j/2 \right\}.$$
 Note that $F_0(\chi)=\frac{1}{2}$, so the set $\mathcal{F}(0)$ is empty. Since the sets $\mathcal{F}(m)$ are disjoint, note that we have
 \begin{equation}
  \sum_{\chi \in \mathcal{C}(g)}S_{j,\kk}(\chi)^2  \leq \sum_{m=1}^{(g+2) \theta_j/2} \sum_{\chi \in \mathcal{F}(m)} S_{j,\kk}(\chi)^2 + \sum_{\chi \notin \mathcal{F}(m),  \forall m}  S_{j,\kk}(\chi)^2.
  \label{sum_char}
  \end{equation}
  If $\chi$ does not belong to any of the sets $\mathcal{F}(m)$, then 
  $$ | \Re F_n(\chi)| \leq \frac{1}{\beta^n},$$ for all $1 \leq n \leq (g+2) \theta_j/2$, so in this case we have
  $$S_{j,\kk}(\chi)^2 \leq \exp\Big(\kk+\frac{2\kk}{\beta-1} \Big).$$
  Now assume that $\chi \in \mathcal{F}(m)$ for some $1 \leq m \leq (g+2) \theta_j/2$. Then we have
  \begin{align*}
  \sum_{l=1}^{(g+2) \theta_j/2} \Re F_l(\chi) \leq \sum_{l=1}^m \frac{1}{2l} + \sum_{l=m+1}^{(g+2) \theta_j/2} \frac{1}{\beta^l} \leq \frac{1}{2} \Big( \log m+\gamma +1 \Big)+\frac{1}{\beta^{m+1}(1-\frac{1}{\beta})},
  \end{align*}
  where $\gamma$ is the Euler--Mascheroni constant. 
  Therefore in this case we have
  $$S_{j,\kk}(\chi)^2 \leq \exp \Big(\kk (\log m+\gamma+1)+ \frac{2\kk}{\beta^{m+1}(1-\frac{1}{\beta})} \Big).$$
  If $\chi \in \mathcal{F}(m)$, also note that $(\beta^m \Re F_m(\chi))^4>1$, so combining this with the inequality above, we get
 \begin{align}
 \sum_{\chi \in \mathcal{F}(m)} S_{j,\kk}(\chi)^2 \leq  \exp \Big(\kk (\log m+\gamma+1)+ \frac{2\kk}{\beta^{m+1}(1-\frac{1}{\beta})} \Big)  \sum_{\chi \in \mathcal{C}(g)} (\beta^m \Re F_m(\chi))^4. \label{interm1}
 \end{align} 
Note that by Lemma \ref{combinatorics}, 
$$  \sum_{\chi \in \mathcal{C}(g)}  (\beta^m \Re F_m(\chi))^4= \frac{4! \beta^{4m}}{2^4}  \sum_{\chi \in \mathcal{C}(g)}\sum_{\substack{ P |fh \Rightarrow P \in \mathcal{P}_m \\ \Omega(fh)=4}} \frac{b(f;j)b(f;\JJ ) \chi(f) \overline{\chi}(h) \nu(f) \nu(h)}{|fh|}.$$
Using Lemma \ref{mt} (note that $8m \leq (g+2)/2$ since $\theta_\JJ $ is small enough), we get that
\begin{align}
 \sum_{\chi \in \mathcal{C}(g)}  (\beta^{m} \Re F_m(\chi))^4 \leq q^{g+2} \frac{4!\beta^{4m}}{2^4} \sum_{\substack{ P |fh \Rightarrow P \in \mathcal{P}_m \\ \Omega(fh)=4 \\ fh^2=\tinycube}} \frac{\nu(f) \nu(h) }{|fh|}. \label{interm2}
\end{align}
When $fh^2=\cube$ we can write $f=b f_1^3, h=b h_1^3$ with $(f_1,h_1)=1$.
Since $\Omega(b^2f_1^3h_1^3)=4$ it follows that $f_1=h_1=1$ and $\Omega(b)=2$. Then using the fact that $\nu(b)^2 \leq \nu(b)$ we get
\begin{align*}
 \sum_{\substack{ P |fh \Rightarrow P \in \mathcal{P}_m \\ \Omega(fh)=4 \\ fh^2=\tinycube}} \frac{\nu(f) \nu(h) }{|fh|} \leq \sum_{\substack{P|g \Rightarrow P \in \mathcal{P}_m \\ \Omega(b)=2}} \frac{ \nu(b)}{|b|^2}= \frac{1}{2} \Big( \sum_{P \in \mathcal{P}_m} \frac{1}{|P|^2} \Big)^2 \leq \frac{1}{2q^{2m}},
\end{align*}
where for the last inequality we used the Prime Polynomial Theorem \eqref{ppt-bound}. Combining the above and equations \eqref{interm1} and \eqref{interm2}, we get that 
\begin{align}
\label{interm3}
\sum_{m=1}^{(g+2) \theta_j/2} \sum_{\chi \in \mathcal{F}(m)} S_{j,\kk}(\chi)^2 \leq q^{g+2} \sum_{m=1}^{(g+2)\theta_j/2} \exp \Big(\kk (\log m+\gamma+1)+ \frac{2\kk}{\beta^{m+1}(1-\frac{1}{\beta})} \Big) \frac{4! \beta^{4m}}{2^5 q^{2m}}.
\end{align}
Note that for any $1<\beta<\sqrt{q}$, the expression above will be $\ll q^{g+2}$. 

Now we take $\beta=2$. We use the fact that $\exp( 2k/(\beta^m(\beta-1))) \leq  e^k$, and the fact that $$\sum_{m=1}^{\infty} m^k x^m \leq \frac{xk!}{(1-x)^{k+1}},$$ for $x<1$. Since $q \geq 5$, using the inequality above and \eqref{interm3}, inequality \eqref{sk} follows.

\kommentar{\acom{We should find $b$ which minimizes the bound above. Also, here we take $a=4$ using the notation we have in the previous version; it seems a bit suspicious that the choice of $a$ is so easy, and it also seems we don't need to split into ranges for $m$. Maybe I'm missing something. Let me know what you think.}}
 \end{proof}

 
 \section{Upper bounds for moments of $L$--functions}
 \label{ub_sound}
 Here, we will prove the following upper bound. 
 \begin{prop} 
 \label{lemma_sound}
 For any positive real number $k$ and any $\varepsilon>0$, we have
 $$\sum_{\chi \in \mathcal{C}(g)} |{\textstyle L(\frac{1}{2},\chi)}|^{2k}  \ll q^{g+2} g^{k^2+\varepsilon}. $$
 \end{prop}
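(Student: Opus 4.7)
Plan. I follow the strategy of the proof of Theorem \ref{moll_ub} but with the mollifier $M(\chi;1/\kappa)$ replaced by the constant $1$. The starting point is Proposition \ref{prop-cases} applied with $k$ replaced by $2k$: for each $\chi \in \mathcal{C}(g)$, either $\chi\not\in\mathcal{T}_0$, or
\begin{align*}
|L(\tfrac12,\chi)|^{2k} &\leq \exp(2k(1/\theta_J+\eta))\,D_{J,2k}(\chi)\,S_{J,2k}(\chi) \\
&\qquad + \sum_{\substack{0\leq j\leq J-1\\ j<u\leq J}} \exp(2k(1/\theta_j+\eta))\,D_{j,2k}(\chi)\,S_{j,2k}(\chi)\Bigl(\tfrac{2ke^2\Re P_{I_{j+1}}(\chi;u)}{\ell_{j+1}}\Bigr)^{s_{j+1}}.
\end{align*}
I then split $\sum_{\chi\in\mathcal{C}(g)}|L(\tfrac12,\chi)|^{2k}$ according to whether $\chi\in\mathcal{T}_0$.

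For $\chi\not\in\mathcal{T}_0$, I use the Lindel\"of-type bound \eqref{lindelof} to estimate $|L(\tfrac12,\chi)|^{2k}\ll q^{2k\varepsilon(g+2)}$, and control the number of such $\chi$ by a Chebyshev-style argument: since $|\Re P_{I_0}(\chi;u)|>\ell_0/(2ke^2)$ for some $u$, the cardinality is bounded by the $2s_0$-th moment of $\Re P_{I_0}(\chi;u)$ from a version of Lemma \ref{estimates}(i) with the mollifier removed. Choosing $s_0$ as in the proof of Theorem \ref{moll_ub} shows this contribution is $o(q^{g+2})$.

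For $\chi\in\mathcal{T}_0$, the key point is that one must bound $\sum_\chi D_{J,2k}(\chi)S_{J,2k}(\chi)$ \emph{directly} rather than applying Cauchy--Schwarz; indeed, without the mollifier, $\sum_\chi D_{J,2k}(\chi)^2 \gg q^{g+2}g^{4k^2}$, so a Cauchy--Schwarz split would yield $g^{2k^2}$, losing a factor $g^{k^2}$. I therefore expand $D_{J,2k}(\chi)S_{J,2k}(\chi)$ as a single Dirichlet polynomial in $\chi,\overline{\chi}$ via Lemma \ref{combinatorics} and the Taylor expansion of $S_{J,2k}$, and apply Lemma \ref{mt} to perform the character average; the cube condition and the identity $\chi(P^2)=\overline{\chi}(P)$ for cubic $\chi$ combine the prime and square-of-prime contributions at each prime. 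The dominant diagonal configuration produces the Euler product
$$\prod_{\deg P\leq (g+2)\theta_J}\Bigl(1+\tfrac{k^2 a(P;J)^2}{|P|}+O(|P|^{-3/2})\Bigr) \leq_\varepsilon \exp\Bigl(k^2\!\!\sum_{\deg P\leq (g+2)\theta_J}\!\!\tfrac{a(P;J)^2}{|P|}\Bigr) \leq_\varepsilon g^{k^2+\varepsilon},$$
by the Prime Polynomial Theorem \eqref{ppt} and Mertens' estimate. The cross terms involving $(\Re P_{I_{j+1}}(\chi;u))^{s_{j+1}}$ are handled by the same expansion and yield a geometric sum in $(j,u)$ which, as in \eqref{tobound2}, does not exceed the main term.

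The main obstacle is the combined Euler product analysis of $\sum_\chi D_{J,2k}(\chi)S_{J,2k}(\chi)$: without the mollifier, the cancellation used in Lemma \ref{estimates}(ii), where $\alpha_j(P)=k^2(a(P;j)-a(P;J))^2$ is uniformly small, is lost, and one must verify that the resulting coefficient is precisely $k^2 a(P;J)^2$ (not larger), so that the sum $\sum_P a(P;J)^2/|P|\sim\log((g+2)\theta_J)$ contributes only one power of $\log g$ to the exponent, producing the polynomial factor $g^{k^2+\varepsilon}$.
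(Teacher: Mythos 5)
Your treatment of the exceptional set $\chi\notin\mathcal{T}_0$ does not close, and this is a genuine gap rather than a technicality. You propose to multiply the pointwise bound $|L(\frac12,\chi)|^{2k}\ll q^{2k\varepsilon(g+2)}$ from \eqref{lindelof} by a Chebyshev bound for $\#\{\chi\notin\mathcal{T}_0\}$ coming from the $2s_0$-th moment of $\Re P_{I_0}(\chi;u)$. But the moment computation (Lemma \ref{mt}, hence Lemma \ref{estimates}(i)) forces $s_0\theta_0\ll 1$, i.e.\ $s_0\ll(\log g)^{1000}$, so the best saving you can extract is of the shape $\exp\bigl(-C(\log g)^{A}\bigr)$ for some fixed power $A$ --- sub\-exponential in $g$. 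Any admissible pointwise bound on $|L(\frac12,\chi)|$, on the other hand, loses a factor that is exponential-type in $g$: \eqref{lindelof} loses $q^{2k\varepsilon g}$ for fixed $\varepsilon$, and even the optimal pointwise bound obtainable from Lemma \ref{like-chandee} (take $N\asymp\log_q g$) only gives $\log|L(\frac12,\chi)|\ll g\log q/\log g$. In either case the product (worst-case value)$\times$(cardinality of the bad set) is far larger than $q^{g}g^{k^2}$, so the claimed ``$o(q^{g+2})$'' is false. This is precisely why the paper does \emph{not} obtain Proposition \ref{lemma_sound} by running the Harper-type decomposition without the mollifier: in the proof of Theorem \ref{moll_ub} the bad set is handled in \eqref{t2} by Cauchy--Schwarz against $\sum_\chi|L(\frac12,\chi)|^{2k}$, i.e.\ it \emph{uses} Proposition \ref{lemma_sound} as an input (there the loss is only $g^{O(1)}$, which the subexponential Chebyshev saving does beat). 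Your proposal inverts this logical order and the substitute step fails quantitatively. The paper instead proves the almost-sharp bound by Soundararajan's large-deviation method: introduce $N(V)=\#\{\chi:\log|L(\frac12,\chi)|\geq V\}$, bound it for each $V$ via Lemma \ref{like-chandee} and Lemma \ref{dir_pol_2} with Chebyshev exponents $\el$ chosen as functions of $V$, and integrate $2k\int e^{2kV}N(V)\,dV$; the balance between size and rarity is then struck at every scale $V$, and no worst-case pointwise bound is ever multiplied by a cardinality.

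Two further remarks on the $\mathcal{T}_0$ part, which is otherwise morally Harper's argument and could be made to work once the bad set is fixed. First, you cannot simply ``apply Lemma \ref{mt}'' to the product $D_{J,2k}(\chi)S_{J,2k}(\chi)$: $S_{J,2k}$ is a genuine exponential, so its Dirichlet-series expansion has terms of unbounded degree, violating the degree restriction \eqref{nicecondition}; one needs either a truncation/decomposition argument as in Lemma \ref{lemma-square-primes}, or (simpler, and what the paper's proof of Proposition \ref{lemma_sound} actually does) a pointwise bound $O(\log\log g)$ on the square-of-primes sum using the Weil bound \eqref{weil}, which removes $S_{J,2k}$ from the averaging altogether at the cost of a harmless $g^{\varepsilon}$. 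Second, your diagonal analysis giving the per-prime coefficient $k^{2}a(P;J)^{2}/|P|$, hence $g^{k^2}$, is the right heuristic for this unitary family, but it is not a shortcut around the first issue: without a correct treatment of $\chi\notin\mathcal{T}_0$ the proposition is not proved.
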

 We first prove the following result.

\kommentar{\begin{lem} \label{dir_pol}
Let $k$ and $y$ be integers such that $4ky \leq g/2+1$. Then for any complex numbers $a(P)$ with $|a(P) | \ll 1$ we have
 $$  \sum_{\chi \in \mathcal{C}(g)} \Big(  \Re \sum_{\deg(P) \leq y} \frac{ \chi(P) a(P)}{\sqrt{|P|}} \Big)^{2k} \ll q^g \frac{(2k)!}{(2k/3)!4^{2k/3}} \Big(  \sum_{\deg(P) \leq y} \frac{|a(P)|^2}{|P|} \Big)^{k}.    $$ 
 \end{lem}
 }

\begin{lem}
 \label{dir_pol_2} \label{dir_pol}
 Let $\el$ and $y$ be integers such that $3 \el y \leq g/2+1$. For any complex numbers $a(P)$ with $|a(P)| \ll 1$ we have
 \begin{equation}
  \sum_{\chi \in \mathcal{C}(g)} \Big|   \sum_{\deg(P) \leq y} \frac{ \chi(P) a(P)}{\sqrt{|P|}} \Big|^{2\el} \ll q^g  \frac{ (\el!)^2 5^{2\el/3} }{\lfloor 2\el/3 \rfloor ! 9^{\el/3}} \Big(  \sum_{\deg(P) \leq y} \frac{|a(P)|^2}{|P|} \Big)^\el.
 \label{first}
 \end{equation}
 If we also assume that $\el \leq \displaystyle \Big(\sum_{\deg(P) \leq y} \frac{|a(P)|^2}{|P|}\Big)^{3-\varepsilon}$, then we have
 \begin{equation}
   \sum_{\chi \in \mathcal{C}(g)} \Big|   \sum_{\deg(P) \leq y} \frac{ \chi(P) a(P)}{\sqrt{|P|}} \Big|^{2\el} \ll q^g  \el! \Big(  \sum_{\deg(P) \leq y} \frac{|a(P)|^2}{|P|} \Big)^\el.    
   \label{second}
   \end{equation}
 \end{lem}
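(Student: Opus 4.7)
The plan is to follow the same strategy as the estimate of $E(j+1)$ in Lemma~\ref{estimates}, expanding the power, averaging over characters, and then parametrizing the resulting cube condition. Extending $a$ completely multiplicatively and applying Lemma~\ref{combinatorics}, we write
$$\Big|\sum_{\deg(P)\leq y}\frac{\chi(P)a(P)}{\sqrt{|P|}}\Big|^{2\el} = (\el!)^2\sum_{\substack{P\mid fh\Rightarrow \deg(P)\leq y\\ \Omega(f)=\Omega(h)=\el}}\frac{a(f)\overline{a(h)}\,\nu(f)\nu(h)\,\chi(f)\overline{\chi(h)}}{\sqrt{|fh|}}.$$
Summing over $\chi\in\mathcal{C}(g)$ and applying Lemma~\ref{LZ-Lemma4.1s}, the hypothesis $3\el y\leq g/2+1$ ensures $\deg(fh^2)\leq(g+2)/2$, so only pairs with $fh^2=\cube$ survive, each contributing at most $q^{g+2}$ (after using $\phi_{q^2}(fh^2)/|fh^2|_{q^2}\leq 1$).

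To bound the resulting sum I would parametrize the cube condition exactly as in Section~\ref{friendly-lemma}: write $f=gS^3$, $h=gT^3$ with $(S,T)=1$, $\Omega(S)=\Omega(T)=s$ and $\Omega(g)=\el-3s$ for some $0\leq s\leq \el/3$. Using $|a(P)|\ll 1$ and the multiplicative inequalities $\nu(gS^3)\leq\nu(g)\nu(S^3)$ and $\nu(S^3)\leq(1/6)^{\Omega(S)}\nu(S)$, and dropping the coprimality constraint, the sum over $g$ with $\Omega(g)=\el-3s$ contributes $A^{\el-3s}/(\el-3s)!$ where $A=\sum_{\deg(P)\leq y}|a(P)|^2/|P|$, while each of the $S$- and $T$-sums contributes $B^s/(6^s s!)$ for some absolute constant $B$. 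The resulting multinomial sum over $(i,j,k)$ with $2i+3j+3k=2\el$ is precisely of the form evaluated in~\eqref{internal-sum} (with $\alpha=2\el$ and effective $c=2$), yielding
$$\sum_{\substack{fh^2=\tinycube\\ \Omega(f)=\Omega(h)=\el}}\frac{|a(f)|\,|a(h)|\,\nu(f)\nu(h)}{\sqrt{|fh|}}\;\ll\; \frac{5^{2\el/3}}{9^{\el/3}\lfloor 2\el/3\rfloor!}\,A^\el.$$
Multiplying by the $(\el!)^2$ prefactor and by $q^{g+2}$ gives~\eqref{first}.

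For the sharper bound~\eqref{second}, the extra hypothesis on $\el$ is designed to force the entire sum to be dominated by its diagonal $s=0$ contribution, coming from $f=h$. The diagonal itself is easy: $(\el!)^2\sum_{\Omega(f)=\el}|a(f)|^2\nu(f)^2/|f|\leq \el!\,A^\el$, using $\nu(f)^2\leq\nu(f)$ and the identity $\sum_{\Omega(f)=\el}\nu(f)|a(f)|^2/|f|=A^\el/\el!$. For each $s\geq 1$, the off-diagonal contribution is at most $(\el!)^2 D^s A^{\el-3s}/[(\el-3s)!(s!)^2]$ for an absolute constant $D$, so the ratio to the diagonal is bounded by $D^s\binom{\el}{3s}\binom{3s}{s,s,s}/A^{3s}$. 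Applying Stirling together with the hypothesis, which makes $\el$ sufficiently small compared to $A$, one checks this ratio decays geometrically in $s$ and sums to $o(1)$, so the full sum is $O(\el!\,A^\el)$. The main obstacle will be the careful combinatorial bookkeeping in the parametrization step needed to extract the sharp factor $5^{2\el/3}/(9^{\el/3}\lfloor 2\el/3\rfloor!)$ for~\eqref{first}, and the delicate comparison of each off-diagonal level to the diagonal that quantifies how the hypothesis in~\eqref{second} kills the contribution of $s\geq 1$.
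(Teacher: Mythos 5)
Your treatment of \eqref{first} is essentially the paper's own argument: expand the $2\el$-th power via Lemma \ref{combinatorics}, average so that only the terms with $fh^2=\cube$ survive (the degree condition $3\el y\le g/2+1$ is used exactly as in the paper), parametrize $f=gS^3$, $h=gT^3$ with $(S,T)=1$, and run a trinomial count; invoking the computation of \eqref{internal-sum} with $\alpha=2\el$ does reproduce the factor $(5/3)^{2\el/3}/\lfloor 2\el/3\rfloor!=5^{2\el/3}/(9^{\el/3}\lfloor 2\el/3\rfloor!)$, so apart from bookkeeping this half is the same proof as in the paper.

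Your proof of \eqref{second}, however, has a genuine gap. You assert that the hypothesis $\el\le A^{3-\varepsilon}$, where $A=\sum_{\deg(P)\le y}|a(P)|^2/|P|$, ``makes $\el$ sufficiently small compared to $A$'' and hence that the off-diagonal levels $s\ge 1$ are negligible relative to the diagonal $s=0$, with a ratio decaying geometrically in $s$. This is false precisely in the regime the lemma is built for: the hypothesis allows $\el$ as large as $A^{3-\varepsilon}$, i.e.\ far \emph{larger} than $A$, and this is how the lemma is applied in Proposition \ref{lemma_sound} (there $A\asymp\log g$ while $\el=\lfloor V_1^2/\log g\rfloor$ can be of size $(\log g)^{3-2\varepsilon}$). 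From your own level-$s$ bound, the ratio of the level-$s$ contribution to the diagonal $\el!\,A^{\el}$ is $\frac{\el!}{(\el-3s)!}\cdot\frac{D^s}{(s!)^2A^{3s}}$ (note the $(s!)^2$, not the $(s!)^3$ implicit in your binomial expression); at $s=1$ this is of order $\el^3/A^3$, which is $\gg 1$ as soon as $\el\gg A$. Since the coefficients $a(P)$ in the application are positive, there is no cancellation: the levels $1\le s\lesssim \el^{3/2}/A^{3/2}$ genuinely exceed the diagonal, and the sum over $s$ is not $O(1)$ times the $s=0$ term. So diagonal dominance is not the mechanism here, and your geometric-decay claim cannot be repaired by adjusting constants. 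The paper proceeds differently: after reducing to the one-parameter sum in \eqref{common}, it does not compare every term to the $i=\el$ term, but locates the maximum of the terms over $i$ (an interior point $i_0>\el/2$ determined by $i_0^3=x^3(\el-i_0)^2$) via Stirling's formula, uses $\el\le x^{3-\varepsilon}$ to control the value at that maximum, and absorbs the number of terms with the spare factor $3^{2\el/3}$. To make your argument work you would have to replace the diagonal comparison by such a maximum-location analysis over $s$.
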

 \begin{proof}
 We extend $a(P)$ to a completely multiplicative function. We have
 \begin{equation}
  \Big|   \sum_{\deg(P) \leq y} \frac{ \chi(P) a(P)}{\sqrt{|P|}} \Big|^{2\el} = (\el!)^2 \sum_{\substack{P|fh \Rightarrow \deg(P) \leq y \\ \Omega(f)=\el \\ \Omega(h)=\el}} \frac{a(f) \overline{a(h)} \nu(f) \nu(h) \chi(fh^2)}{\sqrt{|fh|}}.
  \label{expand}
  \end{equation}
 Note that
 \begin{equation*}
  \sum_{\chi \in \mathcal{C}(g)} \Big|   \sum_{\deg(P) \leq y} \frac{ \chi(P) a(P)}{\sqrt{|P|}} \Big|^{2\el} \leq \sum_{F \in \mathcal{M}_{q^2,g/2+1}} \Big|   \sum_{\deg(P) \leq y} \frac{ \chi_F(P) a(P)}{\sqrt{|P|}} \Big|^{2\el}.
 \end{equation*}
 Using the above and equation \eqref{expand}, note that if $fh^2$ is not a cube, then the character sum over $F \in \mathcal{M}_{q^2,g/2+1}$ vanishes since $\deg{(fh^2}) \leq 3 \el y \leq g/2+1$ by hypothesis. Then
 \begin{equation*}
 \sum_{\chi \in \mathcal{C}(g)} \Big|   \sum_{\deg(P) \leq y} \frac{ \chi(P) a(P)}{\sqrt{|P|}} \Big|^{2\el} \leq q^{g+2} (\el!)^2 \sum_{\substack{P|fh \Rightarrow \deg(P) \leq y \\ \Omega(f)=\el \\ \Omega(h)=\el \\ fh^2=\tinycube}} \frac{a(f) \overline{a(h)} \nu(f) \nu(h) \phi_{q^2}(fh^2)}{\sqrt{|fh|}|fh^2|_{q^2}}.
 \end{equation*}
 The condition $fh^2=\cube$ can be rewritten as $f=bf_1^3$ and $h=bh_1^3$ with $(f_1,h_1)=1$. Then we get that
 \begin{align}
 \sum_{\chi \in \mathcal{C}(g)} & \Big|   \sum_{\deg(P) \leq y} \frac{ \chi(P) a(P)}{\sqrt{|P|}} \Big|^{2\el} \leq q^{g+2} (\el!)^2 \sum_{\substack{P|b \Rightarrow \deg(P) \leq y \\ \Omega(b) \leq \el \\ \Omega(b) \equiv \el \pmod 3}} \frac{ |a(b)|^2 \nu(b)}{|b|} \Big( \sum_{\substack{ P|f \Rightarrow \deg(P) \leq y \\ \Omega(f) = (\el- \Omega(b))/3 }} \frac{ |a(f)|^3 \nu(f)}{|f|^{3/2} 3^{\Omega(f)}} \Big)^2.  \nonumber \\
 &= q^{g+2} (\el!)^2 \sum_{\substack{P|b \Rightarrow \deg(P) \leq y \\ \Omega(b) \leq \el \\ \Omega(b) \equiv \el \pmod 3}} \frac{ |a(b)|^2 \nu(b) }{|b|} \frac{1}{(( (\el-\Omega(b))/3)!)^2} \Big( \sum_{\deg(P) \leq y}  \frac{|a(P)|^3}{3|P|^{3/2}} \Big)^{2(\el-\Omega(b))/3}, \label{in2}
 \end{align}
 where we used the fact that $\nu(ab) \leq \nu(a) \nu(b)$, $\nu(f^3) \leq \nu(f)/3^{\Omega(f)}$ and $\nu(b)^2 \leq \nu(b)$, and we ignored the condition that $(f_1,h_1)=1$.

 We further get that the above is
\begin{align} \label{common}
&\ll q^{g} (\el!)^2 \sum_{\substack{i=0 \\ i \equiv \el \pmod 3}}^\el \Big( \sum_{\deg(P) \leq y} \frac{|a(P)|^2}{|P|} \Big)^i \frac{1}{  i! ( (\el-i)/3)!)^23^{2(\el-i)/3}} \\
& \ll q^{g} \frac{ (\el!)^2}{9^{\el/3}} \Big( \sum_{\deg(P) \leq y} \frac{|a(P)|^2}{|P|} \Big)^\el \sum_{j=0}^{\lfloor \el/3\rfloor} \frac{9^j}{(3j)! ( \frac{\el}{3}-j)!^2}, 
\label{common2}
\end{align}
where we use the fact that $|a(P)| \ll 1$ and $\sum_{n=1}^{\infty} \frac{1}{nq^{n/2}}<1$, and that the sum over primes in \eqref{in2} is bounded, to get the first line above. Using the trinomial expansion formula, we get
\[ \sum_{j=0}^{\lfloor \el/3\rfloor} \frac{9^j}{(3j)! ( \frac{\el}{3}-j)!^2}\leq \sum_{j=0}^{\lfloor \el/3\rfloor} \frac{3^{2j}}{(2j)! ( \frac{\el}{3}-j)!^2}\leq \sum_{a+b+c=\lfloor 2\el/3\rfloor } \frac{3^a}{a!b!c!}\leq \frac{5^{2\el/3}}{\lfloor 2\el/3 \rfloor !}\]
Replacing in \eqref{common2} and then \eqref{in2}, we then get that
\begin{equation*}
  \sum_{\chi \in \mathcal{C}(g)} \Big|   \sum_{\deg(P) \leq y} \frac{ \chi(P) a(P)}{\sqrt{|P|}} \Big|^{2\el} \ll q^g \frac{ (\el!)^2 5^{2\el/3} }{\lfloor 2\el/3 \rfloor ! 9^{\el/3}} \Big(  \sum_{\deg(P) \leq y} \frac{|a(P)|^2}{|P|} \Big)^\el.
 \end{equation*}
 
\kommentar{\ccom{OLD*****
This further gives that the equation above is
\begin{align}
\ll & q^{g} \frac{ (\el!)^2}{9^{\el/3}} \Big( \sum_{\deg(P) \leq y} \frac{|a(P)|^2}{|P|} \Big)^\el \sum_{j=0}^{\el/3} \frac{9^j}{(3j)! ( \frac{\el}{3}-j)!^2} \nonumber  \\
& \ll q^{g} \frac{ (\el!)^2}{9^{\el/3}} \Big( \sum_{\deg(P) \leq y} \frac{|a(P)|^2}{|P|} \Big)^\el \Big[ \sum_{j=0}^{\el/6} \frac{9^j}{(3j)! ( \frac{\el}{3}-j)!^2} + \sum_{j=\el/6}^{\el/3} \frac{9^j}{(3j)! ( \frac{\el}{3}-j)!^2}  \Big]. \label{bd2} 
\kommentar{& \ll q^{g} \frac{ (\el!)^2}{9^{\el/3}} \Big( \sum_{\deg(P) \leq y} \frac{|a(P)|^2}{|P|} \Big)^\el  \Big[ \frac{1}{(\el/6)!} \sum_{j=0}^{\el/6} \frac{9^j}{j! ( \frac{\el}{3}-j)!}+ \sum_{j=\el/6}^{\el/3} \frac{9^j}{j! (2j)! (\frac{\el}{3}-j)!} \Big] \nonumber \\
& \ll q^{g} \frac{ (\el!)^2}{9^{\el/3}} \Big( \sum_{\deg(P) \leq y} \frac{|a(P)|^2}{|P|} \Big)^\el   \Big[ \frac{10^{\el/3}}{(\el/6)!(\el/3)!} + \frac{10^{\el/3}}{(\el/3)!^2} \Big] \ll q^{g} \frac{ (\el!)^2}{9^{\el/3}} \Big( \sum_{\deg(P) \leq y} \frac{|a(P)|^2}{|P|} \Big)^\el   \frac{10^{\el/3}}{(\el/6)! (\el/3)!},}
\end{align}
For the second term above we use the fact that $(3j)! \geq 3 j! (2j)!.$
\mcom{

One could do much simpler
\[ \sum_{j=0}^{\el/3} \frac{9^j}{(3j)! ( \frac{\el}{3}-j)!^2}\leq \sum_{j=0}^{\el/3} \frac{3^{2j}}{(2j)! ( \frac{\el}{3}-j)!^2}\leq \sum_{a+b+c=2\el/3} \frac{3^a}{a!b!c!}=\frac{5^{2\el/3}}{(2\el/3)!}\]
but is it better? I think so.
} \ccom{Very cute, what is the last formula?}

When $j<\el/6$ note that we have
\begin{equation} \frac{j!}{(3j)! (\frac{\el}{3}-j)!} \ll \frac{1}{(\el/3)!}.\label{factorial1}
\end{equation}
Indeed if we let $h(j) = \log (j!) - \log ((3j)!) - \log  ((\frac{\el}{3}-j)!))$, then using Stirling's formula and differentiating, we get that the maximum of $h$ occurs at $j_0=c_1 \sqrt{\el}+c_0$ for some constants $c_0$ and $c_1$. Plugging in on the left hand side of \eqref{factorial1} shows the desired inequality. 

Going back to \eqref{bd2}, we then get that
\begin{align*}
\eqref{bd2} & \ll q^{g} \frac{ (\el!)^2}{9^{\el/3}} \Big( \sum_{\deg(P) \leq y} \frac{|a(P)|^2}{|P|} \Big)^\el \el \Big[ \sum_{j=0}^{\el/6} \frac{9^j}{j! (\frac{\el}{3}-j)! (\el/3)!} + \frac{1}{(\el/3)!} \sum_{j=\el/6}^{\el/3} \frac{9^j}{j! (\frac{\el}{3}-j)!} \Big] \\
&\ll q^{g} \frac{ (\el!)^2}{9^{\el/3}} \Big( \sum_{\deg(P) \leq y} \frac{|a(P)|^2}{|P|} \Big)^\el \el \frac{10^{\el/3}}{(\el/3)!}.
\end{align*}}
}

Now let
$$x = \sum_{\deg(P) \leq y} \frac{ |a(P)|^2}{|P|},$$ and we assume that $\el \leq x^{3-\varepsilon}$. We claim that for $i \leq \el$ with $i \equiv \el \pmod 3$, we have
\begin{equation} \frac{x^i 3^{2i/3}}{i! ( (\el-i)/3)!^2} \ll \frac{x^\el}{\el!}.\label{in1}
\end{equation}
Using Stirling's formula, we need to show that for $\el \leq x^{3-\varepsilon}$, we have 
$$ \frac{2i}{3} \log 3+ \el \log \el - \el -i \log i +i - \frac{2(\el-i)}{3} \log \Big( \frac{\el-i}{3} \Big)+\frac{2(\el-i)}{3} \leq  (\el-i) \log x+\log C,$$ for some constant $C$.
Now let $$f(i)= i \log x+\frac{2i}{3} \log 3+ \el \log \el - \el -i \log i +i - \frac{2(\el-i)}{3} \log \Big( \frac{\el-i}{3} \Big)+\frac{2(\el-i)}{3}.$$ Then
$$f'(i) = \log(3^{2/3}x) - \log i+ \frac{2}{3} \log \Big(\frac{\el-i}{3} \Big),$$
and $f$ attains its maximum on $[0,\el]$ at $i$ with $i^3=x^3(\el-i)^2$. Since $\el \leq x^{3-\varepsilon}$ it follows that $f$ attains its maximum at some $i_0$ with $i_0>\el/2$. Indeed, if we suppose that $i_0 \leq \el/2$ then $\el-i_0 \geq \el/2$, and since $x^3>\el$ it follows that $i_0^3 > \el^3/4$ which is a contradiction since we assumed that $i_0^3 \leq \el^3/8$. Let $i_1=i_0/\el$. 
We have $1/2<i_1<1$. Then
$$f(i_0)= \el i_1 \log x+ \frac{\el(1-i_1)}{3} \log \el+ \frac{2i_0}{3} \log 3-\el-\el i_1 \log i_1+i_0-\frac{2\el(1-i_1)}{3} \log \Big(\frac{1-i_1}{3} \Big)+\frac{2\el(1-i_1)}{3}.$$
Since $1/2<i_1<1$ it follows that $$f(i_0) \leq \el \log x,$$ which establishes \eqref{in1}. Combining \eqref{common} and \eqref{in1} and since $\el /3^{2\el/3}<1$, the conclusion follows.
 \end{proof}

 \begin{proof}[Proof of Proposition \ref{lemma_sound}]
 The proof is similar to the proof of Corollary $A$ in \cite{Sound-again}. Let
 $$N(V) = \Big |\Big \{ \chi \text{ primitive cubic } \text{ genus}(\chi)=g : \log |{\textstyle L(\frac{1}{2}, \chi)} | \geq V  \Big \} \Big|.$$
 Then 
 \begin{align}
 \sum_{\chi \in \mathcal{C}(g)} |{\textstyle L(\frac{1}{2},\chi)} |^{2k} =2k \int_{-\infty}^{\infty} \exp (2kV) N(V) \, dV. \label{mom}
 \end{align}
 In equation \eqref{chandee} with $k=1$, note that we can bound the contribution from primes square by $O(\log \log g)$. Indeed, we split the sum over $P$ with $\deg(P) \leq N/2$ into primes $P$ with $\deg(P) \leq 4 \log_q g$ and primes $P$ with $4 \log_q g<\deg(P) \leq N/2$. For the first term, we use the trivial bound which gives the bound $O(\log \log g)$. For the second term, we use the Weil bound \eqref{weil} yielding an upper bound of size $o(1)$. 
 So we have
 \begin{eqnarray}
\log  | {\textstyle L(\frac{1}{2},\chi) }| &\leq &     \Re \Big( \sum_{\deg(P) \leq N} \frac{ \chi(P)(N-\deg(P)}{N|P|^{\frac{1}{2} +\frac{1}{N \log q}}}  \Big) + \frac{g+2}{N}+O(\log \log g).
\end{eqnarray}
Let $$\frac{g+2}{N} = \frac{V}{A}$$ and $N_0=N/\log g$, where
\begin{equation} \label{values-A}
A = 
\begin{cases}
\frac{\log \log g}{2} & \mbox{ if } V \leq \log g,\\
\frac{\log g}{2V} \log \log g & \mbox{ if } \log g< V \leq \frac{1}{12} \log g \log \log g, \\
6 & \mbox{ if } \frac{1}{12} \log g \log \log g  <V.
\end{cases}
\end{equation}
We only need to consider $\sqrt{\log g}<V$. Indeed, note that the contribution from $V \leq \sqrt{\log g}$ in the integral on the right-hand side of \eqref{mom} is $o(q^gg^{k^2})$,  by trivially bounding $N(V) \ll q^g$.  If $\chi$ is such that $\log |{\textstyle L(\frac{1}{2}, \chi)} | \geq V$, then
$$   \Re \Big( \sum_{\deg(P) \leq N} \frac{ \chi(P)(N-\deg(P)}{N|P|^{\frac{1}{2} +\frac{1}{N \log q}}}  \Big)  
\geq V - \frac{V}{A} + O \left( \log{\log{g}} \right) \geq  V \Big(1-\frac{2}{A} \Big),$$
for $g$ large enough since $\sqrt{\log g}<V$.

Let $$S_1(\chi)=  \left| \sum_{\deg(P) \leq N_0} \frac{ \chi(P)(N-\deg(P)}{N|P|^{\frac{1}{2} +\frac{1}{N \log q}}} \right|, \;\;\;
S_2(\chi)=  \left| \sum_{N_0 < \deg(P) \leq N} \frac{ \chi(P)(N-\deg(P)}{N|P|^{\frac{1}{2} +\frac{1}{N \log q}}} \right|.$$
Then if $\log |{\textstyle L(\frac{1}{2}, \chi)} | \geq V$,
either $$S_2(\chi) \geq V/A \;\;\;\text{or} \;\;\;S_1(\chi) \geq V(1-3/A):=V_1.$$ Let 
\begin{eqnarray*}
\mathcal{F}_1 &=& \{ \chi \text{ primitive cubic, genus}(\chi)=g : S_1(\chi) \geq V_1 \}\\
\mathcal{F}_2 &=&  \{ \chi \text{ primitive cubic, genus}(\chi)=g : S_2(\chi) \geq V/A \}.
\end{eqnarray*}

Using \eqref{first} of Lemma \ref{dir_pol_2}, we get 
$$ | \mathcal{F}_2| \leq \sum_{\chi \in \mathcal{C}(g)} \left( \frac{S_2(\chi)}{V/A} \right)^{2\el} \ll q^g \Big(\frac{A}{V} \Big)^{2\el} \frac{(\el!)^2 (25/9)^{\el/3} }{\lfloor2\el/3\rfloor !} \Big( \sum_{N_0 < \deg(P) \leq N} \frac{|a(P)|^2}{|P|} \Big)^\el,$$ for any $\el$ such that $3\el N \leq g/2+1 \iff \el \leq V/(6A)$ and where $a(P) = (N-\deg(P))/(N|P|^{1/N\log q})$. Picking $\el =6\lfloor V/(36A)\rfloor$, this gives
\begin{equation}
|\mathcal{F}_2| \ll q^g  \Big(\frac{A}{V} \Big)^{2\el} \Big(\frac{\el}{e} \Big)^{4\el/3} (5/2)^{2\el/3} (\log \log g)^\el \ll q^g \exp \Big( - \frac{V}{10 A} \log V \Big).
\label{f2}
\end{equation}

\kommentar{\acom{In order to get $N(V)= o(1)$ we need $ \el^{4/3} \log g/ V^2 \ll 1$, so $\el \ll V^{3/2} / (\log g)^{3/4}$. Pic\eling $\el= c V^{3/2}/(\log g)^{3/4}$ (for some constant $c$ which ma\eles $3^{2/3} \el^{4/3} (\log g)/ (e^{4/3} V_1^2)$ less than $1/e$ say), we would get that $$N(V) \ll \exp \Big(-c \frac{V^{3/2}}{(\log g)^{3/4}} \Big).$$
Now for $V <(2l/c)^2  (\log g)^{3/2}$ we need to consider
$$ \int_{-\infty}^{(2l/c)^2 (\log g)^{3/2}} \exp( 2lV -c \frac{V^{3/2}}{(\log g)^{3/4}} ) \, dV.$$ Now I'm not sure what the asymptotics is for this integral, but it is bounded by $\exp(a (\log g)^{3/2})$ for some constant $a$. I'm not sure if you can prove something better. 
}
}

If $\chi \in \mathcal{F}_1$ and $V \leq (\log g)^{2-\varepsilon}$ then we pick $\el=\lfloor V_1^2/\log g \rfloor$. 
Note that since $a(P)=(N-\deg(P))/(N |P|^{1/N \log g})$ we have that $\sum_{\deg(P ) \leq N_0} |a(P)|^2/|P| = \log g + o(\log g)$ and then $\el \leq (\sum_{\deg(P) \leq N_0} |a(P)|^2/|P|)^{3-\varepsilon}$. We can then apply \eqref{second} of Lemma \ref{dir_pol_2}, and we get that
$$| \mathcal{F}_1| \leq \sum_{\chi \in \mathcal{C}(g)} \left( \frac{S_1(\chi)}{V_1} \right)^{2\el} \ll q^g \sqrt{\el} \exp \Big(\el \log \Big( \frac{\el \log g}{eV_1^2} \Big) \Big) \ll q^g \frac{V}{\sqrt{\log g}} \exp \Big(- \frac{V_1^2}{\log g} \Big).
$$
If $V>(\log g)^{2-\varepsilon}$, then we pick $\el=18V$ and apply \eqref{first} to get that
$$  | \mathcal{F}_1| \ll q^g  \Big(\frac{\el^{4/3}  25^{1/3}  \log g}{e^{4/3} 4^{1/3} V_1^2 } \Big)^\el \ll q^g \exp(-2V \log V). $$

Using the above and the values for $A$ of \eqref{values-A}, 
we proved that:\\
if $\sqrt{\log g} \leq V \leq \log g$, then 
\begin{equation}
\label{n1}
N(V) \ll q^g \exp \Big(- \frac{V^2}{\log g} \Big(1-\frac{6}{ \log \log g} \Big)^2 \Big);
\end{equation}
if $\log g < V \leq \frac{1}{12} \log g \log \log g$, then
\begin{equation}
\label{n2}
N(V) \ll q^g \exp \Big(-\frac{V^2}{ \log g} \Big(1- \frac{6V}{ \log g \log \log g} \Big)^2  \Big);
\end{equation}
if $V> \frac{1}{12} \log g \log \log g$, then
\begin{equation}
N(V) \ll q^g \exp \Big(- \frac{V \log V}{60} \Big).
\label{n3}
\end{equation}
Now we use the bounds \eqref{n1}, \eqref{n2}, \eqref{n3} in the form $N(V) \ll q^g g^{o(1)} \exp(-V^2/\log g)$ if $V \leq 4 k \log g$ and $N(V) \ll q^g g^{o(1)} \exp(-4kV)$ if $V>4k \log g$ in equation \eqref{mom} to prove Proposition \ref{lemma_sound}.
Indeed, we have
\begin{eqnarray*}
 \sum_{\chi \in \mathcal{C}(g)} |{\textstyle L(\frac{1}{2},\chi)} |^{2k} &\ll_k& q^g g^{o(1)} \int_{\sqrt{\log{g}}}^{4k \log{g}} \exp (2kV - V^2/\log{g}) \, dV + q^g g^{o(1)} \int_{4 k \log{g}}^{\infty} \exp (-2kV) \, dV \\
 &\ll_k& q^g g^{o(1)}  \exp{(k^2 \log{g})},
\end{eqnarray*}
and the desired upper bound follows. As mentioned in \cite{Sound-again}, it is interesting to remark that the proof suggests that the dominant contribution for the $2k^{\text{th}}$ moment comes from the characters $\chi$ such that $|{\textstyle L(\frac{1}{2}, \chi)} |$ has size $g^k$, and the measure of this set is about $q^g g^{-k^2}$.
\end{proof}

 \section{Explicit upper bound for mollified moments} \label{explicit-UB}

Here we will obtain an explicit upper bound for expression \eqref{before-explicit}, which means that we want to find an upper bound for $C_J$ from \eqref{tobound2}
by choosing $\theta_\JJ , a,b$ and $d$ subject to the constraints in Lemma \ref{estimates} and subject to \eqref{important_condition}.

   Let
$$f(u)=\A e^u-\B  ue^u+\frac{\kk^2u \theta_\JJ }{2},$$ with 
\begin{equation}
\A =\kk e+\frac{\alpha}{2d} \log \theta_\JJ  + \frac{\log F}{2d}, \B  = \frac{\alpha}{2d} ,
\label{ab}
\end{equation} 
where recall that
$$\alpha= 2 b-2+\frac{c}{3} , F= \frac{\kk^2 e^{2+c/3} 5^{c/3}}{4d^{2-c/3}c^{c/3}}, c=2-4/a,$$ and $a$ and $d$ are as in Lemma \ref{estimates}. 
We will pick $\theta_\JJ $ subject to the condition \eqref{important_condition} and such that $\A >0$. 

We have $$f'(u) = e^u(\A -\B  -\B  u)+ \frac{\kk^2 \theta_\JJ }{2},$$
and notice that for $u \leq (\A -\B  )/\B  $ we have $f'(u)>0$  so $f$ is increasing on $[0,(\A -\B  )/\B  ]$ i.e: $f$ is increasing on $[0, \frac{2d\kk e}{\alpha}+\log \theta_\JJ +\frac{\log F}{\alpha}-1]$.
Also note that
$$f'(\A /\B  )= -\B  e^{\A /\B  }+ \frac{\kk^2 \theta_\JJ }{2}<0,$$ so the maximum of $f$ occurs at some $m \in (\A /\B  -1,\A /\B  )$. 
With the above notation, we write 
$$C_J \sim \int_0^{\JJ -1} (u+1) \exp \Big( \frac{1}{\theta_\JJ } (\A e^u-\B   u e^u + \frac{\kk^2 u \theta_\JJ }{2} ) \Big) \, du.$$
For $u \geq 4\A /\B  $ we have $\A e^u+ \kk^2 u \theta_\JJ /2<\B  ue^u/2$, so
\begin{align}
\int_{4A/B}^{\JJ -1} (u+1) \exp \Big( \frac{1}{\theta_\JJ } (\A e^u-\B   u e^u + \frac{\kk^2 u \theta_\JJ }{2} ) \Big) \, du \leq \int_{4\A /\B  }^{\infty} e^{-u} \, du = e^{-4\A /\B  }.
\label{in21}
\end{align}
Now
\begin{align}
\int_0^{4\A /\B  } & (u+1) \exp \Big( \frac{1}{\theta_\JJ } (\A e^u-\B   u e^u + \frac{\kk^2 u \theta_\JJ }{2} ) \Big) \, du \nonumber\\ &\leq  \frac{4\A }{\B  } \Big(\frac{4\A }{\B  }+1 \Big) \exp \Big( \frac{1}{\theta_\JJ } (\A e^m-\B   m e^m + \frac{\kk^2 m \theta_\JJ }{2} ) \Big)\nonumber  \\
&  \leq \frac{4\A }{\B  }\Big(\frac{4\A }{\B  }+1 \Big) \exp \Big(\frac{\kk^2\A }{2\B  } \Big) \exp \Big(\frac{\B  e^{\frac{\A }{\B  }-1}}{\theta_\JJ } \Big), \label{in22}
\end{align}
where in the third line we used the fact that $m \in (\A /\B  -1,\A /\B  )$. Combining \eqref{in21} and \eqref{in22} we get that
$$ C_J \leq e^{-4\A /\B  }+\frac{4\A }{\B  } \Big( \frac{4\A }{\B  }+1 \Big) \exp \Big(\frac{\kk^2\A }{2\B  } \Big) \exp \Big(\frac{\B  e^{\frac{\A }{\B  }-1}}{\theta_\JJ } \Big).$$
Now using this inequality back in \eqref{before-explicit}, we get that

\begin{align}
\sum_{\substack{ \chi \in  \mathcal{C}(g) }} |{\textstyle L(\frac{1}{2},\chi)}|^{\kk} & |{\textstyle M(\chi;\frac{1}{\kappa})}|^{\kk \kappa} \leq_\varepsilon 
q^{g+2}\mathcal{D}_k^{1/2} \mathcal{S}_k^{1/2} \exp{\Big( \frac{3}{2} k^2 + (1+\eta) k \Big)}\\
&\times \left( \exp(k/\theta_J) +  \sqrt[4]{\frac{24}{c}}  \Big[e^{-4\A /\B  }+\frac{4\A }{\B  } \Big( \frac{4\A }{B}+1 \Big) \exp \Big(\frac{\kk^2\A }{2\B  } \Big) \exp \Big(\frac{\B  e^{\frac{\A }{\B}-1}}{\theta_\JJ } \Big) \Big]  \right),\label{explicit_ub}
\end{align}
where recall that $\A $ and $\B  $ are given in \eqref{ab}, and where $\mathcal{S}_k$ is defined in Lemma \ref{estimates-squares}.


From the explicit upper bound above, we remark that because of the term $\exp(\B  e^{\A /\B  -1}/\theta_\JJ )$, the upper bound we obtain is of the form $e^{e^{O(\kk)}}$.

   Now we take $\kappa=1, \kk=2$. Condition \eqref{important_condition} becomes
   $$ 10 \sum_{r=0}^\JJ \theta_r  \ell_r + \frac{4}{d} \leq \frac{1}{2}.$$ Note that any $\theta_\JJ $ with
   $$\theta_\JJ ^{1-b} \frac{e^{1-b}}{e^{1-b}-1} \leq \frac{d-8}{40d},$$ satisfies the condition above. 
   We will pick $\theta_\JJ $ such that
   \begin{equation}
     \theta_\JJ = \Big(\frac{d-8}{40d} \Big(1-\frac{1}{e} \Big)\Big)^{\frac{1}{1-b}}.
     \label{opt2}
   \end{equation}

 
   Now, in equation \eqref{explicit_ub}, in order to obtain an optimal constant, we set 
   $$ \frac{1}{\theta_\JJ }= e^{\A /\B  },$$ and the term $\log F/(2d)$ in the expression for $\A $ is small compared to the rest, so in order to optimize the constant, we set
   \begin{equation}
    \log \frac{1}{\theta_\JJ } = \frac{2de}{2b-2+\frac{c}{3}}. \label{opt1}
    \end{equation} 
Now from Lemma \ref{estimates}, we need 
$$4ad \theta_\JJ ^{1-b} \leq 1,$$ so combining this with \eqref{opt2} it follows that
$$c \leq 2-\frac{2(d-8)(e-1)}{5e}.$$     
Now, to minimize \eqref{opt1}, we need $c$ to be maximal, so we will pick
\begin{equation}
\label{c}
c= 2-\frac{2(d-8)(e-1)}{5e}.
\end{equation}
From \eqref{opt2}, \eqref{opt1} and \eqref{c}, it follows that
\begin{equation}
 b= 1- \frac{cx}{6(de+x)},
 \label{b}
 \end{equation}
  where $x= \log (40 de/((d-8)(e-1)))$. With choices \eqref{b} and \eqref{c} for $b$ and $c$, we want to minimize \eqref{opt1}, and this translates into minimizing the function of $d$ given by
  $$\frac{de+x}{1-\frac{(d-8)(e-1)}{5e}},$$ for $d>8$. 
  
  The minimum of the function above is achieved for 
  \begin{equation}
  d \approx 8.15, \label{d}
  \end{equation} and in that case
  $$ \log \frac{1}{\theta_\JJ } = \frac{2de}{2b-2+\frac{c}{3}} \approx 92.65. $$
  With the choice above for $d$, we get that
  \begin{equation}
  b \approx 0.91 , \, c \approx 1.96.
  \label{b-c}
  \end{equation}
  \kommentar{ From \eqref{opt1} and \eqref{opt2}, we get that 
   \begin{equation} 
   c= \frac{6(1-b)(de+x)}{x},\label{c}
   \end{equation} where $x= \log(40de/((d-8)(e-1)).$ Since we need $c<2$, from \eqref{c} it follows that
   $$b>1-\frac{x}{3(de+x)},$$ and we pick $b$ such that 
   \begin{equation}
   1-b= \frac{x(1-\varepsilon)}{3(de+x)}, \label{b}
   \end{equation} for a small $\varepsilon>0$.
   Then we want to minimize \eqref{opt1} subject to $d>8$. We have
   $$\frac{2de}{2b-2+\frac{c}{3}}= \frac{3(de+x)}{1-\varepsilon},$$ and when $d>8$, this attains its minimum when 
   \begin{equation}
   d= \frac{8+\sqrt{64+\frac{32}{e}}}{2}.
   \label{d}
   \end{equation}}

   Choosing $b,c,d$ as in \eqref{d}, \eqref{b-c} we obtain the upper bound
   \begin{equation}    \sum_{\chi \in \mathcal{C}(g)} |{\textstyle L(\frac{1}{2},\chi)}|^{ 2}  |M(\chi;1)|^2 \leq_{\varepsilon} e^{e^{182}} q^{g+2}.
   \label{big_bound}
   \end{equation}
   
\kommentar{ {\color{green} Another Matilde: I've corrected to $182$}
 
\mcom{I get from the term 
$\exp(\frac{Be^{\frac{A}{B}-1}}{\theta_J})$ the exponent $182.70861$. Are you computing more things in the formula? OK, I realize now that they are tiny... Anyways, I think we can write 183, not that it matters. OMG, my estimate for $\mathcal{S}_2$ is so ``useful''!}
\acom{I think I was getting something like $184.\text{something}$, but it might depend on what precisely we plug in (the approximations, for example $8.14$ for $d$, or the actual value we get from minimizing the function in Mathematica. I'll double check. You're right, all the other constants are very tiny compared to this exponential.}
 \mcom{ I get: $d=8.1482520387346009401$,  $c=1.9625147353710444299215967941283513825$, $x= 8.1541995191709114302098148487659778687$, $b=0.91198615246828841411046868441463141155$, $\alpha=0.47814388339359163819480296687204661727$, $\log 1/\theta_J=92.646779431304092539364920560717962513$, $F=1.5568689290862614215548641202918624690$, $A=2.7454457346256454125393304916385785863$, $B= 0.029340273295464111999006212097208774995$,  
  and finally $\log ( \frac{B e^{A/B-1}}{\theta_J})=181.69058788658989812095314920197598369$
  
  If we want to keep the $181.69058788658989812095314920197598369$, the coefficient for $e^{e^{181.69...}}$ is $e^{214.37104272683803784834557357954346476}$, which is nothing. 
  } 
 
\acom{For $d$ I used $d=8.14825$ and then $\log 1/\theta_J = 92.6468$, $c=1.96252, b=0.911986$, $\alpha=0.478145, B=0.0293404$... Anyway, I agree with the $181.69$, I'm attaching the Mathematica notebook.}
   }
   \kommentar{I'm recording the numerical computations here:
   
   ? d=8.14                                                       
? c=2-2*(d-8)*(exp(1)-1)/5/exp(1)
? x=log(40*d*exp(1)/(d-8)/(exp(1)-1))
? b=1-c*x/6/(d*exp(1)+x)                                       
? 2*d*exp(1)/(2*b-2+c/3)
? b
? c
? 2*d*exp(1)/(2*b-2+c/3)                                       
? Lt=
? a
? c
? b
? alpha=2*b-2+c/3
? F=4*exp(2+c/3)*5^(c/3)/4/d^(2-c/3)/c^(c/3)
? A=2*exp(1)-alpha/2/d*Lt+log(F)/2/d
? B=alpha/2/d
? A/B
? exp(A/B)*B*exp(Lt)
? log(
? 
exp(2*A/B)*4*A/B*(4*A/B+1)*(24/c)^(1/4)*exp(3/2*4+(1+et)*2)*sqrt(885.727)
}

\section{The mollified first moment} \label{sec:first-moment}

Here, we will prove Theorem \ref{first-moment}. We consider the mollified first moment with $\kappa=1$. We have
\begin{equation}
\label{mchi}
M(\chi):=M(\chi,1) = \sum_{\substack{h_0 \cdot \ldots \cdot h_\JJ =h\\ P|h_j \Rightarrow P \in I_j\\ \Omega(h_j) \leq  \ell_j}} \frac{a(h; \JJ ) \chi(h) \lambda(h) \nu(h_0) \cdot \ldots \cdot \nu(h_\JJ )}{\sqrt{|h|}},
\end{equation}
and then

\begin{align}
\sum_{\chi \in \mathcal{C}(g)} {\textstyle L(\frac{1}{2}, \chi)} M(\chi) = \sum_{\substack{h_0 \cdot \ldots \cdot h_\JJ =h\\ P|h_j \Rightarrow P \in I_j\\ \Omega(h_j) \leq  \ell_j}} \frac{a(h; \JJ )  \lambda(h) \nu(h_0) \cdot \ldots \cdot \nu(h_\JJ )}{ \sqrt{|h|}} \sum_{\chi \in \mathcal{C}(g)} \chi(h) {\textstyle L(\frac{1}{2}, \chi)} .\label{mom1}
\end{align} 
We will evaluate the twisted first moment in the lemma below. 
\begin{prop}\label{twist}
Let $q \equiv 2 \pmod 3$, and let $h$ be a polynomial in $\mathbb{F}_q[T]$ with $\deg(h) < g ( \frac{1}{10}-\varepsilon)$. Let $h=CS^2E^3$, where $C$ and $S$ are square-free and coprime. Then we have
\begin{align*}
 \sum_{\chi \in \mathcal{C}(g)} \chi(h) {\textstyle L(\frac{1}{2}, \chi)}  =& \frac{ q^{g+2} \zeta_q(3/2)}{\zeta_q(3)|C| \sqrt{|S|}} \mathcal{A}_{\mathrm{nK}} \Big( \frac{1}{q^2}, \frac{1}{q^{3/2}} \Big)  \prod_{\substack{R \in \mathbb{F}_q[T] \\ \deg(R) \text{ even} \\ R|h}} M_R \Big( \frac{1}{q^2},\frac{1}{q^{3/2}} \Big) \\
 &+ O \Big(q^{\frac{7g}{8}+ \frac{\deg(h)}{4}+\varepsilon g} \Big),
 \end{align*}
 where $\mathcal{A}_{\mathrm{nK}} \Big( \frac{1}{q^2}, \frac{1}{q^{3/2}} \Big)$ and $M_R \Big( \frac{1}{q^2},\frac{1}{q^{3/2}} \Big) $ are given in equations \eqref{a-euler} and \eqref{mr}.
\end{prop}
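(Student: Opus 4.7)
The plan is to follow the strategy of \cite{DFL} for the (untwisted) first moment, adapting each step to accommodate the twist by $\chi(h)$ and carefully tracking how the dependence on $\deg(h)$ propagates into the error term. First, apply the approximate functional equation of Proposition \ref{prop-AFE} with $X = g/2$ (or a suitable choice) to write
\[
{\textstyle L(\tfrac12,\chi)} = \Sigma_{\text{pr}}(\chi) + \omega(\chi)\Sigma_{\text{du}}(\chi) + \Sigma_{\text{bd},1}(\chi) + \omega(\chi)\Sigma_{\text{bd},2}(\chi),
\]
where the two middle ``boundary'' terms are short and can be controlled directly. Plug this into $\sum_{\chi\in\mathcal{C}(g)}\chi(h)L(\tfrac12,\chi)$ and interchange summations.

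For the principal term, the inner character sum to evaluate is
\[
\sum_{\chi\in\mathcal{C}(g)} \chi(fh) \;=\; \sum_{\substack{F\in\mathcal{H}_{q^2,g/2+1}\\ P\mid F \Rightarrow P\notin\F_q[T]}} \chi_F(fh),
\]
using the description \eqref{square-free-sum} and cubic reciprocity over $\F_{q^2}[T]$. Writing $h=CS^2E^3$ and using inclusion–exclusion (via a Möbius sum over square-free divisors of $F$, as in \cite{DFL}) together with the Möbius–sieve identifying the restriction to primes of even degree, one extracts a main term coming from those $f$ such that $fh$ is a perfect cube. This cubic condition forces $f = C^2 S \cdot m^3 \cdot (\text{local adjustments at primes dividing }CS)$, and summing $q^{-\deg(f)/2}$ over such $f$ yields a Dirichlet series that factors as an Euler product evaluated at $(q^{-2}, q^{-3/2})$, producing the factor $\zeta_q(3/2)/(\zeta_q(3)|C|\sqrt{|S|})$ together with the arithmetic factor $\mathcal{A}_{\mathrm{nK}}(1/q^2, 1/q^{3/2})$ and the local correction factors $M_R(1/q^2, 1/q^{3/2})$ at primes $R$ dividing $h$. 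The non-cube contributions are bounded by Pólya–Vinogradov–type arguments over $\F_{q^2}[T]$ and yield a power-saving error.

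For the dual term, use \eqref{Gauss-root} to replace $\omega(\chi)$ by $q^{-g/2-1} G_{q^2}(1,F)$, reducing the problem to an average of generalized cubic Gauss sums $G_{q^2}(fh,F)$ over square-free $F\in\F_{q^2}[T]$ of degree $g/2+1$. Apply Proposition \ref{big-F-tilde-corrected} to this sum: the contour-integral piece gives a bound of the shape $q^{\sigma(g/2+1)}|fh|^{(3/4-\sigma/2)}$, and choosing $\sigma$ optimally (together with the ranges of $f$ from the AFE) balances powers of $q$ and of $|h|$. This is where the exponent $7g/8 + \deg(h)/4$ in the error term appears. The two boundary sums $\Sigma_{\text{bd},1}, \Sigma_{\text{bd},2}$ are treated analogously but are shorter, so they contribute error of the same or smaller order.

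The main obstacle is the dual-sum estimate: one has to use the sharp bounds on $\tilde{\Psi}_q(f,u)$ from Proposition \ref{big-F-tilde-corrected} \emph{uniformly} in the shift $fh$, keep the factor $|h|^{1/4}$ under control, and simultaneously verify that there is no ``diagonal'' contribution from the dual side that would interfere with the main term. This is precisely what forces the restriction $\deg(h) < g(1/10 - \varepsilon)$: it is the threshold at which the Gauss-sum error $q^{7g/8 + \deg(h)/4 + \varepsilon g}$ remains smaller than the main term $q^{g+2}/\sqrt{|h|}$. The remaining work is then combinatorial: collecting the local Euler factors at primes dividing $C$, $S$, and $E$ to assemble the product $\prod_{R\mid h,\, \deg R\text{ even}} M_R$, which reflects the fact that only primes of even degree in $\F_q[T]$ split in $\F_{q^2}[T]$ and hence contribute non-trivial local modifications.
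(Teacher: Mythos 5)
Your route is essentially the paper's: approximate functional equation, split into principal and dual sums, extract the cube condition $fh=\cube$ (which, since $h=CS^2E^3$, forces $f=C^2SK^3$), evaluate the resulting double generating series by Perron's formula to produce $\zeta_q(3/2)\mathcal{A}_{\mathrm{nK}}(1/q^2,1/q^{3/2})\prod_{R\mid h}M_R/(\zeta_q(3)|C|\sqrt{|S|})$, bound the non-cube principal part by upper/lower bounds on the auxiliary $L$-functions, convert the dual sum via $\omega(\chi_F)=q^{-g/2-1}G_{q^2}(1,F)$ into an average of shifted cubic Gauss sums, and apply Proposition \ref{big-F-tilde-corrected} with $\sigma=7/6$, finally balancing against the non-cube bound $q^{(X+g)/2+\varepsilon g}$ to get $X=\tfrac{3g}{4}+\tfrac{\deg(h)}{2}$ and the error $q^{7g/8+\deg(h)/4+\varepsilon g}$; your identification of $\deg(h)<g(\tfrac1{10}-\varepsilon)$ as the threshold where this error stays below $q^{g}/(|C|\sqrt{|S|})$ is exactly right.

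There is, however, one substantive misconception in your sketch: you propose to ``verify that there is no diagonal contribution from the dual side.'' There \emph{is} one. First, a small slip: since $\chi(h)\overline{\chi}_F(f)=\overline{\chi}_F(fh^2)$, the dual sum involves $G_{q^2}(fh^2,F)$, not $G_{q^2}(fh,F)$. More importantly, Proposition \ref{big-F-tilde-corrected} is not only an error bound: its $\delta_{f_2h_1=1}$ term is a genuine main term for the Gauss-sum average, and summing it over $f$ produces a secondary term of size about $q^{g-X/6}$ (with arithmetic factors in $h$). Correspondingly, the principal cube term has a second pole at $u=1/q$ contributing a term of the same size $q^{g+2-X/6}$; in the paper these two are computed explicitly and cancel exactly. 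With the optimized $X=\tfrac{3g}{4}+\tfrac{\deg(h)}{2}$ each is $\ll q^{7g/8+\varepsilon g}$ and could instead be absorbed into the stated error, but with your tentative $X=g/2$ this dual main term has size $q^{11g/12}$, exceeding the claimed error (and the dual contour-integral error would then be $q^{13g/12}$, exceeding the main term), so the choice of $X$ is not a deferrable detail. To complete the argument you must either evaluate these two secondary terms (and observe the cancellation, which also uses the normalization $X\equiv 2\deg(h)\pmod 3$ to control the $\rho$-factors and the boundary pieces of the approximate functional equation) or fix $X$ at the optimized value from the start and bound them; they cannot simply be asserted to be absent.
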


\begin{proof}[Proof of Proposition \ref{twist}]

The proof is similar to the proof of Theorem $1.1$  in \cite{DFL}. Using the explicit description of the characters $\chi \in \mathcal{C}(g)$ given by  
 \eqref{square-free-sum}, and Proposition \ref{prop-AFE}, we write
\begin{eqnarray*}
\sum_{\chi \in \mathcal{C}(g)} \chi(h) {\textstyle L(\frac{1}{2}, \chi)}  &=&  S_{1, \mathrm{principal}}  + S_{1, \mathrm{dual}}, 
\end{eqnarray*}
 where
 \begin{equation}
 S_{1, \mathrm{principal}} = \sum_{f \in \mathcal{M}_{q, \leq X}}\frac{1}{q^{\deg(f)/2}} \sum_{\substack{F \in \mathcal{H}_{q^2, g/2+1} \\ P \mid F \Rightarrow P \not\in \F_q[T]}}  \chi_F(fh)
 + \frac{1}{1-\sqrt{q}} \sum_{f \in \mathcal{M}_{q, X+1}} \frac{1}{q^{\deg(f)/2}} \sum_{\substack{F \in \mathcal{H}_{q^2, g/2+1} \\ P \mid F \Rightarrow P \not\in \F_q[T]}}  \chi_F(fh)
 \label{princ2}
 \end{equation} and
 \begin{align}
 S_{1, \mathrm{dual}} =& \sum_{f \in \mathcal{M}_{q, \leq g-X-1}}\frac{1}{q^{\deg(f)/2}}
 \sum_{\substack{F \in \mathcal{H}_{q^2, g/2+1} \\ P \mid F \Rightarrow P \not\in \F_q[T]}}  \omega(\chi_F) \overline{\chi_F}(fh^2) \\
& +\frac{1}{1-\sqrt{q}} \sum_{f \in \mathcal{M}_{q, g-X}} \frac{1}{q^{\deg(f)/2}} \sum_{\substack{F \in \mathcal{H}_{q^2, g/2+1} \\ P \mid F \Rightarrow P \not\in \F_q[T]}} \omega(\chi_F)  \overline{\chi_F}(fh^2).
 \label{dual*}
 \end{align}
We will choose $X \equiv 2 \deg(h) \pmod 3$. For the principal term, we will compute the contribution 
from polynomials $f$ such that $fh$ is a cube and bound the contribution from $fh$ non-cube. We write
 $$S_{1,\mathrm{principal}} = S_{1,\cube}+S_{1, \neq \cube},$$ where 
$S_{1,\cube}$ corresponds to the sum with $fh$ a cube in equation \eqref{princ2} and $S_{1,\neq \cube}$ corresponds to the sum with $fh$ not a cube, namely,
\begin{equation}
S_{1,\cube} = \sum_{\substack{f \in \mathcal{M}_{q, \leq X}\\fh = \tinycube}} \frac{1}{q^{\deg(f)/2}} \sum_{\substack{F \in \mathcal{H}_{q^2, g/2+1} \\ (F,fh) =1 \\ P \mid F \Rightarrow P \not\in \F_q[T]}} 1,
\label{maint}
\end{equation} 
and 
\begin{equation}\label{non-cubes}
S_{1, \neq \cube} =  \sum_{\substack{f \in \mathcal{M}_{q, \leq X} \\ fh  \neq \tinycube}} \frac{1}{q^{\deg(f)/2}} \sum_{\substack{F \in \mathcal{H}_{q^2, \frac{g}{2}+1} \\ P \mid F \Rightarrow P \not\in \F_q[T] }} {\chi_F}(fh)+ \frac{1}{1-\sqrt{q}} \sum_{f \in \mathcal{M}_{q,X+1}} \frac{1}{q^{\deg(f)/2}}\sum_{\substack{F \in \mathcal{H}_{q^2, \frac{g}{2}+1} \\ P \mid F \Rightarrow P \not\in \F_q[T]}}
 {\chi_F}(fh).
\end{equation}
 
Since $X \equiv 2 \deg(h) \pmod 3$, note that the second term in \eqref{princ2} does not contribute to the expression \eqref{maint}
for $S_{1,\cube}$. 

\kommentar{\acom{Bounding $S_{1, \neq \cube}$ is exactly analogous to the proof in \cite{DFL}, and we get that
 \begin{equation}
 S_{1, \neq \cube} \ll q^{\frac{X+g}{2}+\varepsilon g}.\label{non-cube}
 \end{equation}
 }\mcom{This is done in section 8.2, do we need to keep this comment?}}
\subsection{The main term}

Now we focus on $S_{1,\cube}$. Since $h= CS^2E^3$, where $C, S$ are square-free and $(C,S)=1$ and $fh=\cube$, it follows that we can write $f= C^2 S K^3$. Then
 \begin{equation}\label{eq:mt}
S_{1,\cube} = \sum_{K \in \mathcal{M}_{q, \leq \frac{X-\deg(C^2D)}{3}}} \frac{1}{|C|_q \sqrt{|S|_q} |K|_q^{3/2}} \sum_{\substack{F \in \mathcal{H}_{q^2, g/2+1} \\ (F,Kh) =1 \\ P \mid F \Rightarrow P \not\in \F_q[T]}} 1.
\end{equation}
We first look at the generating series of the sum over $F$.  We use the fact that 
\begin{equation}
\sum_{\substack{D \in \F_q[T]\\D \mid F}} \mu(D) = \begin{cases} 1 & \mbox{if $F$ has no prime divisor in $\F_q[T]$,} \\ 0 & \mbox{otherwise}, \end{cases}
\label{sieve_F}
\end{equation}
where $\mu$ is the M\"obius function over $\F_q[T]$. 
The generating series corresponding to the inner sum in \eqref{eq:mt} is
\begin{equation}
\sum_{\substack{F \in \mathcal{H}_{q^2} \\ (F,Kh) =1 \\ P \mid F \Rightarrow P \not\in \F_q[T]}} x^{\deg(F)} 
= \sum_{\substack{F \in \mathcal{H}_{q^2} \\ (F,Kh)=1}} x^{\deg(F)} \sum_{\substack{D \in \mathbb{F}_q[T] \\ D\mid F}} \mu(D) = \sum_{\substack{D \in \mathbb{F}_q[T] \\ (D,Kh)=1}} \mu(D) x^{\deg(D)} \sum_{\substack{F \in \mathcal{H}_{q^2} \\ (F,DKh)=1}} x^{\deg(F)}.
\label{sum_F}
\end{equation}
We evaluate the sum over $F$ and we have that
$$ \sum_{\substack{F \in \mathcal{H}_{q^2} \\ (F,KDh)=1}} x^{\deg(F)} = \prod_{\substack{P \in \mathbb{F}_{q^2}[T] \\ P \nmid DKh}} (1+x^{\deg(P)}) = \frac{ \mathcal{Z}_{q^2}(x)}{\mathcal{Z}_{q^2}(x^2)\displaystyle  \prod_{\substack{P \in \mathbb{F}_{q^2}[T] \\ P \mid DKh}} (1+x^{\deg(P)})},$$
and combining the above with equation \eqref{sum_F}, it follows that
$$ \sum_{\substack{F \in \mathcal{H}_{q^2} \\ (F,Kh) =1 \\ P \mid F \Rightarrow P \not\in \F_q[T]}} x^{\deg(F)}  = \frac{ \mathcal{Z}_{q^2}(x)}{\mathcal{Z}_{q^2}(x^2) \displaystyle \prod_{\substack{P \in \mathbb{F}_{q^2}[T] \\ P\mid Kh}} (1+x^{\deg(P)})} \sum_{\substack{D \in \mathbb{F}_q[T] \\ (D,Kh)=1}} \frac{ \mu(D)x^{\deg(D)}}{ \displaystyle \prod_{\substack{P \in \mathbb{F}_{q^2}[T] \\ P\mid D}} (1+x^{\deg(P)})}.$$

Now we write down an Euler product for the sum over $D$ and we have that
\begin{align}
\sum_{\substack{D \in \mathbb{F}_q[T] \\ (D,Kh)=1}} \frac{ \mu(D)x^{\deg(D)}}{ \displaystyle \prod_{\substack{P \in \mathbb{F}_{q^2}[T] \\ P\mid D}} (1+x^{\deg(P)})} = \prod_{\substack{R \in \mathbb{F}_q[T] \\ (R,Kh)=1 \\ \deg(R)  \,\mathrm{ odd}}} \left ( 1- \frac{x^{\deg(R)}}{1+x^{\deg(R)}} \right )\prod_{\substack{R \in \mathbb{F}_q[T] \\ (R,Kh)=1 \\ \deg(R)  \,\mathrm{ even}}} \left ( 1- \frac{x^{\deg(R)}}{(1+x^{\frac{\deg(R)}{2}})^2} \right ), \label{sum_d}
\end{align} where the product over $R$ is over monic, irreducible polynomials. Let $A_R(x)$ denote the first Euler factor above and $B_R(x)$ the second. 
\kommentar{Then we rewrite the sum over $D$ as
$$ \eqref{sum_d} =  \frac{\displaystyle \prod_{\substack{R \in \mathbb{F}_q[T] \\ \deg(R)  \,\mathrm{ odd}}} A_R(x)  \prod_{\substack{R \in \mathbb{F}_q[T] \\ \deg(R)  \,\mathrm{ even}}} B_R(x)}{\displaystyle \prod_{\substack{R \in \mathbb{F}_q[T] \\ R\mid Kh \\ \deg(R)  \,\mathrm{ odd}}} A_R(x)  \prod_{\substack{R \in \mathbb{F}_q[T] \\ R\mid Kh \\ \deg(R)  \,\mathrm{ even}}} B_R(x) },$$}Using \eqref{sum_d} and putting everything together, it follows that
\begin{equation}
\sum_{\substack{F \in \mathcal{H}_{q^2} \\ (F,Kh) =1 \\ P \mid F \Rightarrow P \not\in \F_q[T]}} x^{\deg(F)}  = \frac{ \mathcal{Z}_{q^2}(x) \displaystyle \prod_{\substack{R \in \mathbb{F}_q[T] \\ \deg(R)  \,\mathrm{ odd}}} A_R(x)  \prod_{\substack{R \in \mathbb{F}_q[T] \\ \deg(R)  \,\mathrm{ even}}} B_R(x) }{\mathcal{Z}_{q^2}(x^2) \displaystyle \prod_{\substack{P \in \mathbb{F}_{q^2}[T] \\ P\mid Kh}} (1+x^{\deg(P)}) \displaystyle \prod_{\substack{R \in \mathbb{F}_q[T] \\ R\mid Kh \\ \deg(R)  \,\mathrm{ odd}}} A_R(x)  \prod_{\substack{R \in \mathbb{F}_q[T] \\ R\mid Kh \\ \deg(R)  \,\mathrm{ even}}} B_R(x)  }  .
\label{sum_F_2}
\end{equation}

We now introduce the sum over $K$ and we get
\begin{align*}
& \sum_{K \in \mathcal{M}_q}  \frac{ u^{\deg(K)} }{ \displaystyle \prod_{\substack{P \in \mathbb{F}_{q^2}[T] \\ P\mid K, P \nmid h}} (1+x^{\deg(P)}) \displaystyle \prod_{\substack{R \in \mathbb{F}_q[T] \\ R\mid K, R \nmid h \\ \deg(R)  \,\mathrm{ odd}}} A_R(x)  \prod_{\substack{R \in \mathbb{F}_q[T] \\ R\mid K, R \nmid h \\ \deg(R)  \,\mathrm{ even}}} B_R(x) } \\
=&\prod_{\substack{R \in \mathbb{F}_q[T] \\ \deg(R)  \,\mathrm{ odd} \\ R \nmid h}} \left[ 1+ \frac{u^{\deg(R)}}{(1+x^{\deg(R)}) A_R(x) (1-u^{\deg(R)})}\right] \prod_{\substack{R \in \mathbb{F}_q[T] \\ \deg(R)  \,\mathrm{ even} \\ R \nmid h}} \left[1+ \frac{u^{\deg(R)}}{(1+x^{\frac{\deg(R)}{2}})^2 B_R(x) (1-u^{\deg(R)})} \right] \\ 
& \times \prod_{\substack{R \in \mathbb{F}_q[T] \\ R|h}} \frac{1}{1-u^{\deg(R)}},
\end{align*}
where $R$ denotes a monic irreducible polynomial in $\mathbb{F}_q[T]$. Combining the equation above and \eqref{sum_F_2}, we get  the generating series 
\begin{align} 
\sum_{K \in \mathcal{M}_q}  u^{\deg(K)}\sum_{\substack{F \in \mathcal{H}_{q^2} \\ (F,Kh) =1 \\ P \mid F \Rightarrow P \not\in \F_q[T]}} x^{\deg(F)}  &=\frac{ \mathcal{Z}_{q^2}(x)}{\mathcal{Z}_{q^2}(x^2)} \prod_{\substack{R \in \mathbb{F}_q[T] \\ \deg(R)  \,\mathrm{ odd} \\ R \nmid h}} \frac{1}{(1+x^{\deg(R)})(1-u^{\deg(R)})} 
\nonumber \\ \nonumber
& \hspace{-2in}  \times \prod_{\substack{R \in \mathbb{F}_q[T] \\ \deg(R)  \,\mathrm{ even} \\ R \nmid h}} \frac{1}{(1+x^{\frac{\deg(R)}{2}})^2} \left ( 1+2x^{\frac{\deg(R)}{2}} + \frac{u^{\deg(R)}}{1-u^{\deg(R)}}\right ) \prod_{\substack{P \in \mathbb{F}_{q^2}[T] \\ P|h}} \frac{1}{1+x^{\deg(P)}} \prod_{\substack{R \in \mathbb{F}_q[T] \\ R|h}} \frac{1}{1-u^{\deg(R)}} \nonumber  \\
=& \mathcal{Z}_q (u) \frac{ \mathcal{Z}_{q^2}(x)}{\mathcal{Z}_{q^2}(x^2)} \mathcal{A}_{\mathrm{nK}}(x,u)  \prod_{\substack{R \in \mathbb{F}_q[T] \\ \deg(R) \text{ even} \\ R|h}} M_R(x,u), \nonumber
\end{align} 
where
\begin{align}
\mathcal{A}_{\mathrm{nK}}(x,u) &=  \prod_{\substack{R \in \mathbb{F}_q[T] \\ \deg(R)  \,\mathrm{ odd}}} \frac{1}{1+x^{\deg(R)}} \prod_{\substack{R \in \mathbb{F}_q[T] \\ \deg(R)  \,\mathrm{ even}}}\frac{1}{(1+x^{\frac{\deg(R)}{2}})^2} \left ( 1+2x^{\frac{\deg(R)}{2}} (1-u^{\deg(R)}) \right ), \label{a-euler}\\
M_R(x,u) &= \frac{1}{1+2x^{\deg(R)/2}(1-u^{\deg(R)})}.\label{mr}
\end{align}
\kommentar{\begin{equation*} 
C_R(x) = \frac{1}{(1+x^{\deg(R)})(1-u^{\deg(R)})},
\end{equation*}
\begin{equation*} 
D_R(x) = \frac{1}{(1+x^{\frac{\deg(R)}{2}})^2} \left ( 1+2x^{\frac{\deg(R)}{2}} + \frac{u^{\deg(R)}}{1-u^{\deg(R)}}\right ).
\end{equation*}
Note that we can rewrite
\begin{align*}
\eqref{double_sum} = \mathcal{Z}_q (u) \frac{ \mathcal{Z}_{q^2}(x)}{\mathcal{Z}_{q^2}(x^2)} \mathcal{A}_{\mathrm{nK}}(x,u)  \prod_{\substack{R \in \mathbb{F}_q[T] \\ \deg(R) \text{ even} \\ R|h}} M_R(x,u), 
\end{align*}
where 
\begin{equation}
M_R(x,u) = \frac{1}{1+2x^{\deg(R)/2}(1-u^{\deg(R)})}.\label{mr}
\end{equation}}

We remark that if $h=1$, the generating series above is the same as in \cite[Section 4.3]{DFL}, and we compute the asymptotic for $S_{1, \cube}$ below in the exact same way, keeping the dependence on $h$. 
Using Perron's formula  (Lemma \ref{perron})  twice in \eqref{eq:mt} and the expression of the generating series above, we get that
$$ S_{1,\cube} = \frac{1}{|C|_q \sqrt{|S|_q}}\frac{1}{(2 \pi i)^2} \oint \oint \frac{ \mathcal{A}_{\mathrm{nK}}(x,u)(1-q^2x^2) \prod_{R|h} M_R(x,u)}{(1-qu)(1-q^2x)(1-q^{3/2}u) x^{\frac{g}{2}+1}(q^{3/2}u)^{\frac{X-\deg(C^2D)}{3}}} \, \frac{dx}{x} \, \frac{du}{u},$$
where we are integrating along circles of radii $|u|<1/q^{\frac{3}{2}}$ and $|x|<1/{q^2}$. As in \cite{DFL}, we have that $\mathcal{A}_{\mathrm{nK}}(x,u)$ is analytic for $|x|<1/q, |xu|<1/q, |xu^2|<1/q^2$. We initially pick $|u|=1/q^{\frac{3}{2}+\varepsilon}$ and $|x|=1/q^{2+\varepsilon}$. We shift the contour over $x$ to $|x|=1/q^{1+\varepsilon}$ and we encounter a pole at $x=1/q^2$. Note that the new double integral will be bounded by $O(q^{\frac{g}{2}+\varepsilon g})$. Then
$$ S_{1,\cube} =\frac{q^{g+2}}{\zeta_q(3) |C|_q \sqrt{|S|_q}} \frac{1}{2 \pi i} \oint \frac{\mathcal{A}_{\mathrm{nK}}( \tfrac{1}{q^2},u) \prod_{R|h} M_R ( \tfrac{1}{q^2},u) }{(1-qu)(1-q^{3/2}u) (q^{3/2}u)^{\frac{X- \deg(C^2D)}{3}}} \, \frac{du}{u}+ O(q^{\frac{g}{2}+\varepsilon g}).$$
We shift the contour of integration to $|u|=q^{-\varepsilon}$ and we encounter two simple poles: one at $u=1/q^{\frac{3}{2}}$ and one at $u=1/q$. Evaluating the residues, we get that
\begin{align}
S_{1,\cube} =& \frac{ q^{g+2} \zeta_q(3/2)}{\zeta_q(3)|C|_q \sqrt{|S|_q}} \mathcal{A}_{\mathrm{nK}} \left ( \frac{1}{q^2}, \frac{1}{q^{3/2}} \right )  \prod_{\substack{R \in \mathbb{F}_q[T] \\ \deg(R) \text{ even} \\ R|h}} M_R \Big( \frac{1}{q^2},\frac{1}{q^{3/2}} \Big) \nonumber \\
&+ \frac{q^{g+2-\frac{X}{6}} \zeta_q(1/2)}{\zeta_q(3)|C^2S|_q^{1/3}} \mathcal{A}_{\mathrm{nK}} \left (\frac{1}{q^2}, \frac{1}{q}  \right )  \prod_{\substack{R \in \mathbb{F}_q[T] \\ \deg(R) \text{ even} \\ R|h}} M_R \Big( \frac{1}{q^2},\frac{1}{q} \Big) +O(q^{g-\frac{X}{2}+\varepsilon g}). \label{scube}
\end{align}


\subsection{The contribution from non-cubes} \label{non-cube}
 
 Let $S_{11}$ be the first term in equation \eqref{non-cubes} and $S_{12}$ the second. Note that it is enough to bound $S_{11}$, since bounding $S_{12}$ will follow in a similar way. We use equation \eqref{sieve_F} again for the sum over $F$ and we have
\begin{equation}
S_{11}=  \sum_{\substack{f \in \mathcal{M}_{q, \leq X} \\ fh \neq \tinycube}} \frac{1}{q^{\deg(f)/2}} \sum_{\substack{D \in \mathcal{M}_{q, \leq \frac{g}{2}+1} \\ (D,f)=1}} \mu(D) \sum_{\substack{F \in \mathcal{H}_{q^2,\frac{g}{2}+1-\deg(D)} \\ (F,D)=1}} \chi_F(fh).
\label{non_cube_term}
\end{equation}
Remark that we used  that $\chi_D(fh)=1$ because $D,f,h \in \mathbb{F}_q[T]$. Looking at the generating series of the sum over $F$, we have
\begin{align*}
 \sum_{\substack{F \in \mathcal{H}_{q^2} \\ (F,D)=1}} \chi_F(fh) u^{\deg(F)} = \prod_{\substack{P \in \mathbb{F}_{q^2}[T] \\ P \nmid Dfh}} \left ( 1+\chi_P (fh) u^{\deg(P)} \right ) = \frac{ \mathcal{L}_{q^2} \left(u, \chi_{fh}\right) }{\mathcal{L}_{q^2}(u^2, \overline{\chi_{fh}})} \prod_{\substack{P \in \mathbb{F}_{q^2}[T] \\ P \nmid fh \\ P\mid D}} \frac{  1- \chi_P(fh) u^{\deg(P)} }{   1- \overline{\chi_P}(fh) u^{2 \deg(P)} }.
\end{align*}
Using Perron's formula  (Lemma \ref{perron}) and the generating series above, we have
\begin{align*}
 \sum_{\substack{F \in \mathcal{H}_{q^2,\frac{g}{2}+1-\deg(D)} \\ (F,D)=1}} \chi_F(fh) = \frac{1}{2 \pi i} \oint \frac{ \mathcal{L}_{q^2} \left(u,\chi_{fh}\right) }{\mathcal{L}_{q^2}(u^2, \overline{\chi_{fh}}) u^{\frac{g}{2}+1-\deg(D)}} \prod_{\substack{P \in \mathbb{F}_{q^2}[T] \\ P \nmid fh \\ P\mid D}} \frac{  1- \chi_P(fh) u^{\deg(P)} }{   1- \overline{\chi_P}(fh) u^{2 \deg(P)} } \, \frac{du}{u},
\end{align*}
where the integral takes place along a circle of radius $|u|= 1/q$ around the origin. Now we use the Lindel\"of bound for the $L$--function in the numerator and a lower bound for the $L$--function in the denominator (equations \eqref{lindelof} and \eqref{folednil}) and we obtain
$$ \left|  \mathcal{L}_{q^2} \left(u,\chi_{fh}\right) \right| \ll q^{2 \varepsilon \deg(fh)}, \, \, \, \left| \mathcal{L}_{q^2} (u^2, \overline{\chi_{fh}})  \right| \gg q^{-2 \varepsilon\deg(fh)}.$$

Therefore, 
$$  \sum_{\substack{F \in \mathcal{H}_{q^2,\frac{g}{2}+1-\deg(D)} \\ (F,D)=1}} \chi_F(fh) \ll q^{\frac{g}{2}-\deg(D)} q^{4 \varepsilon \deg(fh)+2 \varepsilon \deg(D)}.$$
Trivially bounding the sums over $D$ and $f$ in \eqref{non_cube_term} gives a total upper bound of
\begin{equation*}
S_{11}\ll q^{\frac{X+g}{2}+\varepsilon g},
\end{equation*}
and similarly for $S_{12}$.

\subsection{The dual term}

Now we focus on $S_{1  , \text{dual}}$.  From \eqref{dual*}, using \eqref{Gauss-root} and \eqref{gauss-sum-prop},
we have
 \begin{align}
 S_{1, \mathrm{dual}} =& q^{-\frac{g}{2}-1}\sum_{f \in \mathcal{M}_{q, \leq g-X-1}}\frac{1}{q^{\deg(f)/2}}
 \sum_{\substack{F \in \mathcal{H}_{q^2,\frac{g}{2}+1} \\ (F,fh)=1 \\ P \mid F \Rightarrow P \not\in \F_q[T] }}G_{q^2}(fh^2,F) \label{dual21}\\
& +\frac{q^{-\frac{g}{2}-1}}{1-\sqrt{q}} \sum_{f \in \mathcal{M}_{q, g-X}} \frac{1}{q^{\deg(f)/2}} 
\sum_{\substack{F \in \mathcal{H}_{q^2,\frac{g}{2}+1} \\ (F,fh)=1 \\ P \mid F \Rightarrow P \not\in \F_q[T] }}G_{q^2}(fh^2,F).\label{dual22}
 \end{align}
We write  $S_{1,\text{dual}}= S_{11,\text{dual}}+S_{12,\text{dual}}$ for the terms \eqref{dual21} and \eqref{dual22} respectively on the right-hand side of the above equation. 

We have 
\begin{eqnarray}
 \sum_{\substack{F \in \mathcal{H}_{q^2, \frac{g}{2}+1} \\ (F,fh)=1 \\ P \mid F \Rightarrow P \not\in \F_q[T]}} G_{q^2}(fh^2, F) &=& \sum_{\substack{N \in \F_q[T]\\ \deg(N) \leq \frac{g}{2} + 1\\(N,fh)=1}} \mu(N) \sum_{\substack{F \in \mathcal{M}_{ q^2,\frac{g}{2}+1-\deg(N)}\\
(F, fh)=1}} G_{q^2}(fh^2, NF) \nonumber \\
&=& \sum_{\substack{N \in \F_q[T]\\ \deg(N) \leq \frac{g}{2} + 1\\(N,fh)=1}} \mu(N) G_{q^2}(fh^2, N)   \sum_{\substack{F \in \mathcal{M}_{ q^2,\frac{g}{2}+1-\deg(N)}\\
(F, Nfh)=1}} G_{q^2}(fh^2N, F). \label{sieve_D}
\end{eqnarray}
Now let $(f,h)=B$ and write $f= B \tilde{f}$ and $h = B \tilde{h}$ where $\tilde{f}= f_1 f_2^2 f_3^3$ and $\tilde{h} = h_1 h_2^2 h_3^3$ with $(f_1,f_2)=1$, $(h_1,h_2)=1$ and $f_1,f_2,h_1,h_2$ square-free. Using Proposition \ref{big-F-tilde-corrected}, we get that
\begin{align*}
 \sum_{\substack{F \in \mathcal{M}_{ q^2,\frac{g}{2}+1-\deg(N)}\\
(F, fhN)=1}} & G_{q^2}(fh^2N, F) = \delta_{f_2h_1=1} \frac{q^{\frac{4g}{3}+\frac{8}{3}-4 \deg(N)-\frac{4}{3} \deg(f_1)-\frac{4}{3} \deg(h_2)-\frac{8}{3} [\frac{g}{2}+1+\deg(f_1h_2)]_3 }}{\zeta_{q^2}(2)}  \\
& \times \overline{G_{q^2}(1,f_1h_2N)} \rho(1, [g/2+1+\deg(f_1h_2)]_3) \prod_{\substack{P \in \mathbb{F}_{q^2}[T] \\ P|fhN}} \Big(1+ \frac{1}{|P|_{q^2}} \Big)^{-1} \\
& + O \Big( \delta_{f_2h_1=1} q^{\frac{g}{3}+\varepsilon g - \deg(N) - \frac{\deg(f_1)}{3}-\frac{\deg(h_2)}{3}} \Big) + \frac{1}{2 \pi i} \oint_{|u|=q^{-2\sigma}} \frac{ \Tilde{\Psi}_{q^2}(fh^2N,u)}{u^{\frac{g}{2}+1-\deg(D)}} \, \frac{du}{u},
\end{align*}
with $2/3<\sigma<4/3$. Combining \eqref{dual21} and \eqref{sieve_D}, we write $S_{11,\text{dual}}=M_1+E_1$, where $M_1$ corresponds to the first term above. Using equation \eqref{Gauss-size} and following on similar steps as Section 4.4 in \cite{DFL}, we get that 
\begin{align*}
M_1 =& \frac{q^{5g/6+5/3}}{\zeta_{q^2}(2)} \sum_{\substack{B | h \\ \deg(B) \leq g-X-1}} \frac{1}{q^{\deg(B)/2}} \sum_{\substack{\deg(\tilde{f}) \leq g-X-1-\deg(B)\\(\tilde{f},\tilde{h})=1}} \frac{ \delta_{f_2h_1=1} q^{-\frac{8}{3} [\frac{g}{2}+1+\deg(f_1h_2)]_3}}{q^{\deg(\tilde{f})/2+\deg(f_1h_2)/3}} \\
& \times \rho(1, [g/2+1+\deg(f_1h_2)]_3) \prod_{\substack{P \in \mathbb{F}_{q^2}[T] \\ P|\tilde{f}h}} \Big(1+ \frac{1}{|P|_{q^2}} \Big)^{-1} \\
& \times \sum_{\substack{N \in \F_q[T]\\ \deg(N) \leq \frac{g}{2} + 1\\(N,fh)=1}} \mu(N)  q^{-2 \deg(N)} \prod_{\substack{P \in \mathbb{F}_{q^2}[T] \\ P|N}} \Big(1+ \frac{1}{|P|_{q^2}} \Big)^{-1}.
\end{align*}

Similarly as in \cite{DFL}, we use Perron's formula and the generating series to rewrite the sum over $N$. Again, the only difference is the presence of $h$ in the formulas below. We have
\begin{align*}
\sum_{\substack{N \in \F_q[T]\\ \deg(N) \leq \frac{g}{2} + 1\\(N,fh)=1}} \mu(N)     q^{-2\deg(N)}  & \prod_{\substack{P \in \mathbb{F}_{q^2}[T] \\ P\mid N}} \left (1+ \frac{1}{|P|_{q^2}} \right )^{-1}  = \frac{1}{2 \pi i} \oint \frac{\mathcal{J}_{\mathrm{nK}}(w)}{w^{g/2+1}(1-w)} \\
& \times \prod_{\substack{R \in \mathbb{F}_q[T] \\ \deg(R)  \,\mathrm{ odd} \\ R \mid fh}} A_{\mathrm{dual},R}(w)^{-1} \prod_{\substack{R \in \mathbb{F}_q[T] \\ \deg(R)  \,\mathrm{ even} \\ R\mid fh}} B_{\mathrm{dual},R}(w)^{-1} \, \frac{dw}{w},
\end{align*}
where 
\[\mathcal{J}_{\mathrm{nK}}(w) =  \prod_{\substack{R \in \mathbb{F}_q[T] \\ \deg(R)  \,\mathrm{ odd}}} A_{\mathrm{dual},R}(w) \prod_{\substack{R \in \mathbb{F}_q[T] \\ \deg(R)  \,\mathrm{ even}}} B_{\mathrm{dual},R}(w),\]
and 
\[A_{\mathrm{dual},R}(w)=1-\frac{w^{\deg(R)}}{q^{2\deg(R)}(1+\frac{1}{q^{2\deg(R)}})}\quad \mbox{ and }\quad B_{\mathrm{dual},R}(w)=1-\frac{w^{\deg(R)}}{q^{2\deg(R)}(1+\frac{1}{q^{\deg(R)}})^2}.\]
Introducing the sums over $B$ and $\tilde{f}$, we have
\begin{align} \nonumber
M_1=&\frac{q^{5g/6+5/3}}{\zeta_{q^2}(2)} \sum_{\substack{B | h \\ \deg(B) \leq g-X-1}} \frac{1}{q^{\deg(B)/2}} \sum_{\substack{\deg(\tilde{f}) \leq g-X-1-\deg(B) \\ (\tilde{f},\tilde{h})=1}} \frac{ \delta_{f_2h_1=1} q^{-\frac{8}{3} [\frac{g}{2}+1+\deg(f_1h_2)]_3}}{q^{\deg(\tilde{f})/2+\deg(f_1h_2)/3}} \\  \label{first-M1}
& \times \rho(1, [g/2+1+\deg(f_1h_2)]_3)
\prod_{\substack{R \in \mathbb{F}_q[T] \\ \deg(R)  \,\mathrm{ odd} \\ R \mid fh}} \left(1+\frac{1}{q^{2\deg(R)}}\right)^{-1}
\times \prod_{\substack{R \in \mathbb{F}_q[T] \\ \deg(R)  \,\mathrm{ even} \\ R \mid fh}} \left(1+\frac{1}{q^{\deg(R)}}\right)^{-2} \\
\nonumber
& \times \frac{1}{2 \pi i} \oint \frac{\mathcal{J}_{\mathrm{nK}}(w)}{w^{g/2+1}(1-w)}\prod_{\substack{R \in \mathbb{F}_q[T] \\ \deg(R)  \,\mathrm{ odd} \\ R \mid fh}} A_{\mathrm{dual},R}(w)^{-1} \prod_{\substack{R \in \mathbb{F}_q[T] \\ \deg(R)  \,\mathrm{ even} \\ R\mid fh}} B_{\mathrm{dual},R}(w)^{-1} \, \frac{dw}{w}.
\end{align}

We let
\begin{align*}
\mathcal{H}_{\mathrm{nK}}(h;u,w)= &\sum_{(\tilde{f}, \tilde{h})=1}
\frac{\delta_{f_2=1}}{q^{\deg(\tilde{f})/2+\deg(f_1)/3}} 
\prod_{\substack{R \in \mathbb{F}_q[T] \\ \deg(R)  \,\mathrm{ odd} \\ R \mid \tilde{f} \\ R \nmid h}} C_R(w)^{-1} \prod_{\substack{R \in \mathbb{F}_q[T] \\ \deg(R)  \,\mathrm{ even} \\ R\mid \tilde{f} \\ R \nmid h}} D_R(w)^{-1} u^{\deg(f)},
\end{align*}
where 
\begin{align*} C_R(w) & = A_{\mathrm{dual},R}(w) \left(1+\frac{1}{q^{2\deg(R)}}\right) =
1+\frac{1}{q^{2\deg(R)}}-\frac{w^{\deg(R)}}{q^{2\deg(R)}} \\ D_R(w) &=B_{\mathrm{dual},R}(w)\left(1+\frac{1}{q^{\deg(R)}}\right)^2=\left(1+\frac{1}{q^{\deg(R)}}\right)^2-\frac{w^{\deg(R)}}{q^{2\deg(R)}}.
\end{align*}

Then we can write down an Euler product for $\mathcal{H}_{\mathrm{nK}}(h;u,w)$ and we have that
\begin{align*}
 \mathcal{H}_{\mathrm{nK}}(h;u,w)= & \prod_{\substack{R \in \mathbb{F}_q[T] \\ \deg(R)  \,\mathrm{ odd} \\ R \nmid h}} \left[1+C_R(w)^{-1} \left ( \frac{1}{q^{\deg(R)/3}} \sum_{j=0}^{\infty} \frac{ u^{(3j+1) \deg(R)}}{q^{(3j+1) \deg(R)/2}} + \sum_{j=1}^{\infty} \frac{ u^{3j \deg(R)}}{q^{3j\deg(R)/2}}\right ) \right] \\
 & \times  \prod_{\substack{R \in \mathbb{F}_q[T] \\ \deg(R)  \,\mathrm{ even} \\ R \nmid h}} \left[1+D_R(w)^{-1} \left ( \frac{1}{q^{\deg(R)/3}} \sum_{j=0}^{\infty} \frac{ u^{(3j+1) \deg(R)}}{q^{(3j+1) \deg(R)/2}} + \sum_{j=1}^{\infty} \frac{ u^{3j \deg(R)}}{q^{3j\deg(R)/2}}\right ) \right] \\
&\times  \prod_{\substack{R \in \mathbb{F}_q[T] \\  R|B\\ R \nmid \tilde{h} }} \left[1+ \left ( \frac{1}{q^{\deg(R)/3}} \sum_{j=0}^{\infty} \frac{ u^{(3j+1) \deg(R)}}{q^{(3j+1) \deg(R)/2}} + \sum_{j=1}^{\infty} \frac{ u^{3j \deg(R)}}{q^{3j\deg(R)/2}}\right ) \right]. 
\end{align*}

Following \cite{DFL}, let
\begin{align*}
 \mathcal{H}_{\mathrm{nK}}(u,w)= & \prod_{\substack{R \in \mathbb{F}_q[T] \\ \deg(R)  \,\mathrm{ odd} }} \left[1+C_R(w)^{-1} \left ( \frac{1}{q^{\deg(R)/3}} \sum_{j=0}^{\infty} \frac{ u^{(3j+1) \deg(R)}}{q^{(3j+1) \deg(R)/2}} + \sum_{j=1}^{\infty} \frac{ u^{3j \deg(R)}}{q^{3j\deg(R)/2}}\right ) \right] \\
 & \times  \prod_{\substack{R \in \mathbb{F}_q[T] \\ \deg(R)  \,\mathrm{ even}}} \left[1+D_R(w)^{-1} \left ( \frac{1}{q^{\deg(R)/3}} \sum_{j=0}^{\infty} \frac{ u^{(3j+1) \deg(R)}}{q^{(3j+1) \deg(R)/2}} + \sum_{j=1}^{\infty} \frac{ u^{3j \deg(R)}}{q^{3j\deg(R)/2}}\right ) \right]\\
 =& \mathcal{Z} \left ( \frac{u}{q^{5/6}}\right ) \mathcal{B}_{\mathrm{nK}}(u,w),
\end{align*}
with $\mathcal{B}_{\mathrm{nK}}(u,w)$ analytic in a wider region (for example, $\mathcal{B}_{\mathrm{nK}}(u,w)$ is absolutely convergent for $|u|<q^{\frac{11}{6}}$ and $|uw|< q^{\frac{11}{6}}$). 

After simplifying and using the previous computations from \cite{DFL}, we have
\begin{align}
\mathcal{H}_{\mathrm{nK}} & (h;u,w) = \mathcal{H}_{\mathrm{nK}}(u,w) 
\prod_{\substack{R \in \mathbb{F}_q[T] \\ \deg(R)  \,\mathrm{ odd} \\ R|h}} \left[1+C_R(w)^{-1} \left ( \frac{u^{\deg(R)}}{|R|_q^{5/6} (1- \frac{u^{3 \deg(R)}}{|R|_q^{3/2}})} + \frac{ u^{3 \deg(R)}}{ |R|_q^{3/2}-u^{3 \deg(R)}}\right )\right]^{-1} \nonumber \\
& \times  \prod_{\substack{R \in \mathbb{F}_q[T] \\ \deg(R)  \,\mathrm{ even} \\ R|h}} \left[1+D_R(w)^{-1} \left ( \frac{u^{\deg(R)}}{|R|_q^{5/6} (1- \frac{u^{3 \deg(R)}}{|R|_q^{3/2}})} + \frac{ u^{3 \deg(R)}}{ |R|_q^{3/2}-u^{3 \deg(R)}}\right ) \right]^{-1} \nonumber \\
& \times \prod_{\substack{R \in \mathbb{F}_q[T]  \\ R|B \\ R \nmid \tilde{h}}} \left[1+ \left ( \frac{u^{\deg(R)}}{|R|_q^{5/6} (1- \frac{u^{3 \deg(R)}}{|R|_q^{3/2}})} + \frac{ u^{3 \deg(R)}}{ |R|_q^{3/2}-u^{3 \deg(R)}}\right )\right] \nonumber \\
=&\mathcal{Z} \left ( \frac{u}{q^{5/6}}\right ) \mathcal{B}_{\mathrm{nK}}(u,w) \prod_{\substack{R \in \mathbb{F}_q[T] \\ \deg(R)  \,\mathrm{ odd} \\ R|h}} E_R(u,w)^{-1} \prod_{\substack{R \in \mathbb{F}_q[T] \\ \deg(R)  \,\mathrm{ even} \\ R|h}} G_R(u,w)^{-1}  \prod_{\substack{R \in \mathbb{F}_q[T]  \\ R|B \\ R \nmid \tilde{h}}} F_R(u) . \label{b-def}
\end{align}


We now rewrite $M_1$ by using the generating series above and Perron's formula for the sum over $\tilde{f}$. We need to deal with the  terms
involving $[g/2+1+\deg(f_1h_2)]_3$ appearing in \eqref{first-M1}. 
We notice that
if $g/2+1+\deg(f_1h_2) \equiv 0 \pmod 3$, then $\deg(f_1) \equiv g - \deg(h_2)-1 \pmod 3$, and in that case, $\rho (1,[g/2+1+\deg(f_1h_2)]_3)=1$. If $g/2+1+\deg(f_1h_2) \equiv 1 \pmod 3$, then $\deg(f_1) \equiv g - \deg(h_2) \pmod 3$. In this case we also have $\tau(\chi_3)=q$  by Proposition \ref{big-F-tilde-corrected}, and $\rho(1, [g/2+1+\deg(f_1h_2)]_3)=q^3$, since we are working over $\F_{q^2}$. Using Perron's formula  (Lemma \ref{perron})  twice and keeping in mind that $X \equiv 2 \deg(h) \pmod 3$, we get
\begin{align*}
& M_1 =\frac{q^{5g/6+5/3}}{\zeta_{q^2}(2)} \sum_{\substack{B|h \\ \deg(B) \leq g-X-1}} \frac{\delta_{h_1=1}}{q^{\deg(B)/2+\deg(h_2)/3}} \frac{1}{(2 \pi i)^2} \oint \oint \frac{\mathcal{H}_{\mathrm{nK}}(h;u,w) \mathcal{J}_{\mathrm{nK}}(w)}{w^{g/2+1}(1-w)} \\
& \times  \prod_{\substack{R \in \mathbb{F}_q[T] \\ \deg(R)  \,\mathrm{ odd} \\ R|h}} C_R(w)^{-1}  \prod_{\substack{R \in \mathbb{F}_q[T] \\ \deg(R)  \,\mathrm{ even} \\ R|h}} D_R(w)^{-1}   \left[ \frac{1}{u^{g-X-1-\deg(B)} (1-u^3)} + \frac{ q^{1/3}}{u^{g-X-3-\deg(B)}(1-u^3)} \right] \, \frac{dw}{w} \, \frac{du}{u}.
\end{align*}
We proceed as in \cite{DFL}, shifting the contour of integration over $w$ to $|w|=q^{1-\varepsilon}$, and computing the residue at $w=1$. Writing 
\[ \mathcal{K}_{\mathrm{nK}}(u)=\mathcal{B}_{\mathrm{nK}}(u,1)\mathcal{J}_{\mathrm{nK}}(1),\]
we get that
\begin{align*}
M_1=& \frac{q^{5g/6+5/3}}{\zeta_{q^2}(2)}  \sum_{\substack{B |h \\ \deg(B) \leq g-X-1}}\frac{\delta_{h_1=1}}{q^{\deg(B)/2+\deg(h_2)/3}} \frac{1}{2 \pi i}  \oint  \frac{ \mathcal{K}_{\mathrm{nK}}(u)}{(1-uq^{1/6})(1-u^3) u^{g-X-1}} (1+ q^{1/3}u^2) \\
& \times  \prod_{\substack{R \in \mathbb{F}_q[T] \\ \deg(R)  \,\mathrm{ odd} \\ R|h}} E_R(u,1)^{-1} C_R(1)^{-1} \prod_{\substack{R \in \mathbb{F}_q[T] \\ \deg(R)  \,\mathrm{ even} \\ R|h}} G_R(u,1)^{-1} D_R(1)^{-1} \prod_{\substack{R \in \mathbb{F}_q[T] \\ R|B \\ R \nmid \tilde{h}}} F_R(u)  \, \frac{du}{u} \\
& +O\left(q^{\frac{g}{2}-\frac{X}{6}+\varepsilon g}\right).
\end{align*}

Shifting the contour of integration to $|u|=q^{-\varepsilon}$ and computing the residue at $u=q^{-\frac{1}{6}}$, 
\begin{align*}
M_1 = &2 q^{g-\frac{X}{6}+2} \frac{  \mathcal{K}_{\mathrm{nK}}(q^{-1/6})}{\zeta_{q^2}(2) (\sqrt{q}-1)}    \sum_{\substack{B |h \\ \deg(B) \leq g-X-1}}\frac{\delta_{h_1=1}}{q^{2\deg(B)/3+\deg(h_2)/3}} \prod_{\substack{R \in \mathbb{F}_q[T] \\ \deg(R)  \,\mathrm{ odd} \\ R|h}} E_R(q^{-1/6},1)^{-1} C_R(1)^{-1} \\
& \times  \prod_{\substack{R \in \mathbb{F}_q[T] \\ \deg(R)  \,\mathrm{ even} \\ R|h}} G_R(q^{-1/6},1)^{-1} D_R(1)^{-1} \prod_{\substack{R \in \mathbb{F}_q[T]  \\ R|B \\ R \nmid \tilde{h}}} F_R(q^{-1/6}) + O \Big(q^{\frac{5g}{6}+\varepsilon g} \Big).
\end{align*}
\kommentar{Now note that we have $CS^2E^3 = B h_2^2 h_3^3$ and from here it follows that $C|B$. Write $B=CB_1$. Then $S^2 E^3 = B_1 h_2^2 h_3^3$. Further write $B_1=T_1 T_2^2 T_3^3$ with $T_1, T_2$ square-free and coprime. Since $S^2E^3= T_1 T_2^2 T_3^3 h_2^2 h_3^3$ it follows that $T_1=1, S=T_2h_2, E=T_3h_3$. Then the sum over $B$ in the equation above is equal to
\begin{align*}
& \frac{1}{|C|_q^{2/3} |S|_q^{1/3}}  \sum_{T_2|S} \frac{1}{|T_2|_q} \sum_{T_3|E} \frac{1}{|T_3|_q^2}   \prod_{\substack{R \in \mathbb{F}_q[T] \\ \deg(R)  \,\mathrm{ odd} \\ R | C T_2T_3 \\ R \nmid  \frac{SE}{T_2T_3}}} F_R(q^{-1/6})  \prod_{\substack{R \in \mathbb{F}_q[T] \\ \deg(R)  \,\mathrm{ even} \\ R | C T_2T_3 \\ R \nmid  \frac{SE}{T_2T_3}}} I_R(q^{-1/6}) \\
&=  \frac{1}{|C|_q^{2/3} |S|_q^{1/3}}   \prod_{\substack{R \in \mathbb{F}_q[T] \\ \deg(R)  \,\mathrm{ odd} \\ R | C \\ R \nmid SE}} F_R(q^{-1/6})  \prod_{\substack{R \in \mathbb{F}_q[T] \\ \deg(R)  \,\mathrm{ even} \\ R | C \\ R \nmid SE}} I_R(q^{-1/6})\\
& \times  \sum_{T_2|S} \frac{1}{|T_2|_q} \sum_{T_3|E} \frac{1}{|T_3|_q^2}  \prod_{\substack{R \in \mathbb{F}_q[T] \\ \deg(R)  \,\mathrm{ odd} \\ R |  T_2T_3 \\ R \nmid  \frac{SE}{T_2T_3}}} F_R(q^{-1/6})  \prod_{\substack{R \in \mathbb{F}_q[T] \\ \deg(R)  \,\mathrm{ even} \\ R |  T_2T_3 \\ R \nmid  \frac{SE}{T_2T_3}}} I_R(q^{-1/6})
\end{align*}}
Now note that we can extend the sum over $B$ to include all $B|h$ at the expense of an error term of size $O(\tau(h) / q^{\frac{2}{3}(g-X)})$, giving a total error term of size $O(q^{\frac{g}{3}+\frac{X}{2}+\varepsilon g})$. Then
\begin{align}
 M_1 =& 2 q^{g-\frac{X}{6}+2} \frac{  \mathcal{K}_{\mathrm{nK}}(q^{-1/6})}{\zeta_{q^2}(2) (\sqrt{q}-1)}   \prod_{\substack{R \in \mathbb{F}_q[T] \\ \deg(R)  \,\mathrm{ odd} \\ R|h}} E_R(q^{-1/6},1)^{-1} C_R(1)^{-1}  \prod_{\substack{R \in \mathbb{F}_q[T] \\ \deg(R)  \,\mathrm{ even} \\ R|h}} G_R(q^{-1/6},1)^{-1} D_R(1)^{-1}
 \nonumber  \\
& \times   \sum_{B |h} \frac{\delta_{h_1=1}}{q^{2\deg(B)/3+\deg(h_2)/3}}  \prod_{\substack{R \in \mathbb{F}_q[T]  \\ R|B \\ R \nmid \tilde{h}}} F_R(q^{-1/6}) + O \Big(q^{\frac{5g}{6}+\varepsilon g}+q^{\frac{g}{3}+\frac{X}{2}+\varepsilon g} \Big). \label{m1}
\end{align}
Recall that $h=CS^2 E^3$ with $C,S$ square-free and coprime. Then for the sum over $B$ we can write an Euler product as follows:
\begin{align*}
 \sum_{B |h} & \frac{\delta_{h_1=1}}{q^{2\deg(B)/3+\deg(h_2)/3}}  \prod_{\substack{R \in \mathbb{F}_q[T]  \\ R|B \\ R \nmid \tilde{h}}} F_R(q^{-1/6}) = \prod_{\substack{ R \in \mathbb{F}_q[T] \\ R|C}} \Big(\sum_{\substack{j=1 \\ j \equiv 1 \pmod 3}}^{\ord_R(h)-1} \frac{1}{|R|_q^{2j/3}} +\sum_{\substack{j=2 \\ j \equiv 2 \pmod 3}}^{ \ord_R(h)-1} \frac{1}{|R|_q^{\frac{1}{3}+\frac{2j}{3}}}+ \frac{F_R(q^{-1/6})}{|R|_q^{ \frac{2 \ord_R(h)}{3}}}  \Big) \\
 & \times \prod_{\substack{ R \in \mathbb{F}_q[T] \\ R|S}} \Big(\sum_{\substack{j=2 \\ j \equiv 2 \pmod 3}}^{\ord_R(h)-1} \frac{1}{|R|_q^{2j/3}} +\sum_{\substack{j=0 \\ j \equiv 0 \pmod 3}}^{ \ord_R(h)-1} \frac{1}{|R|_q^{\frac{1}{3}+\frac{2j}{3}}}+ \frac{F_R(q^{-1/6})}{|R|_q^{ \frac{2 \ord_R(h)}{3}}}  \Big) \\
 & \times \prod_{\substack{ R \in \mathbb{F}_q[T] \\ R|E \\ R \nmid CS}} \Big(\sum_{\substack{j=0 \\ j \equiv 0 \pmod 3}}^{\ord_R(h)-1} \frac{1}{|R|_q^{2j/3}} +\sum_{\substack{j=1 \\ j \equiv 1 \pmod 3}}^{ \ord_R(h)-1} \frac{1}{|R|_q^{\frac{1}{3}+\frac{2j}{3}}}+ \frac{F_R(q^{-1/6})}{|R|_q^{ \frac{2 \ord_R(h)}{3}}}  \Big).
\end{align*}
Simplifying and using the fact that
$F_R(q^{-1/6}) = \frac{|R|_q}{|R|_q-1},$ we get that
\begin{align*}
 \sum_{B |h} & \frac{\delta_{h_1=1}}{q^{2\deg(B)/3+\deg(h_2)/3}}  \prod_{\substack{R \in \mathbb{F}_q[T]  \\ R|B \\ R \nmid \tilde{h}}} F_R(q^{-1/6}) = \frac{1}{|C|_q^{2/3} |S|_q^{1/3}} \prod_{\substack{R \in \mathbb{F}_q[T] \\ R|h}} \frac{|R|_q}{|R|_q-1}.
\end{align*}
Using the above and equation \eqref{m1}, it follows that
\begin{align*}
M_1 =& 2 q^{g-\frac{X}{6}+2} \frac{  \mathcal{K}_{\mathrm{nK}}(q^{-1/6})}{|C|_q^{2/3} |S|_q^{1/3} \zeta_{q^2}(2) (\sqrt{q}-1)}   \prod_{\substack{R \in \mathbb{F}_q[T] \\ \deg(R)  \,\mathrm{ odd} \\ R|h}} E_R(q^{-1/6},1)^{-1} C_R(1)^{-1}  \\
& \times \prod_{\substack{R \in \mathbb{F}_q[T] \\ \deg(R)  \,\mathrm{ even} \\ R|h}}  G_R(q^{-1/6},1)^{-1} D_R(1)^{-1}  \times \prod_{\substack{R \in \mathbb{F}_q[T] \\ R|h}} \frac{|R|_q}{|R|_q-1}+O \Big(q^{\frac{5g}{6}+\varepsilon g}+q^{\frac{g}{3}+\frac{X}{2}+\varepsilon g} \Big).
\end{align*}
Putting everything together, we get that
\begin{align*}
S_{11,\mathrm{dual}} =& \frac{ 2q^{g-\frac{X}{6}+2} \mathcal{K}_{\mathrm{nK}}(q^{-1/6})}{|C|_q^{2/3} |S|_q^{1/3} \zeta_{q^2}(2) (\sqrt{q}-1)}   \prod_{\substack{R \in \mathbb{F}_q[T] \\ \deg(R)  \,\mathrm{ odd} \\ R|h}} E_R(q^{-1/6},1)^{-1} C_R(1)^{-1}  \prod_{\substack{R \in \mathbb{F}_q[T] \\ \deg(R)  \,\mathrm{ even} \\ R|h}}  G_R(q^{-1/6},1)^{-1} D_R(1)^{-1} \\
& \times \prod_{\substack{R \in \mathbb{F}_q[T] \\ R|h}} \frac{|R|_q}{|R|_q-1} +O \Big(q^{\frac{5g}{6}+\varepsilon g}+q^{\frac{g}{3}+\frac{X}{2}+\varepsilon g} \Big) \\
& +q^{-\frac{g}{2}-1} \frac{1}{2 \pi i} \oint_{|u|=q^{-2\sigma}} \sum_{f \in \mathcal{M}_{q,\leq g-X-1}} \frac{1}{q^{\deg(f)/2}} \sum_{\substack{N \in \mathbb{F}_q[T] \\ \deg(N) \leq \frac{g}{2}+1 \\ (N,fh)=1}} \mu(N) G_{q^2}(fh^2,N) \frac{\Tilde{\Psi}_{q^2}(fh^2N,u)}{u^{g/2+1-\deg(N)}} \, \frac{du}{u}.
\end{align*} 
We treat $S_{12,\mathrm{dual}}$ similarly and since $\deg(f) = g-X$ we have $[g/2+1+ \deg(f_1h_2)]_3 = 1$. Then as before $\rho(1,1)=\tau(\chi_3) = q^3$, and
we get that
\begin{align*}
S_{12,\mathrm{dual}} =& \frac{q^{g-\frac{X}{6}+2} \mathcal{K}_{\mathrm{nK}}(q^{-1/6})}{|C|_q^{2/3} |S|_q^{1/3} \zeta_{q^2}(2) (1-\sqrt{q})}   \prod_{\substack{R \in \mathbb{F}_q[T] \\ \deg(R)  \,\mathrm{ odd} \\ R|h}} E_R(q^{-1/6},1)^{-1} C_R(1)^{-1}  \prod_{\substack{R \in \mathbb{F}_q[T] \\ \deg(R)  \,\mathrm{ even} \\ R|h}}  G_R(q^{-1/6},1)^{-1} D_R(1)^{-1} \\
& \times \prod_{\substack{R \in \mathbb{F}_q[T] \\ R|h}} \frac{|R|_q}{|R|_q-1} +O \Big(q^{\frac{5g}{6}+\varepsilon g}+q^{\frac{g}{3}+\frac{X}{2}+\varepsilon g} \Big) \\
&+  \frac{ q^{-\frac{g}{2}-1}}{1-\sqrt{q}} \frac{1}{2 \pi i} \oint_{|u|=q^{-2\sigma}} \sum_{f \in \mathcal{M}_{q, g-X}} \frac{1}{q^{\deg(f)/2}} \sum_{\substack{N \in \mathbb{F}_q[T] \\ \deg(N) \leq \frac{g}{2}+1 \\ (N,fh)=1}} \mu(N) G_{q^2}(fh^2,N) \frac{\Tilde{\Psi}_{q^2}(fh^2N,u)}{u^{g/2+1-\deg(N)}} \, \frac{du}{u}.
\end{align*}
Combining the two equations above, we get that
\begin{align}
S_{1,\mathrm{dual}} =&- \frac{q^{g-\frac{X}{6}+2} \mathcal{K}_{\mathrm{nK}}(q^{-1/6})\zeta_q(1/2)}{|C|_q^{2/3} |S|_q^{1/3} \zeta_{q^2}(2)}  \prod_{\substack{R \in \mathbb{F}_q[T] \\ \deg(R)  \,\mathrm{ odd} \\ R|h}} E_R(q^{-1/6},1)^{-1} C_R(1)^{-1}  \prod_{\substack{R \in \mathbb{F}_q[T] \\ \deg(R)  \,\mathrm{ even} \\ R|h}}  G_R(q^{-1/6},1)^{-1} D_R(1)^{-1} \label{final-dual} \\
& \times \prod_{\substack{R \in \mathbb{F}_q[T] \\ R|h}} \frac{|R|_q}{|R|_q-1} +O \Big(q^{\frac{5g}{6}+\varepsilon g}+q^{\frac{g}{3}+\frac{X}{2}+\varepsilon g} \Big) 
\nonumber \\
&+ q^{-g/2-1} \frac{1}{2 \pi i} \oint_{|u|=q^{-2\sigma}} \sum_{f \in \mathcal{M}_{q,\leq g-X-1}} \frac{1}{q^{\deg(f)/2}} \sum_{\substack{N \in \mathbb{F}_q[T] \\ \deg(N) \leq \frac{g}{2}+1 \\ (N,fh)=1}} \mu(N) G_{q^2}(fh^2,N) \frac{\Tilde{\Psi}_{q^2}(fh^2N,u)}{u^{g/2+1-\deg(N)}} \, \frac{du}{u} \nonumber \\
&+  \frac{ q^{-\frac{g}{2}-1}}{1-\sqrt{q}} \frac{1}{2 \pi i} \oint_{|u|=q^{-2\sigma}} \sum_{f \in \mathcal{M}_{q, g-X}} \frac{1}{q^{\deg(f)/2}} \sum_{\substack{N \in \mathbb{F}_q[T] \\ \deg(N) \leq \frac{g}{2}+1 \\ (N,fh)=1}} \mu(N) G_{q^2}(fh^2,N) \frac{\Tilde{\Psi}_{q^2}(fh^2N,u)}{u^{g/2+1-\deg(N)}} \, \frac{du}{u}. \nonumber
\end{align}
Now using the work from \cite{DFL}, we have that
$ \frac{ \mathcal{K}_{\mathrm{nK}}(q^{-1/6})}{ \zeta_{q^2}(2)} = \frac{ \mathcal{A}_{\mathrm{nK}}(1/q^2,1/q) }{\zeta_q(3)}.$
When $\deg(R)$ is odd, note that we have
$$ E_R(q^{-1/6},1)^{-1} C_R(1)^{-1}  \frac{|R|_q}{|R|_q-1} =1,$$ and
when $\deg(R)$ is even, we have 
$$G_R(q^{-1/6},1)^{-1}D_R^{-1} \frac{|R|_q}{|R|_q-1} =\frac{|R|_q^2}{|R|_q^2+2|R|_q-2} = M_R \Big( \frac{1}{q^2},\frac{1}{q} \Big). $$
Hence combining \eqref{final-dual} and \eqref{scube}, we get
\begin{align*}
S_{1,\cube} &+ S_{1,\mathrm{dual}}  = \frac{ q^{g+2} \zeta_q(3/2)}{\zeta_q(3)|C|_q \sqrt{|S|_q}} \mathcal{A}_{\mathrm{nK}} \left ( \frac{1}{q^2}, \frac{1}{q^{3/2}} \right )  \prod_{\substack{R \in \mathbb{F}_q[T] \\ \deg(R) \text{ even} \\ R|h}} M_R \Big( \frac{1}{q^2},\frac{1}{q^{3/2}} \Big) \\
&+ q^{-\frac{g}{2}-1} \frac{1}{2 \pi i} \oint_{|u|=q^{-2\sigma}} \sum_{f \in \mathcal{M}_{q,\leq g-X-1}} \frac{1}{q^{\deg(f)/2}} \sum_{\substack{N \in \mathbb{F}_q[T] \\ \deg(N) \leq \frac{g}{2}+1 \\ (N,fh)=1}} \mu(N) G_{q^2}(fh^2,N) \frac{\Tilde{\Psi}_{q^2}(fh^2N,u)}{u^{g/2+1-\deg(N)}} \, \frac{du}{u} \nonumber \\
&+  \frac{ q^{-\frac{g}{2}-1}}{1-\sqrt{q}} \frac{1}{2 \pi i} \oint_{|u|=q^{-2\sigma}} \sum_{f \in \mathcal{M}_{q, g-X}} \frac{1}{q^{\deg(f)/2}} \sum_{\substack{N \in \mathbb{F}_q[T] \\ \deg(N) \leq \frac{g}{2}+1 \\ (N,fh)=1}} \mu(N) G_{q^2}(fh^2,N) \frac{\Tilde{\Psi}_{q^2}(fh^2N,u)}{u^{g/2+1-\deg(N)}} \, \frac{du}{u}\\
&+ O \Big(q^{\frac{5g}{6}+\varepsilon g}+  q^{\frac{g}{3}+\frac{X}{2}+\varepsilon g}+q^{g-\frac{X}{2}+\varepsilon g} \Big) .
\end{align*}


Using Proposition \ref{big-F-tilde-corrected} and following similar steps as in the proof on page $48$ in \cite{DFL}, we get that
\begin{align*} 
& q^{-\frac{g}{2}-1} \frac{1}{2 \pi i} \oint_{|u|=q^{-2\sigma}} \sum_{f \in \mathcal{M}_{q,\leq g-X-1}} \frac{1}{q^{\deg(f)/2}} \sum_{\substack{N \in \mathbb{F}_q[T] \\ \deg(N) \leq \frac{g}{2}+1 \\ (N,fh)=1}} \mu(N) G_{q^2}(fh^2,N) \frac{\Tilde{\Psi}_{q^2}(fh^2N,u)}{u^{g/2+1-\deg(N)}} \, \frac{du}{u}  \\ & \ll g q^{\frac{3g}{2}-(2-\sigma)X + 2\deg(h) ( \tfrac{3}{2}-\sigma)}, \end{align*}
as long as $\sigma \geq 7/6$. The second integral involving the sum over $f \in \mathcal{M}_{q, g-X}$ is similarly bounded.

Collecting the estimate above for $S_{1,\cube} + S_{1,\mathrm{dual}}$ with the proper error terms, and the estimate for 
$S_{1,\not= \cube}$ of Section \ref{non-cube}, we get
\begin{align*}
\sum_{\chi \in \mathcal{C}(g)}  \chi(h) {\textstyle L(\frac{1}{2}, \chi)}  =&  \frac{ q^{g+2} \zeta_q(3/2)}{\zeta_q(3)|C|_q \sqrt{|S|_q}} \mathcal{A}_{\mathrm{nK}} \left ( \frac{1}{q^2}, \frac{1}{q^{3/2}} \right )  \prod_{\substack{R \in \mathbb{F}_q[T] \\ \deg(R) \text{ even} \\ R|h}} M_R \Big( \frac{1}{q^2},\frac{1}{q^{3/2}} \Big) \\
& +O \Big(q^{\frac{X+g}{2}+\varepsilon g} + q^{\frac{3g}{2} - (2-\sigma) X + 2 \deg(h)( \frac{3}{2}-\sigma)} +q^{\frac{5g}{6}+\varepsilon g}+q^{g-\frac{X}{2}+\varepsilon g}\Big),
\end{align*}
where $7/6 \leq \sigma<4/3$. We pick $\sigma=7/6$ and $X= \frac{3g}{4} + \frac{\deg(h)}{2}$. Then the error term above becomes $O \Big(q^{\frac{7g}{8}+\frac{\deg(h)}{4} +\varepsilon g} \Big)$. Since $\deg(h) < \frac{g}{10}-\varepsilon g$, the main term above dominates the error term, and we have a genuine asymptotic formula. 
\end{proof}

\subsection{Proof of Theorem \ref{first-moment}}

Here we will finish the proof of Theorem \ref{first-moment}. From \eqref{mom1} and Proposition \ref{twist}, it follows that the main term in the mollified first moment is equal to
\begin{equation}\label{arghh}
\frac{q^{g+2} \zeta_q(3/2)}{\zeta_q(3)} \mathcal{A}_{\mathrm{nK}}\left(\frac{1}{q^{2}},\frac{1}{ q^{3/2}}\right) 
\prod_{r = 0}^\JJ  T(r),
\end{equation}

where
\begin{eqnarray*}
T(r) &=& \sum_{\substack{ P|h_r \Rightarrow P \in I_r\\ \Omega(h_r) \leq  \ell_r\\
h_r = C_r S_r^2 E_r^3\\ (C_r, S_r) =1, \; C_r, S_r \; \text{square-free}}} \frac{a(h_r; \JJ ) \lambda(h_r) \nu(h_r)}{ |C_r|_q^{3/2} |S_r|_q^{3/2} |E_r|_q^{3/2}} 
\prod_{\substack{R \in \F_q[T] \\ \deg{R} \;\text{even} \\ R \mid h_r}} M_R\left(\frac{1}{q^{2}}, \frac{1}{q^{3/2}} \right)\\
&\geq &
\sum_{\substack{P \mid h_r \Rightarrow P \in I_r\\h_r = C_r S_r^2 E_r^3\\ (C_r, S_r) =1, \; C_r, S_r \; \text{square-free}}}
\frac{a(h_r; \JJ ) \lambda(h_r) \nu(h_r)}{ {|C_r|_q^{3/2} |S_r|_q^{3/2} |E_r|_q^{3/2}}} 
\prod_{\substack{R \in \F_q[T] \\ \deg{R} \;\text{even} \\ R \mid h_r}} M_R\left(\frac{1}{q^{2}}, \frac{1}{q^{3/2}}\right) \\
&  - &\sum_{\substack{P \mid h_r \Rightarrow P \in I_r\\h_r = C_r S_r^2 E_r^3\\ (C_r, S_r) =1, \; C_r, S_r \; \text{square-free} }}
\frac{2^{\Omega(h_r)}}{ 2^{\ell_r}{|C_r|_q^{3/2} |S_r|_q^{3/2} |E_r|_q^{3/2}}} .
\end{eqnarray*}
\kommentar{\begin{eqnarray*}
T(r) &=& \sum_{\substack{ P|h_r \Rightarrow P \in I_r\\ \Omega(h_r) \leq  \ell_r\\
h_r = C_r S_r^2 E_r^3\\ (C_r, S_r) =1, \; C_r, S_r \; \text{square-free}}} \frac{a(h_r; \JJ ) \lambda(h_r) \nu(h_r)}{ |C_r|_q^{3/2} |S_r|_q^{3/2} |E_r|_q^{3/2}} 
\prod_{\substack{R \in \F_q[T] \\ \deg{R} \;\text{even} \\ R \mid h_r}} M_R\left(\frac{1}{q^{2}}, \frac{1}{q^{3/2}} \right)\\
&\geq &
\sum_{\substack{P \mid h_r \Rightarrow P \in I_r\\h_r = C_r S_r^2 E_r^3\\ (C_r, S_r) =1, \; C_r, S_r \; \text{square-free}}}
\frac{a(h_r; \JJ ) \lambda(h_r) \nu(h_r)}{ {|C_r|_q^{3/2} |S_r|_q^{3/2} |E_r|_q^{3/2}}} 
\prod_{\substack{R \in \F_q[T] \\ \deg{R} \;\text{even} \\ R \mid h_r}} M_R\left(\frac{1}{q^{2}}, \frac{1}{q^{3/2}}\right) \\
&&-  \frac{1}{2^{\ell_r}} \sum_{P \mid h_r \Rightarrow P \in I_r} \frac{ 2^{\Omega(h_r)}}{|h_r|^{3/2}},
\end{eqnarray*}}
where in the second line we have added the $h_r$ with $\Omega(h_r) \geq \ell_r$ to the main sum, 
and we have also used the fact that $2^{\ell_r}\leq 2^{\Omega(h_r)}$ and the bound $\nu(h_r)\leq 1$.
Now we have that
\begin{align*}
 \frac{1}{2^{\ell_r}} & \sum_{\substack{P \mid h_r \Rightarrow P \in I_r\\h_r = C_r S_r^2 E_r^3\\ (C_r, S_r) =1, \; C_r, S_r \; \text{square-free} }}
\frac{2^{\Omega(h_r)}}{ {|C_r|_q^{3/2} |S_r|_q^{3/2} |E_r|_q^{3/2}}} \leq \frac{1}{2^{\ell_r}} \sum_{P|C_r \Rightarrow P \in I_r} \frac{2^{\Omega(C_r)}}{|C_r|_q^{3/2}}  \sum_{P|S_r \Rightarrow P \in I_r} \frac{4^{\Omega(S_r)}}{|S_r|_q^{3/2}}   \\
& \times \sum_{P|E_r \Rightarrow P \in I_r} \frac{8^{\Omega(E_r)}}{|E_r|_q^{3/2}} = \frac{1}{2^{\ell_r}} \prod_{P \in I_r} \Big(1- \frac{2}{|P|_q^{3/2}} \Big)^{-1}\Big(1- \frac{4}{|P|_q^{3/2}} \Big)^{-1} \Big(1- \frac{8}{|P|_q^{3/2}} \Big)^{-1},
\end{align*}
so combining the two equations above, we get that
\begin{align*}
T(r) & \geq \sum_{\substack{P \mid h_r \Rightarrow P \in I_r\\h_r = C_r S_r^2 E_r^3\\ (C_r, S_r) =1, \; C_r, S_r \; \text{square-free}}}
\frac{a(h_r; \JJ ) \lambda(h_r) \nu(h_r)}{ {|C_r|_q^{3/2} |S_r|_q^{3/2} |E_r|_q^{3/2}}} 
\prod_{\substack{R \in \F_q[T] \\ \deg{R} \;\text{even} \\ R \mid h_r}} M_R\left(\frac{1}{q^{2}}, \frac{1}{q^{3/2}}\right) \\
& - \frac{1}{2^{\ell_r}} \prod_{P \in I_r} \Big(1- \frac{2}{|P|_q^{3/2}} \Big)^{-1}\Big(1- \frac{4}{|P|_q^{3/2}} \Big)^{-1} \Big(1- \frac{8}{|P|_q^{3/2}} \Big)^{-1}.
\end{align*}

\kommentar{
\acom{Good point, I think this needs to be fixed. We have
\begin{align*}
 & \sum_{\substack{P \mid h_r \Rightarrow P \in I_r\\h_r = C_r S_r^2 E_r^3\\ (C_r, S_r) =1, \; C_r, S_r \; \text{square-free} \\ \Omega(h_r) >\ell_r}}
\frac{a(h_r; \JJ ) \lambda(h_r) \nu(h_r)}{ {|C_r|_q^{3/2} |S_r|_q^{3/2} |E_r|_q^{3/2}}} 
\prod_{\substack{R \in \F_q[T] \\ \deg{R} \;\text{even} \\ R \mid h_r}} M_R\left(\frac{1}{q^{2}}, \frac{1}{q^{3/2}}\right)\\
& \leq \sum_{\substack{P \mid h_r \Rightarrow P \in I_r\\h_r = C_r S_r^2 E_r^3\\ (C_r, S_r) =1, \; C_r, S_r \; \text{square-free} }}
\frac{2^{\Omega(h_r)}}{ 2^{\ell_r}{|C_r|_q^{3/2} |S_r|_q^{3/2} |E_r|_q^{3/2}}} \leq \frac{1}{2^{\ell_r}} \sum_{P|C_r \Rightarrow P \in I_r} \frac{2^{\Omega(C_r)}}{|C_r|_q^{3/2}}  \sum_{P|S_r \Rightarrow P \in I_r} \frac{4^{\Omega(S_r)}}{|S_r|_q^{3/2}} \\
& \times \sum_{P|E_r \Rightarrow P \in I_r} \frac{8^{\Omega(E_r)}}{|E_r|_q^{3/2}} = \frac{1}{2^{\ell_r}} \prod_{P \in I_r} \Big(1- \frac{2}{|P|_q^{3/2}} \Big)^{-1}\Big(1- \frac{4}{|P|_q^{3/2}} \Big)^{-1} \Big(1- \frac{8}{|P|_q^{3/2}} \Big)^{-1}.
\end{align*}
Then what follows needs to be changed a bit as well. Let me know if you agree.
}\mcom{I agree! Thanks to the two of you for taking care of this. I think this won't change the final bound, but some details in the middle} 
\ccom{Good for me, but some $\Omega(h_r)$ have to be  $\Omega(C_r)$, $\Omega(S_r)$ and $\Omega(E_r)$}
\mcom{No, that's why we have $\leq$} \acom{Oh, sorry, fixed the typos. Yes, we needed $\Omega(C_r)$ etc.}}

Let $U(r)$ denote the first term above. Then 
\begin{align}\label{arghh2}
\prod_{r =0}^{\JJ} T(r) \geq \prod_{r=0}^{\JJ} U(r) \prod_{r=0}^{\JJ} \left(1- \frac{1}{2^{\ell_r} U(r) \prod_{P \in I_r} \Big(1- \frac{2}{|P|_q^{3/2}} \Big)\Big(1- \frac{4}{|P|_q^{3/2}} \Big) \Big(1- \frac{8}{|P|_q^{3/2}} \Big) }\right).
\end{align}

We first focus on 
\begin{align}
\mathcal{U}:=\prod_{r=0}^{\JJ} U(r)=& \prod_{r=0}^\JJ  
\sum_{\substack{P \mid h_r \Rightarrow P \in I_r\\h_r = C_r S_r^2 E_r^3\\ (C_r, S_r) =1, \; C_r, S_r \; \text{square-free}}}
\frac{a(h_r; \JJ ) \lambda(h_r) \nu(h_r)}{ {|C_r|_q^{3/2} |S_r|_q^{3/2} |E_r|_q^{3/2}}} 
\prod_{\substack{R \in \F_q[T] \\ \deg{R} \;\text{even} \\ R \mid h_r}} M_R\left(\frac{1}{q^{2}}, \frac{1}{q^{3/2}}\right) \nonumber \\
\label{main_moll-before}
= &\prod_{r=0}^\JJ  \prod_{P\in I_r}
\left[1+\sum_{e=0}^\infty \frac{a(P;\JJ )^{3e+1}(-1)^{3e+1}}{|P|_q^{3(e+1)/2}(3e+3)!}\right.\\ \nonumber
&\times \left.\left(a(P;\JJ )^2+3(e+1)(-a(P;\JJ )+3e+2)\right)N_P\left(\frac{1}{q^{2}}, \frac{1}{q^{3/2}}\right)\right]
\end{align} 
where $N_P\left(\frac{1}{q^{2}}, \frac{1}{q^{3/2}}\right)=M_P\left(\frac{1}{q^{2}}, \frac{1}{q^{3/2}}\right)$ or 1, according to whether $\deg(P)$ is even or odd. Thus
\begin{align}
\mathcal{U} &= \prod_{\deg(P) \leq (g+2) \theta_{\JJ}}
\left[1+\left[\frac{1}{3}\left(1+\frac{1}{|P|_q^{1/2}}+\frac{1}{|P|_q}\right)\exp\left( -\frac{a(P;\JJ )}{|P|_q^{1/2}}\right)\right.\right. \nonumber \\
&\left.+\frac{1}{3}\left(1+\frac{\xi_3}{|P|_q^{1/2}}+\frac{\xi_3^2}{|P|_q}\right) \exp\left( -\frac{\xi_3a(P;\JJ )}{|P|_q^{1/2}}\right)+\frac{1}{3}\left(1+\frac{\xi_3^2}{|P|_q^{1/2}}+\frac{\xi_3}{|P|_q}\right) \exp\left( -\frac{\xi_3^2a(P;\JJ )}{|P|_q^{1/2}}\right)-1\right] \nonumber \\
&\times \left. N_P\left(\frac{1}{q^{2}}, \frac{1}{q^{3/2}}\right)\right]. \label{main_moll}
\end{align}

For the second product of \eqref{arghh2}, we have
\begin{align*}
&\prod_{r=0}^{\JJ} \left(1- \frac{1}{2^{\ell_r} U(r) \prod_{P \in I_r} \Big(1- \frac{2}{|P|_q^{3/2}} \Big)\Big(1- \frac{4}{|P|_q^{3/2}} \Big) \Big(1- \frac{8}{|P|_q^{3/2}} \Big) }\right)\\ 
\geq & \left(1- \frac{1}{2^{\ell_0}} \exp \Big( \sum_{n=1}^{\infty} \frac{q^n}{n} \Big(\frac{1}{q^{3n/2}-1}+\frac{2}{q^{3n/2}-2}+\frac{4}{q^{3n/2}-4}+\frac{8}{q^{3n/2}-8} \Big)  \Big) \right) \\
\times &  \prod_{r=1}^{\JJ} \left( 1- \frac{1}{2^{\ell_r} } \exp\Big( \sum_{n=(g+2) \theta_{r-1}}^{(g+2) \theta_r} \frac{15}{nq^{n/2}} +O\Big(\frac{1}{q^{2g \theta_{r-1}}} \Big) \Big)\right)  \\
 \geq & \Big(1-\frac{1}{2^{\ell_0}} K \Big)\prod_{r=1}^{\JJ} \Big(1-\frac{1}{2^{\ell_r}} +O \Big(\frac{1}{2^{\ell_r} q^{g \theta_{r-1}/2}} \Big) \Big) \\
 \geq &1-\frac{1}{e^{e^{84}}},
\end{align*}
where in the second line we used the inequality form of the Prime Polynomial Theorem \eqref{ppt-bound}, 
$K = \exp \Big( \sum_{n=1}^{\infty} \frac{q^n}{n} \Big(\frac{1}{q^{3n/2}-1}+\frac{2}{q^{3n/2}-2}+\frac{4}{q^{3n/2}-4}+\frac{8}{q^{3n/2}-8} \Big)  \Big),$ and the estimate in the last line is taken with the constants chosen in Section \ref{explicit-UB}.

\kommentar{\begin{align*}
\prod_{r=0}^{\JJ}  \left(1- \frac{1}{2^{\ell_r} U(r) \prod_{P \in I_r} \Big(1-\frac{2}{|P|^{3/2}}\Big) }\right) \geq& \prod_{r=0}^{\JJ} \left( 1- \frac{1}{2^{\ell_r} } \exp\Big( \sum_{n=(g+2) \theta_{r-1}}^{(g+2) \theta_r} \frac{3}{nq^{n/2}} \Big)\right) \\
 \geq &\prod_{r=0}^{\JJ} \Big(1-\frac{1}{2^{\ell_r}} +O \Big(\frac{1}{2^{\ell_r} q^{g \theta_{r-1}/2}} \Big) \Big) \\
 \geq &1-\frac{1}{e^{e^{84}}},
\end{align*}
}



\kommentar{\mcom{
\begin{align*}
&\prod_{r=0}^{\JJ} \left(1- \frac{1}{2^{\ell_r} U(r) \prod_{P \in I_r} \Big(1- \frac{2}{|P|_q^{3/2}} \Big)\Big(1- \frac{4}{|P|_q^{3/2}} \Big) \Big(1- \frac{8}{|P|_q^{3/2}} \Big) }\right)\\ 
\geq &\prod_{r=0}^{\JJ} \left( 1- \frac{1}{2^{\ell_r} } \exp\Big( \sum_{n=(g+2) \theta_{r-1}}^{(g+2) \theta_r} \frac{15}{nq^{n/2}} \Big)\right) \\
 \geq &\prod_{r=0}^{\JJ} \Big(1-\frac{1}{2^{\ell_r}} +O \Big(\frac{1}{2^{\ell_r} q^{g \theta_{r-1}/2}} \Big) \Big) \\
 \geq &1-\frac{1}{e^{e^{84}}},
\end{align*}

}

\acom{Agreed}
}
Putting together all this information, we obtain that 
\begin{align}
\frac{q^{g+2} \zeta_q(3/2)}{\zeta_q(3)} \mathcal{A}_{\mathrm{nK}}\left(\frac{1}{q^{2}},\frac{1}{ q^{3/2}}\right) 
\prod_{r = 0}^\JJ  T(r) \geq \left(1-\frac{1}{e^{e^{84}}}\right) \frac{ q^{g+2} \zeta_q(3/2)}{\zeta_q(3)} \mathcal{A}_{\mathrm{nK}} \left ( \frac{1}{q^2}, \frac{1}{q^{3/2}} \right ) \mathcal{U}.
\label{mtmol}
\end{align}

\kommentar{\acom{Here I'm sure we can give a better lower bound, but I'm not sure if it's worth trying too much. I'm only choosing $5/8$ so that at the end the constant for the lower bound is $1$.
}\mcom{I've fed the optimization numbers to a computer and we get a lower bound of $1-\frac{2}{2^{\ell_J}}$ with $\ell_J=8186478524193534844355018513046684857$,}
\acom{Wow, thanks, Matilde! This is much closer to $1$ than $5/8$ :) I'm not sure how we want to write our constant though.}}

\kommentar{\acom{Note that I commented out some of the computations in the expression of the constant, to make it more concise. I hope that's ok. Also, we use $A$ in other sections as well. Do we want to rename the constant here or there?} \mcom{I''ve changed to $\mathcal{U}$}}
\kommentar{\acom{Now we would get that the main term is 
\begin{align*}
\geq \frac{5}{8} \frac{ q^{g+2} \zeta_q(3/2)}{\zeta_q(3)} \mathcal{A}_{\mathrm{nK}} \left ( \frac{1}{q^2}, \frac{1}{q^{3/2}} \right ) \mathcal{U}.
\end{align*}
Let me know if you agree.
} \mcom{I agree.}}

Finally, summing the error term coming from Proposition \ref{twist} gives
\begin{align} \label{et_moll}
&q^{\frac{7g}{8}+\varepsilon g}\sum_{\deg(h) \leq w(g+2)} \frac{|h|^{1/4}}{|h|^{1/2}}\sim q^{\frac{7g}{8}+\varepsilon g+\frac{3w(g+2)}{4}} ,
\end{align}
where $w=\sum_{j=0}^\JJ  \theta_j \ell_j$. Note that because of equation \eqref{important_condition}, we have that
$$\sum_{j=0}^{\JJ} \theta_j \ell_j \leq \frac{1}{20},$$ so the above constitutes an error term. This finishes the proof of Theorem \ref{first-moment}.

\begin{proof}[Proof of Corollary \ref{lb}]
Note that from expression \eqref{main_moll-before}, we can write 
\begin{align*}
\mathcal{U} &\geq  \prod_{\deg(P) \leq (g+2) \theta_{\JJ}} \Big( 1- \frac{a(P;\JJ)}{6|P|^{3/2}} ( a(P;\JJ)^2-3a(P;\JJ)+6) \Big) \\
&\geq \prod_{\deg(P) \leq (g+2) \theta_{\JJ}}\Big( 1- \frac{1}{|P|^{3/2}}\Big)\\
&\geq \zeta_q(3/2)^{-1}.
\end{align*}
We also have
 \[\mathcal{A}_{\mathrm{nK}} \left ( \frac{1}{q^2}, \frac{1}{q^{3/2}} \right )=
 \prod_{\substack{R \in \F_q[T]\\\deg(R) \, \mathrm{odd}}}\frac{1}{1+\frac{1}{|R|^2}}\prod_{\substack{R \in \F_q[T]\\\deg(R) \, \mathrm{even}}}\frac{1+\frac{2}{|R|}(1-\frac{1}{|R|^{3/2}})}{(1+\frac{1}{|R|})^2}.\]
For the factors involving $R$ of even degree, we have that 
$$ 1-\frac{1}{(|R|+1)^2}- \frac{2}{|R|^{1/2}(|R|+1)^2}>\Big(1-\frac{1}{|R|^2} \Big)^2$$ and this leads to  
$$ \mathcal{A}_{\mathrm{nK}} \left ( \frac{1}{q^2}, \frac{1}{q^{3/2}} \right ) \geq \zeta_q(2)^{-2}.$$
Combining everything, the main term of the mollified moment in \eqref{mtmol}  satisfies
\[\geq \left(1-\frac{1}{e^{e^{84}}}\right)  \frac{q^{g+2}}{\zeta_q(2)^2\zeta_q(3)}\geq0.6143 q^{g+2},\]
where we have bounded by the worst case $q = 5$. 
\end{proof}

\kommentar{
\bigskip
************** OLDER STUFF  AND COMMENTS:\\
where $\gamma(P)= \frac{a(P;\JJ)}{6} ( a(P;\JJ)^2-3a(P;\JJ)+6)$. Now from the  fact that $\gamma(P)<1$, it follows that
\begin{align*}
\exp & \Big(- \sum_{\deg(P) \leq (g+2)\theta_{\JJ}} \frac{\gamma(P)}{|P|^{3/2}}  \Big) \geq \exp \Big( - \sum_{P } \frac{1}{|P|^{3/2}} \Big)> \exp \Big( \sum_P \log \Big( 1- \frac{1}{|P|^{3/2}} \Big) \Big)\\
&= \prod_P \Big(1- \frac{1}{|P|^{3/2}}\Big) = \zeta_q(3/2)^{-1},
\end{align*} where we're using the fact that for $0<x<1$ we have $x< - \log(1-x)$.

\mcom{Old comment:}\acom{Sorry to come back to this, but one thing I'm still bothered by is the fact that when we use the Prime Polynomial Theorem, the error term coming from $O(q^{n/2}/n)$ is not really an error term, because we're summing over $P$ starting with small degree, so the error term coming from here is also of constant size like the main term. So to avoid using the Prime Polynomial Theorem, I'd still suggest writing this as:
\begin{align*}
\exp & \Big(- \sum_{\deg(P) \leq (g+2)\theta_{\JJ}} \frac{\gamma(P)}{|P|^{3/2}}  \Big) \geq \exp \Big( - \sum_{P } \frac{1}{|P|^{3/2}} \Big)> \exp \Big( \sum_P \log \Big( 1- \frac{1}{|P|^{3/2}} \Big) \Big)\\
&= \prod_P \Big(1- \frac{1}{|P|^{3/2}}\Big) = \zeta_q(3/2)^{-1},
\end{align*} where we're using the fact that for $0<x<1$ we have $x< - \log(1-x)$.
}


For the other terms, we have
\begin{align*}
\mathcal{A}_{\mathrm{nK}} \left ( \frac{1}{q^2}, \frac{1}{q^{3/2}} \right )=&\prod_{\substack{R \in \F_q[T]\\\deg(R) \, \mathrm{odd}}} \left(1-\frac{1}{|R|^2}+O\Big(\frac{1}{|R|^4}\Big) \right)\prod_{\substack{R \in \F_q[T]\\\deg(R) \, \mathrm{even}}} \left( 1-\frac{1}{|R|^2}  +O\Big(\frac{1}{|R|^{5/2}}\Big)\right)\\\sim & \zeta_q(2)^{-1}.
\end{align*}
 \acom{Here I would make the same comment as above, that I'm not sure we can really say that $\mathcal{A} \sim \zeta_q(2)^{-1}$ because of the presence of small primes. We definitely have that $\mathcal{A}_{\mathrm{nK}} \left ( \frac{1}{q^2}, \frac{1}{q^{3/2}} \right ) \geq \zeta_q(2)^{-2}.$ And I'm wondering if there are other places where we need to be a bit careful with this. Another example I think is the expression for $\mathcal{U}$ at the beginning of the proof of Corollary $1.4$. Let me know what you think.}
\ccom{Or we can use the explicit upper bound. We have that
$$| \pi(n) - \frac{q^n}{n} | \leq \frac{q^{n/2}}{n} + q^{n/3} \iff \pi(n) \leq c \frac{q^n}{n},$$
for all $n$ for some explicit $c$. Would that be good?}
\acom{I'm not sure how it helps for the expression of $\mathcal{A}$. I can see how to use it for the comment on $\zeta_q(3/2)^{-1}$ but using it there would give a weaker lower bound.}

\mcom{Ok, but in the case of $\mathcal{U}$, can't you just say that 
\begin{align*}
\mathcal{U} &\geq \prod_{\deg(P) \leq (g+2) \theta_{\JJ}} \Big( 1- \frac{a(P;\JJ)}{6|P|^{3/2}} ( a(P;\JJ)^2-3a(P;\JJ)+6) \Big) \\
&\geq \prod_{\deg(P) \leq (g+2) \theta_{\JJ}} \Big( 1- \frac{2}{3|P|^{3/2}}  \Big)\\
&\mbox{corrected:} \geq \prod_{\deg(P) \leq (g+2) \theta_{\JJ}} \Big( 1- \frac{1}{|P|^{3/2}}  \Big)\\
\end{align*}
and go from there?
\acom{I agree with the first line. In the second line, why do you have $2/3$? I just get $1$, isn't that just $\gamma(P)$?}
{\color{purple} Another Matilde: I was thinking that I need to maximize $ \frac{a(P;\JJ)}{6|P|^{3/2}} ( a(P;\JJ)^2-3a(P;\JJ)+6)$, and for that I use for the numerator that $a(P;\JJ)\leq 1$ and for the quadratic $x^2-3x+6$ for $x\leq 1$ but near 1, the maximum is with $x=1$ but now I realize that I'm wrong because the derivative at $x=1$ is negative, so I agree that we can only assume $a(P,\JJ)\geq0$ and i get $1$ like you. Thanks!}

For $\mathcal{A}_{\mathrm{nK}} \left ( \frac{1}{q^2}, \frac{1}{q^{3/2}} \right )$
\begin{align*}
 \mathcal{A}_{\mathrm{nK}} \left ( \frac{1}{q^2}, \frac{1}{q^{3/2}} \right )=&
 \prod_{\substack{R \in \F_q[T]\\\deg(R) \, \mathrm{odd}}}\frac{1}{1+\frac{1}{|R|^2}}\prod_{\substack{R \in \F_q[T]\\\deg(R) \, \mathrm{even}}}\frac{1+\frac{2}{|R|}(1-\frac{1}{|R|^{3/2}})}{(1+\frac{1}{|R|})^2}\\
 \geq &\prod_{\substack{R \in \F_q[T]\\\deg(R) \, \mathrm{odd}}}\left(1-\frac{1}{|R|^2}\right)\prod_{\substack{R \in \F_q[T]\\\deg(R) \, \mathrm{even}}}\frac{1+\frac{2}{|R|}(1-\frac{1}{|R|^{3/2}})}{(1+\frac{2}{|R|})}\\
 \geq &\prod_{\substack{R \in \F_q[T]\\\deg(R) \, \mathrm{odd}}}\left(1-\frac{1}{|R|^2}\right)\prod_{\substack{R \in \F_q[T]\\\deg(R) \, \mathrm{even}}}1-\frac{2}{|R|^{5/2}(1+\frac{2}{|R|})}\\
 \geq &\prod_{\substack{R \in \F_q[T]\\\deg(R) \, \mathrm{odd}}}\left(1-\frac{1}{|R|^2}\right)\prod_{\substack{R \in \F_q[T]\\\deg(R) \, \mathrm{even}}}\left(1-\frac{1}{|R|^{3/2}}\right)
\end{align*}
and go from there?}
\acom{I agree, the above would give the lower bound $\zeta_q(3/2)^{-1}$, but I think we can do slightly better. Note that the factor over $\deg(R)$ even is equal to
$$ 1-\frac{1}{(|P|+1)^2}- \frac{2}{\sqrt{|P|}(|P|+1)^2}>\Big(1-\frac{1}{|P|^2} \Big)^2,$$ so overall we'd get that
$$ \mathcal{A}_{\mathrm{nK}} \left ( \frac{1}{q^2}, \frac{1}{q^{3/2}} \right ) \geq \zeta_q(2)^{-2}.$$
} \mcom{I agree}
}

\kommentar{ and we need that 
\[w<\frac{1}{6}.\]
\mcom{And we can always do that because equation (4) of first paper}
\acom{Also from the error term in the dual, we need $\deg(h)<g/10-\varepsilon g$.}}
\kommentar{\ccom{
I am trying the computation of the main term, I think I get the same thing as Matilde, provided we deal with the ET. The main term is
\begin{eqnarray*}
\frac{q^{g+2} \zeta_q(3/2)}{\zeta_q(3)} \mathcal{A}_{nK}(q^{-2}, q^{-3/2}) 
\prod_{r = 1}^\JJ  E(r),
\end{eqnarray*}
where
\begin{eqnarray*}
E(r) &=& \sum_{\substack{P \mid h_r \Rightarrow P \in I_r\\h_r = C_r S_r^2 E_r^3\\ (C_r, S_r) =1, \; C_r, S_r \; \text{square-free}}}
\frac{a(h_r; \JJ ) \lambda(h_r) \nu(h_r)}{\kappa^{\Omega(h_r)} {C_r^{3/2} S_r^{3/2} E_r^{3/2}}} 
\prod_{\substack{R \in \F_q[T] \\ \deg{R} \;\text{even} \\ R \mid h_r}} M_R(q^{-2}, q^{-3/2}) + O \left( \frac{1}{2^{\ell_r}} \sum_{P \mid h_r \Rightarrow P \in I_r} \frac{1}{h_r^{3/2}} \right)
\end{eqnarray*}
where the error term comes from the $h_r$ where $\Omega(h_r) \geq \ell_r$.
This writes as
\begin{eqnarray*}
E(r) &=& \sum_{\substack{P \mid h_r \Rightarrow P \in I_r}}
\frac{a(h_r; \JJ ) \lambda(h_r) \nu(h_r)}{\kappa^{\Omega(h_r)} h_r^{3/2}}
\prod_{\substack{R \in \F_q[T] \\ \deg{R} \;\text{even} \\ R \mid h_r}} M_R(q^{-2}, q^{-3/2}) + ET,
\end{eqnarray*}
and we can ignore the $C_r, S_r, E_r$. It seems fishy though...\\

But it makes sense that this is way simpler in that case: we dont need to catch any cancellation from the mollifier as the first moment is of the correct size. Forget about my previous comments I was confused.
}

\mcom{So, for the main term we want 
\begin{align*}
\sum_{\chi \in \mathcal{C}(g)} {\textstyle L(\frac{1}{2}, \chi)} M(\chi) =&
 \sum_{\chi \in \mathcal{C}(g)} {\textstyle L(\frac{1}{2}, \chi)} \prod_{j=0}^\JJ  
 \sum_{\substack{P |h \Rightarrow P \in I_j \\ \Omega(h) \leq \ell_j}} \frac{a(h)  \chi(h)  \lambda(h)\nu(h)}{2^{\Omega(h)}\sqrt{|h|_q}}\\
 \sim&\prod_{j=0}^\JJ  
 \sum_{P |h \Rightarrow P \in I_j} \frac{a(h)  \lambda(h)\nu(h)}{2^{\Omega(h)}\sqrt{|h|_q}}\sum_{\chi \in \mathcal{C}(g)}\chi(h) {\textstyle L(\frac{1}{2}, \chi)} \\
 \sim & \frac{ q^{g+2} \zeta_q(3/2)}{\zeta_q(3)} \mathcal{A}_{\mathrm{nK}} \left ( \frac{1}{q^2}, \frac{1}{q^{3/2}} \right )  \prod_{j=0}^\JJ  
 \sum_{P |h \Rightarrow P \in I_j} \frac{a(h)  \lambda(h)\nu(h)}{2^{\Omega(h)}|C|_q \sqrt{|D|_q}\sqrt{|h|_q}} \prod_{\substack{R \in \mathbb{F}_q[T] \\ \deg(R) \text{ odd} \\ R|h}} M_R \Big( \frac{1}{q^2},\frac{1}{q^{3/2}} \Big)
\end{align*} 

Notice that $|C|_q \sqrt{|D|_q}\sqrt{|h|_q}=|CDE|_q^{3/2}$. Let $I=\bigcup I_j$. Let
\[M(h)=\frac{a(h)  \lambda(h)\nu(h)}{2^{\Omega(h)}}.\]
(A totally multiplicative function.)
\begin{align*}
  &\prod_{j=0}^\JJ  
 \sum_{P |h \Rightarrow P \in I_j} \frac{a(h)  \lambda(h)\nu(h)}{2^{\Omega(h)}|C|_q \sqrt{|D|_q}\sqrt{|h|_q}} \prod_{\substack{R \in \mathbb{F}_q[T] \\ \deg(R) \text{ odd} \\ R|h}} M_R \Big( \frac{1}{q^2},\frac{1}{q^{3/2}} \Big)\\
 =& \prod_{\substack{P\in I\\\deg(P) \text{ even}}} \left( 1+\frac{M(P)+M(P)^2+M(P)^3}{|P|_q^{3/2}-M(P)^3} \right)\\
 & \times  \prod_{\substack{P\in I\\\deg(P) \text{ odd}}} \left( 1+\frac{M(P)+M(P)^2+M(P)^3}{|P|_q^{3/2}-M(P)^3} \right) \frac{|P|_q^{5/2}}{2|P|_q^{3/2}+|P|_q-2}\\
\end{align*}
}
}

 \bibliographystyle{amsalpha}

\bibliography{Bibliography}

\end{document}